\documentclass{article}
\usepackage{amssymb}
\usepackage{amsmath}

\setcounter{MaxMatrixCols}{10}

 \newcommand{\Iff}{\ \Longleftrightarrow\
} \newcommand{\Implies}{\ \Rightarrow\ }
\newcommand{\ov}{\overrightarrow} \newcommand{\ovd}{\ov{d}}
 \newcommand{\ovb}{\ov{b}}
\newcommand{\ovc}{\ov{c}} \newcommand{\ovg}{\ov{g}}
 \newcommand{\ovx}{\ov{x}}
 \newcommand{\A}{{\mathcal A}}
\newcommand{\B}{{\mathcal B}} \newcommand{\C}{{\mathcal C}}
\newcommand{\D}{{\mathcal D}} \newcommand{\F}{{\mathcal F}}
\newcommand{\G}{{\mathcal G}} \newcommand{\HH}{{\mathcal H}}
\newcommand{\la}{\langle} \newcommand{\ra}{\rangle}
  \newtheorem{theorem}{Theorem}[section]
\newtheorem{defn}[theorem]{Definition}
\newtheorem{lemma}[theorem]{Lemma} 
\newtheorem{cor}[theorem]{Corollary} 

\newtheorem{problem}[theorem]{Problem}
 \newenvironment{proof}
{\pagebreak[1]{\narrower\noindent {\bf
Proof:\quad\nopagebreak}}}{$\hfill\square$}

\begin{document}

\author{\textbf{Wesley Calvert} \\
Department of Mathematics \& Statistics \\
Murray State University\\
Murray, Kentucky 42071\\
wesley.calvert@murraystate.edu \and \textbf{Douglas Cenzer} \\
Department of Mathematics \\
University of Florida\\
Gainesville, FL 32611\\
cenzer@math.ufl.edu \and \textbf{Valentina S.\ Harizanov} \\
Department of Mathematics \\
George Washington University\\
Washington, DC 20052\\
harizanv@gwu.edu \and \textbf{Andrei Morozov} \\
Sobolev Institute of Mathematics\\ Novosibirsk, 630090, Russia\\
morozov@math.nsc.ru \thanks{
Calvert was partially supported by the NSF grants DMS-9970452,
DMS-0139626, and DMS-0353748, Harizanov by the NSF grants DMS-0502499
and DMS-0704256, and the last three authors by the NSF binational
grant DMS-0554841.}}

\title{Effective Categoricity of
Abelian $p$-Groups\\ } \maketitle

\begin{abstract}
We investigate effective categoricity of computable Abelian $p$-groups
$\mathcal{A}$. We prove that all computably categorical Abelian
$p$-groups are relatively computably categorical, that is, have
computably enumerable Scott families of existential formulas. We
investigate which computable Abelian $p$-groups are $\Delta _{2}^{0}$
categorical and relatively $\Delta^0_2$ categorical. 
\end{abstract}

\section{Introduction and Preliminaries}

In computable model theory we are interested in effective versions of
model theoretic notions and constructions. We consider in particular
computability theoretic bounds on the complexity of isomorphisms of
structures within the same isomorphism type. This paper is a sequel to
\cite{C-C-H-M} where we studied equivalence structures. Here we will
investigate computable Abelian groups. We consider only countable
structures for computable languages, and for infinite structures we
may assume that their universe is $\omega $. We identify sentences
with their G\"{o}del codes. The \emph{atomic} \emph{diagram} of a
structure $\mathcal{A}$ for $L$ is the set of all quantifier-free
sentences in $L_{A}$, $L$ expanded by constants for the elements in
$A$, which are true in $\mathcal{A}$. A structure is
\emph{computable} if its atomic diagram is computable. In other words,
a structure $\mathcal{A}$ is computable if there is an algorithm that
determines for every quantifier-free formula $\theta (x_{0},\ldots
,x_{n-1})$ and every sequence $(a_{0},\ldots ,a_{n-1})\in A^{n} $,
whether $\mathcal{A}\vDash \theta (a_{0},\ldots ,a_{n-1})$. 

A computable structure $\mathcal{A}$ is \emph{computably categorical}
if for every computable isomorphic copy $\mathcal{B}$ of
$\mathcal{A}$, there is a \emph{computable} isomorphism from
$\mathcal{A}$ onto $\mathcal{B}$. For example, the ordered set of
rational numbers is computably categorical, while the ordered set of
natural numbers is not.  Goncharov and Dzgoev \cite{GD}, and
Remmel \cite{R} proved that a computable linear ordering
is computably categorical if and only if it has only finitely many
successors. Goncharov and Dzgoev \cite{GD}, and Remmel
\cite{RBA} established that a computable Boolean algebra is computably
categorical if and only if it has finitely many atoms (see also
LaRoche \cite{L}). Miller \cite{Miller} proved that no computable tree
of height $\omega $ is computably categorical. Lempp, McCoy, Miller,
and Solomon \cite{L-M-M-S} characterized computable trees of finite
height that are computably categorical. Nurtazin \cite{N}, and
Metakides and Nerode \cite{MN} established that a computable
algebraically closed field of finite transcendence degree over its
prime field is computably categorical.  In the recent paper \cite{C-C-H-M}, the authors showed that a
computable equivalence structure $\A$ is computably categorical if and
only if $\A$ has at most finitely many finite equivalence classes, or
$\A$ has only finitely many infinite classes and there is a finite
bound on the size of the finite classes and there is at most one
finite $k$ such that $\A$ has infinitely many classes of size $k$.

The present paper will be concerned with the categoricity of Abelian
$p$-groups Goncharov \cite{G1} and Smith
\cite{Smi81} characterized computably categorical Abelian $p$-groups as
those that can be written in one of the following forms:
$(\mathbb{Z}(p^{\infty }))^{l}\oplus \mathcal{G}$ for $l\in \omega
\cup \{\infty \}$ and $\mathcal{F}$ is finite, or
$(\mathbb{Z}(p^{\infty }))^{n}\oplus \mathcal{F}\oplus
(\mathbb{Z}(p^{k}))^{\infty }$, where $n,k\in \omega $ and
$\mathcal{F}$ is finite. Goncharov, Lempp, and Solomon \cite{G-L-S}
proved that a computable, ordered, Abelian group is computably
categorical if and only if it has finite rank. Similarly, they showed
that a computable, ordered, Archimedean group is computably
categorical if and only if it has finite rank.

In \cite{C-C-H-M}, we characterized the relatively $\Delta _{2}^{0}$ categorical equivalence
structures as those with either finitely many infinite equivalence
classes, or with an upper bound on the size of the finite equivalence
classes. We also consider the complexity of isomorphisms for
structures $\mathcal{A}$ and $\mathcal{B}$ such that both
$Fin^{\mathcal{A}}$ and $Fin^{\mathcal{B}}$ are computable, or $\Delta
_{2}^{0}$. Finally, we show that every computable equivalence
structure is relatively $\Delta _{3}^{0}$ categorical.

For any computable ordinal $\alpha $, we say that a computable
structure $\mathcal{A}$ is $\Delta _{\alpha }^{0}$\emph{\ categorical}
if for every computable structure $\mathcal{B}$ isomorphic to
$\mathcal{A}$, there is a $\Delta _{\alpha }^{0}$ isomorphism form
$\mathcal{A}$ onto $\mathcal{B}$.  Lempp, McCoy, Miller, and Solomon
\cite{L-M-M-S} proved that for every $n\geq 1$, there is a computable
tree of finite height, which is $\Delta _{n+1}^{0}$ categorical but
not $\Delta _{n}^{0}$ categorical. We say that $\mathcal{A}$ is
\emph{relatively computably categorical} if for every structure
$\mathcal{B}$ isomorphic to $\mathcal{A}$, there is an isomorphism
that is computable relative to the atomic diagram of $\mathcal{B}$.
Similarly, a computable $\mathcal{A}$ is \emph{relatively }$\Delta
_{\alpha }^{0}$\emph{\ categorical} if for every $\mathcal{B}$
isomorphic to $\mathcal{A}$, there is an isomorphism that is $\Delta
_{\alpha }^{0}$ relative to the atomic diagram of
$\mathcal{B}$. Clearly, a relatively $\Delta _{\alpha }^{0}$
categorical structure is $\Delta _{\alpha }^{0}$ categorical. We are
especially interested in the case when $\alpha =2$.  McCoy
\cite{McCoy} \ characterized, under certain restrictions, $\Delta
_{2}^{0}$ categorical and relatively $\Delta _{2}^{0}$ categorical
linear orderings and Boolean algebras. For example, a computable
Boolean algebra is relatively $\Delta _{2}^{0}$ categorical if and
only if it can be expressed as a finite direct sum $c_{1}\vee \ldots
\vee c_{n}$, where each $c_{i}$ is either atomless, an atom, or a
$1$-atom. Using an enumeration result of Selivanov \cite{S}, Goncharov
\cite{G2} showed that there is a computable structure, which is
computably categorical but not relatively computably
categorical. 

Using a relativized version of Selivanov's enumeration
result, Goncharov, Harizanov, Knight, McCoy, Miller, and Solomon
\cite{G-H-K-M-M-S} showed that for each computable successor ordinal
$\alpha $, there is a computable structure, which is $\Delta _{\alpha
}^{0}$ categorical but not relatively $\Delta _{\alpha }^{0}$
categorical.  It was later shown by Chisholm, Fokina, Goncharov,
Harizanov, Knight, and Quinn \cite{cfghkm} that the same is true
for every computable limit ordinal $\alpha$.

It is not known whether for any
computable successor ordinal $\alpha $, there is a rigid computable
structure that is $\Delta _{\alpha }^{0}$ categorical but not
relatively $\Delta _{\alpha }^{0}$ categorical. Another open question
is whether every $\Delta _{1}^{1}$ categorical computable structure
must be relatively $\Delta _{1}^{1}$ categorical (see \cite{G-H-K-S}).

\label{sec:background}There are syntactic conditions that are equivalent to
relative $\Delta _{\alpha }^{0}$ categoricity. These conditions
involve the existence of certain families of formulas, that is,
certain Scott families.  Scott families come from Scott's Isomorphism
Theorem, which says that for a countable structure $\mathcal{A}$,
there is an $L_{\omega _{1}\omega }$ sentence whose countable models
are exactly the isomorphic copies of $\mathcal{A}$. A \emph{Scott
family }for a structure $\mathcal{A}$ is a countable family $\Phi $ of
$L_{\omega _{1}\omega }$ formulas, possibly with finitely many fixed
parameters from $A$, such that:

($i$) Each finite tuple in $\mathcal{A}$ satisfies some $\psi \in \Phi $;

($ii$)\ If $\overrightarrow{a}$, $\overrightarrow{b}$ are tuples in
$\mathcal{A}$ of the same length, satisfying the same formula in
$\Phi $, then they are automorphic; that is, there is an automorphism of $\mathcal{A}$ that maps
$\overrightarrow{a}$ to $\overrightarrow{b}$.

A \emph{formally c.e.\ Scott family} is a c.e.\ Scott family
consisting of finitary existential formulas. A \emph{formally $\Sigma
_{\alpha }^{0}$ Scott family} is a $\Sigma _{\alpha }^{0}$ Scott
family consisting of computable $\Sigma _{\alpha }$ formulas. Roughly
speaking, computable infinitary formulas are $L_{\omega _{1}\omega }$
formulas in which the infinite disjunctions and conjunctions are taken
over computably enumerable (c.e.) sets. We can classify computable
formulas according to their complexity as follows. A computable
$\Sigma _{0}$ or $\Pi _{0}$ formula is a finitary quantifier-free
formula. Let $\alpha >0$ be a computable ordinal. A computable $\Sigma
_{\alpha }$ formula is a c.e.\ disjunction of formulas $(\exists
\overrightarrow{u})\theta (\overrightarrow{x},\overrightarrow{u})$,
where $\theta $ is computable $\Pi _{\beta }$ for some $\beta <\alpha
$. A computable $\Pi _{\alpha }$ formula is a c.e.\ conjunction of
formulas $(\forall \overrightarrow{u})\theta
(\overrightarrow{x},\overrightarrow{u})$, where $\theta $ is
computable $\Sigma _{\beta }$ for some $\beta <\alpha $.  Precise
definition of computable infinitary formulas involves assigning
indices to the formulas, based on Kleene's system of ordinal notations
(see \cite{A-K}). The important property of these formulas is given in
the following theorem due to Ash.

\begin{theorem}
For a structure $\mathcal{A}$, if $\theta (\overrightarrow{x})$ is a
computable $\Sigma _{\alpha }$ formula, then the set
$\{\overrightarrow{a}: \mathcal{A}\models \theta
(\overrightarrow{a})\}$ is $\Sigma _{\alpha }^{0}$ relative to the
atomic diagram of $\mathcal{A}$.
\end{theorem}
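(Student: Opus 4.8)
The plan is to prove this by induction on the computable ordinal $\alpha$, following Ash's original argument. First I would handle the base case $\alpha = 0$: here $\theta(\ovx)$ is a finitary quantifier-free formula, so by the definition of a computable structure, the relation $\{\ova : \mathcal{A}\models\theta(\ova)\}$ is computable, hence $\Sigma^0_0$ relative to (indeed, uniformly computable from) the atomic diagram of $\mathcal{A}$. Strictly speaking one should also track uniformity: the reduction should be uniform in an index for $\theta$, so that the inductive step can invoke the hypothesis on subformulas whose indices are computed from the index of $\theta$.

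Next, for the inductive step at a computable ordinal $\alpha > 0$, I would assume the statement holds (uniformly) for all computable $\Sigma_\beta$ and $\Pi_\beta$ formulas with $\beta < \alpha$. By definition a computable $\Sigma_\alpha$ formula $\theta(\ovx)$ is a c.e.\ disjunction $\bigvee_{i} (\exists \ov{u}_i)\,\psi_i(\ovx,\ov{u}_i)$, where each $\psi_i$ is computable $\Pi_{\beta_i}$ for some $\beta_i < \alpha$, and where the map $i \mapsto (\text{index for }\psi_i,\ \beta_i)$ is computable (this is what the assignment of indices via Kleene's $\mathcal O$ guarantees). By the induction hypothesis, relative to the atomic diagram $D(\mathcal{A})$ there is, uniformly in $i$, a $\Pi^0_{\beta_i}$ (hence $\Pi^0_{<\alpha}$, so $\Sigma^0_\alpha$) description of $\{(\ova,\ovb) : \mathcal{A}\models\psi_i(\ova,\ovb)\}$. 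Then
\[
\mathcal{A}\models\theta(\ova) \iff (\exists i)(\exists \ovb)\, \bigl[\mathcal{A}\models\psi_i(\ova,\ov b)\bigr],
\]
and an existential quantifier over $\omega$ (the witness $i$ and the tuple $\ovb$, coded as a single number) in front of a $\Sigma^0_\alpha$ relative predicate keeps the result $\Sigma^0_\alpha$ relative to $D(\mathcal{A})$, using the standard closure of $\Sigma^0_\alpha$ under number quantification and c.e.\ unions. The $\Pi_\alpha$ case is dual: a c.e.\ conjunction of $(\forall\ov u)\psi_i$ with $\psi_i$ computable $\Sigma_{\beta_i}$ becomes, after applying the induction hypothesis, a countable (c.e.-indexed) conjunction of $\Pi^0_\alpha$-relative predicates preceded by universal number quantifiers, which is $\Pi^0_\alpha$ relative to $D(\mathcal{A})$; since the theorem as stated is only about $\Sigma_\alpha$ formulas, the $\Pi$ case is carried only as part of the strengthened induction hypothesis.

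The main obstacle, and the only genuinely delicate point, is the bookkeeping with ordinal notations: one must be careful that ``$\beta < \alpha$'' is interpreted through Kleene's $\mathcal O$ so that the recursion is along a well-founded, effectively presented ordering, and that all the reductions can be made uniform in the notation for $\alpha$ and in the index for the formula. This is exactly where the precise definition of computable infinitary formulas (the index assignment alluded to in the text, cf.\ \cite{A-K}) does the work: it ensures that from an index for $\theta$ one can effectively recover indices for the disjuncts $\psi_i$ together with notations for the smaller ordinals $\beta_i$, so that the transfinite recursion producing the $\Sigma^0_\alpha$ computation from $D(\mathcal A)$ is itself effective. Once uniformity is set up correctly, the arithmetical-hierarchy closure facts used at each successor and limit stage are entirely routine, so I would state the uniformity carefully at the outset and then let the induction run.
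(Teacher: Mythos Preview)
Your argument is correct: the standard proof proceeds by effective transfinite induction on the ordinal notation, simultaneously for $\Sigma_\alpha$ and $\Pi_\alpha$ formulas, exactly as you outline, with the key point being the uniformity in indices that the coding via Kleene's $\mathcal{O}$ provides. There is nothing to compare it to, however, because the paper does not prove this theorem: it is simply stated as a known result due to Ash (with a pointer to \cite{A-K} for the precise definitions), so your sketch goes beyond what the paper itself supplies.
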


\noindent An analogous result holds for computable $\Pi _{\alpha }$ formulas.

It is easy to see that if $\mathcal{A}$ has a formally c.e.\ Scott
family, then $\mathcal{A}$ is relatively computably categorical. In
general, if $\mathcal{A}$ has a formally $\Sigma _{\alpha }^{0}$ Scott
family, then $\mathcal{A}$ is relatively $\Delta _{\alpha }^{0}$
categorical. Goncharov \cite{G2} showed that if $\mathcal{A}$ is
$2$-decidable and computably categorical, then it has a formally c.e.\
Scott family. Ash \cite{A} showed that, under certain decidability
conditions on $\mathcal{A}$, if $\mathcal{A} $ is $\Delta _{\alpha
}^{0}$ categorical, then it has a formally $\Sigma _{\alpha }^{0}$
Scott family. For the relative notions, the decidability conditions
are not needed. Moreover, Ash, Knight, Manasse, and Slaman \cite
{A-K-M-S}, and independently Chisholm \cite{C} established the
following result.

\begin{theorem}
Let $\mathcal{A}$ be a computable structure. Then the following are
equivalent:

\begin{enumerate}
\item[$($a$)$] $\emph{A}$ is relatively $\Delta _{\alpha }^{0}$ categorical;

\item[$($b$)$] $\emph{A}$ has a formally $\Sigma _{\alpha }^{0}$ Scott
family;

\item[$($c$)$] $\emph{A}$ has a c.e.\ Scott family consisting of computable $\Sigma _{\alpha }$\ formulas.
\end{enumerate}
\end{theorem}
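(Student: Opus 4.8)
The plan is to prove the cycle $($c$)\Rightarrow($b$)\Rightarrow($a$)\Rightarrow($c$)$, which suffices for the equivalence. The first implication is immediate: a c.e.\ set is $\Sigma^0_1$, hence $\Sigma^0_\alpha$, so a c.e.\ Scott family of computable $\Sigma_\alpha$ formulas is already a formally $\Sigma^0_\alpha$ Scott family. (One also gets $($b$)\Rightarrow($c$)$ directly by absorbing a $\Sigma^0_\alpha$ index for the family into the formulas, uniformly replacing $\varphi_i$ by $\sigma_i\wedge\varphi_i$ where $\sigma_i$ is a parameter-free computable $\Sigma_\alpha$ sentence coding "$i$ belongs to the index set'', but the cycle makes this unnecessary.)

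For $($b$)\Rightarrow($a$)$, let $\Phi$ be a formally $\Sigma^0_\alpha$ Scott family and $\mathcal{B}$ a computable copy of $\mathcal{A}$, with atomic diagram $D(\mathcal{B})$. I would run a back-and-forth construction maintaining a finite partial map $\overrightarrow{a}\mapsto\overrightarrow{b}$ with $\overrightarrow{a}$ and $\overrightarrow{b}$ satisfying a common formula of $\Phi$. The point that keeps the construction at level $\alpha$ is that, by property $($ii$)$, a single common formula already forces $\overrightarrow{a}$ and $\overrightarrow{b}$ to realize the \emph{same} orbit type: pull $\overrightarrow{b}$ back to $\mathcal{A}$ along an isomorphism $\mathcal{B}\to\mathcal{A}$ and apply $($ii$)$. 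Hence, to extend the map at a new $c\in B$, it suffices to find $d$ and $\psi\in\Phi$ with $\mathcal{A}\models\psi(\overrightarrow{a}c)$ and $\mathcal{B}\models\psi(\overrightarrow{b}d)$; such $d$ exists, again by $($ii$)$, and by Ash's theorem "$\mathcal{B}\models\psi(\overrightarrow{b}d)$'' is $\Sigma^0_\alpha$ relative to $D(\mathcal{B})$. A $\Delta^0_\alpha(D(\mathcal{B}))$ oracle therefore drives the whole construction, and the isomorphism produced is $\Delta^0_\alpha(D(\mathcal{B}))$.

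The substantial direction is $($a$)\Rightarrow($c$)$, for which I would force a copy of $\mathcal{A}$. Let the conditions be finite partial isomorphisms from a structure with universe $\omega$ into $\mathcal{A}$; a sufficiently generic filter $G$ gives a copy $\mathcal{B}$ of $\mathcal{A}$ whose atomic diagram is $G$-computable, together with an isomorphism. Since relative $\Delta^0_\alpha$ categoricity applies to \emph{every} copy, fix $e$ with $g=\Phi_e^{\Delta^0_\alpha(D(\mathcal{B}))}$ an isomorphism $\mathcal{A}\to\mathcal{B}$. For a tuple $\overrightarrow{a}$ in $\mathcal{A}$, the statement "$g(\overrightarrow{a})=\overrightarrow{b_0}$'', where $\overrightarrow{b_0}=g(\overrightarrow{a})$, is $\Sigma^0_\alpha$ relative to $D(\mathcal{B})$ and true of the generic, so it is forced by some $p\in G$; writing $\overrightarrow{n}$ for the tuple $p$ maps to $\overrightarrow{a}$, let $\varphi_{\overrightarrow{a}}(\overrightarrow{x})$ assert that some condition $q$ agreeing with $p$ on its "copy side'' but sending $\overrightarrow{n}$ to $\overrightarrow{x}$ forces the index-$e$ computation to output $\overrightarrow{b_0}$ on input $\overrightarrow{x}$. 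By the definability of the forcing relation for $\Sigma^0_\alpha$ facts about the generic by computable $\Sigma_\alpha$ formulas over $\mathcal{A}$ — proved by a transfinite induction along Kleene's notations — $\varphi_{\overrightarrow{a}}$ is a computable $\Sigma_\alpha$ formula, and it holds of $\overrightarrow{a}$ by the choice of $p$. If $\mathcal{A}\models\varphi_{\overrightarrow{a}}(\overrightarrow{c})$, the witnessing $q$ and $p$ share a copy side, so they extend in lockstep to a single generic copy $\mathcal{B}$ bearing two isomorphisms $f_1,f_2\colon\mathcal{B}\to\mathcal{A}$ (both the index-$e$ functional) with $f_1(\overrightarrow{b_0})=\overrightarrow{a}$ and $f_2(\overrightarrow{b_0})=\overrightarrow{c}$; then $f_2\circ f_1^{-1}$ is an automorphism of $\mathcal{A}$ sending $\overrightarrow{a}$ to $\overrightarrow{c}$. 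Thus $\{\varphi_{\overrightarrow{a}}\}$ has property $($ii$)$; property $($i$)$ is built in and the family is c.e., giving $($c$)$.

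I expect the main obstacle to be this forcing half: arranging the forcing so the generic is genuinely a copy of $\mathcal{A}$ and so the behaviour of the $\Delta^0_\alpha(D(\mathcal{B}))$ isomorphism on each fixed tuple is decided by a single condition; proving the definability theorem for the $\Sigma^0_\alpha$ and $\Pi^0_\alpha$ forcing relations by computable $\Sigma_\alpha$ and $\Pi_\alpha$ formulas (the induction that underlies everything); and pinning down $\varphi_{\overrightarrow{a}}$ in precisely the form that makes the lockstep argument for property $($ii$)$ work. By contrast, the back-and-forth of $($b$)\Rightarrow($a$)$ and the bookkeeping in $($c$)\Leftrightarrow($b$)$ are routine.
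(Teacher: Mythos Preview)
The paper does not give a proof of this theorem at all. It is stated in the introductory section as a known result, attributed to Ash, Knight, Manasse, and Slaman \cite{A-K-M-S} and independently to Chisholm \cite{C}, and is used only as background for the later sections. So there is no ``paper's own proof'' to compare against.

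Your sketch follows the standard argument from those references. The easy implications $($c$)\Rightarrow($b$)$ and $($b$)\Rightarrow($a$)$ are correctly outlined; the back-and-forth in $($b$)\Rightarrow($a$)$ is right, and the reason the construction stays at level $\Delta^0_\alpha(D(\mathcal{B}))$ is exactly the one you give, namely that the searches are terminating $\Sigma^0_\alpha(D(\mathcal{B}))$ searches. For $($a$)\Rightarrow($c$)$ you have the right architecture: force a generic copy, use relative categoricity to get an index $e$ for the isomorphism, prove that forcing of $\Sigma^0_\alpha$ facts is definable by computable $\Sigma_\alpha$ formulas, and read off the Scott family. Two points where your write-up would need tightening: first, the index $e$ is not uniform over all generics, so one must first pass below a condition that decides $e$ (equivalently, the parameters in the Scott family absorb this choice); second, your description of $\varphi_{\overrightarrow{a}}$ has the directions slightly tangled --- the condition $p$ records a finite piece of the map $\mathcal{B}\to\mathcal{A}$, and the formula should speak about elements of $\mathcal{A}$ that can appear as the image of $g(\overrightarrow{a})$ under such a piece, so the roles of $\overrightarrow{n}$, $\overrightarrow{b_0}$, and $\overrightarrow{x}$ need to be sorted out carefully. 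You already flag exactly these issues as the expected obstacles, so the plan is sound.
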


A structure is \emph{rigid} if it does not have nontrivial
automorphisms. A computable structure is $\Delta _{\alpha
}^{0}$\emph{\ stable} if every isomorphism from $\mathcal{A}$ onto a
computable structure is $\Delta _{\alpha }^{0}$. If a computable
structure is rigid and $\Delta _{\alpha }^{0}$ categorical, then it is
$\Delta _{\alpha }^{0}$ stable. A \emph{defining family} for a
structure $\mathcal{A}$ is a set $\Phi $ of formulas with one free
variable and a fixed finite tuple of parameters from $A$ such that:

($i$)\ Every element of $A$ satisfies some formula $\psi \in \Phi $;

($ii$)\ No formula of $\Phi $ is satisfied by more than one element of $A$.

\noindent A defining family $\Phi $ is \emph{formally $\Sigma _{\alpha}^{0}$}
if it is a $\Sigma _{\alpha }^{0}$ set of computable $\Sigma _{\alpha }$
formulas. In particular, a defining family $\Phi $ is \emph{formally c.e.}
if it is a c.e.\ set of finitary existential formulas. For a rigid
computable structure $\mathcal{A}$, there is a formally $\Sigma _{\alpha
}^{0}$ Scott family iff there is a formally $\Sigma _{\alpha }^{0}$ defining
family. 

In \cite{G2}, Goncharov obtained a rigid structure that is computably
stable but not relatively computably stable.  It is not known for any
computable ordinal $\alpha > 1$ whether there is a computable
structure that is $\Delta^0_\alpha$ stable but not relatively
$\Delta^0_\alpha$ stable.

In Section $2$, we investigate algorithmic properties of Abelian
groups and their characters, and we provide a connection
between equivalence structures and Abelian $p$-groups. In
Section $3$, we examine effective categoricity of Abelian
$p$-groups. We show that every computably categorical Abelian
$p$-group is also relatively computably categorical. 


The notions and notation of computability theory are standard and as in
Soare \cite{Soa87}. We fix $\left\langle \cdot ,\cdot \right\rangle $ to be
a computable bijection from $\omega ^{2}$ onto $\omega $. Let $(W_{e})_{e\in
\omega }$ be an effective enumeration of all c.e. sets.

\section{Computable Abelian $p$-Groups and Equivalence Structures}

Let $\mathcal{G} = (G, +^{\mathcal{A}}, 0)$ be a computable Abelian $p$-group,
and assume that $G = \omega$ and that $0$ is the identity for the
operation $+^{\mathcal{G}}$. It is immediate that the subtraction
function $-^{\mathcal{G}}$ as well as the inverse function are also
computable. In this section, we will focus on direct sums of cyclic
and quasicyclic groups and their connection with equivalence structures.

First, we need some definitions.  Let $p$ be a prime number. The group
$\G$ is said to be a $p$-group if, for all
$g \in G$, the order of $g$ is a power of $p$.  ${\mathbb Z}(p^n)$ is the cyclic
group of order $p^n$.  ${\mathbb Z}(p^{\infty})$ denotes the
quasicyclic $p$-group, the direct limit of the sequence ${\mathbb
Z}(p^n)$ and also the set of rationals in $[0,1)$ of the form $\frac
i{p^n}$ with addition modulo 1.  

\begin{defn} The \emph{period} of $G$ is $max\{|g|: g \in G\}$ if this
  quantity is finite, and $\infty$ otherwise.\end{defn}

The subgroups $p^{\alpha} G$, where $\alpha$ is an ordinal, are
defined recursively as follows:

\begin{description}
\item $p^0G = G$, $pG = \{px: x \in G\}$,

\item $p^{\alpha+1}G = p(p^\alpha G)$, and

\item $p^\lambda G = \bigcap_{\alpha<\lambda} p^\alpha G$ for limit
$\lambda$.
\end{description}

The \emph{length} of $\G$, $lh(\G)$, is the least ordinal $\alpha$
such that $p^{\alpha +1}G = p^{\alpha}G$. The \emph{divisible
part} of $\G$ is $D(\G) = p^{lh(G)}G$ and is a subgroup of $\G$. $\G$
is said to be \emph{reduced} if $D(\G) = \{0\}$. 
 
For an element $g \in G$, the height $ht(g)$ is $\infty$ if $g \in
D(G)$ and is otherwise the least $\alpha$ such that $g \notin
p^{\alpha+1}G$.  For a computable group $G$, $ht(g)$ can be an
arbitrary computable ordinal.  The height of $G$ is the supremum of
$\{ht(g): g \in G\}$.

Here are some classic results about Abelian $p$-groups which we will
need. The reader is referred to Fuchs \cite{F70} for a full
development of the theory of infinite Abelian groups.

\begin{theorem} \label{thm:cls} \begin{enumerate}
\item \textnormal{\textbf{(Baer)}} For any $p$-group $\G$, there
exists a subgroup $\A$ such that $G = \A \oplus D(\G)$.
\item \textnormal{\textbf{(Pr\"ufer)}} If $G$ is a countable Abelian $p$-group, then
$G$ is a direct sum of cyclic groups if and only if all nonzero
  elements have finite height.
\end{enumerate}
\end{theorem}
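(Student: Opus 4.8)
The plan is to handle the two parts separately, as they are independent classical facts.

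For part~(1), the strategy is to show that $D(\G)$ is a divisible subgroup and then invoke the general principle that a divisible subgroup of an abelian group splits off as a direct summand. First I would check that $D(\G)=p^{lh(G)}G$ is $p$-divisible: by the definition of $lh(\G)$ we have $p\cdot p^{lh(G)}G=p^{lh(G)+1}G=p^{lh(G)}G$. Since $\G$ is a $p$-group, $p$-divisibility upgrades to full divisibility --- given $g\in D(\G)$ of order $p^{j}$ and a nonzero integer $n=p^{a}m$ with $p\nmid m$, one divides $g$ by $p$ repeatedly $a$ times and then multiplies by an integer inverse of $m$ modulo $p^{j}$. Next I would prove, via Baer's criterion, that a divisible abelian group $D$ is an injective $\Z$-module: a homomorphism from an ideal $n\Z$ into $D$ is determined by the image $d$ of $n$, and choosing $d'\in D$ with $nd'=d$ extends it to all of $\Z$. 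Applying injectivity to the inclusion $D(\G)\hookrightarrow G$ produces a homomorphism $\pi\colon G\to D(\G)$ that is the identity on $D(\G)$, whence $G=D(\G)\oplus\ker\pi$ and $\A:=\ker\pi$ is the required complement. There is no genuine obstacle here beyond recalling Baer's criterion.

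For part~(2), one direction is a short computation. If $G=\bigoplus_{i}C_{i}$ with each $C_{i}$ cyclic of order $p^{n_{i}}$, then $p^{k}G=\bigoplus_{i}p^{k}C_{i}$, so an element $g=(g_{i})_{i}$ lies in $p^{k}G$ exactly when each component $g_{i}$ lies in $p^{k}C_{i}$; hence $ht(g)=\min_{i}ht(g_{i})$, which is finite for $g\neq 0$ because some component is nonzero and every nonzero element of a finite cyclic $p$-group has finite height. For the converse --- the substantial half, Pr\"ufer's theorem proper --- I would first note that ``every nonzero element has finite height'' is equivalent to $p^{\omega}G=\bigcap_{n}p^{n}G=0$, since $ht(g)$ is finite precisely when $g\notin p^{n}G$ for some $n<\omega$. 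The plan is then to reduce to Kulikov's criterion: a countable abelian $p$-group $H$ is a direct sum of cyclic groups iff $H$ is the union of an ascending chain $H_{1}\subseteq H_{2}\subseteq\cdots$ of subgroups such that, for each $n$, the heights in $H$ of the nonzero elements of $H_{n}$ are bounded by some finite number. Granting this, enumerate $G=\{g_{0},g_{1},\ldots\}$ and set $G_{n}=\la g_{0},\ldots,g_{n}\ra$; each $G_{n}$ is finite, and since $p^{\omega}G=0$ every nonzero element of the finite set $G_{n}$ has finite $G$-height, so these heights are automatically bounded and Kulikov's criterion applies.

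The hard part is the proof of Kulikov's criterion itself. I would prove the nontrivial implication by an induction that extracts cyclic direct summands ``level by level'': working with the socle $G[p]=\{g:pg=0\}$ and its filtration by the subgroups $G[p]\cap p^{k}G$, one repeatedly chooses elements of least height that generate new cyclic summands, while arranging that the growing partial direct sum $B$ remains pure in $G$ (that is, $B\cap p^{k}G=p^{k}B$ for all $k$) so that the construction can be continued and, in the limit, exhausts $G$. Organizing this induction so that the chosen elements stay independent and the accumulated summand stays pure is the real technical content; the remaining verifications are routine. Full details are in Fuchs~\cite{F70}.
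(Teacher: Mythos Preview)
The paper does not actually prove this theorem: it is stated as a classical result and the reader is simply referred to Fuchs~\cite{F70}. Your proposal is a correct sketch of the standard arguments --- for~(1) you verify that $D(\G)$ is divisible and invoke the injectivity of divisible abelian groups (exactly Baer's theorem, also quoted without proof as Theorem~\ref{thm:ds}(2) in the paper), and for~(2) you reduce to Kulikov's criterion via the finite subgroups $G_n=\la g_0,\dots,g_n\ra$, which is precisely the route taken in Fuchs. There is nothing to correct; your write-up simply supplies what the paper omits.
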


\begin{defn} Let $\A$ be a subgroup of $\G$. 
\begin{enumerate} 
\item $\A$ is a \emph{direct summand} of $\G$ if there exists
a subgroup $\B$ of $\G$ such that $\G = \A \oplus \B$.
\item $\A$ is a \emph{pure} subgroup of $\G$ if $A \cap p^n G = p^n A$
  for all $n$, that is the height of an element $a \in A$ is the same
  in $\A$ as it is in $\G$. 
\end{enumerate}
\end{defn}

We need some results from group theory on direct summands. See
\cite{F70} for more details.
\begin{theorem} \label{thm:ds}
\begin{enumerate}
\item \textnormal{\textbf{(Kulikov)}} If $\A$ has finite period and is
  a pure subgroup of $\G$, then $\A$ is a direct summand of $\G$.
\item \textnormal{\textbf{(Baer)}} Any divisible subgroup $\D$ of a group $\A$ is a direct
summand of $\A$.
\end{enumerate}
\end{theorem}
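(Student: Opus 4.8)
The statement collects two classical facts, and my plan is to prove each by reducing it to a standard tool. Part~(2) I would derive from the fact that a divisible abelian group is an injective $\Z$-module: by Baer's criterion it is enough to extend to all of $\Z$ an arbitrary homomorphism $f\colon n\Z \to \D$, and since such an $f$ is determined by $d := f(n)$, choosing $d' \in \D$ with $nd' = d$ (possible as $\D$ is divisible) the assignment $k \mapsto k d'$ extends $f$. Applying injectivity of $\D$ to the inclusion $\D \hookrightarrow \A$ and the identity map $\D \to \D$ then produces a homomorphism $\pi\colon \A \to \D$ with $\pi|_{\D} = \mathrm{id}_{\D}$, whence $\A = \D \oplus \ker\pi$ by the usual computation $a = \pi(a) + (a - \pi(a))$. (A direct Zorn's-lemma argument, taking $\B$ maximal with $\B \cap \D = \{0\}$ and showing $\D + \B = \A$, also works, but the injectivity route is cleaner.)

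For part~(1), suppose $p^n\A = \{0\}$. Purity of $\A$ gives $p^n\G \cap \A = p^n\A = \{0\}$, so the quotient map $q\colon \G \to \bar\G := \G/p^n\G$ is injective on $\A$; set $\bar\A = q(\A) \cong \A$. Two checks are needed: first, that $\bar\A$ is pure in $\bar\G$ (for $m \le n$, lift a representative of an element of $\bar\A \cap p^m\bar\G$ into $p^m\G$ and invoke purity of $\A$ in $\G$; for $m > n$ it is trivial since $p^m\bar\G = \{0\}$), and second, that a complement of $\bar\A$ in $\bar\G$ pulls back along $q$ to a complement of $\A$ in $\G$ (a routine preimage argument using $p^n\G \cap \A = \{0\}$). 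This reduces everything to the case in which the ambient group $\bar\G$ is bounded by $p^n$.

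In that case $\bar\G/\bar\A$ is a countable $p$-group in which every nonzero element has finite height, so by Pr\"ufer's theorem (Theorem~\ref{thm:cls}) it is a direct sum of cyclic groups, with cyclic summands generated by cosets $\bar x_i + \bar\A$ of order $p^{k_i}$. Since $p^{k_i}\bar x_i \in \bar\A \cap p^{k_i}\bar\G = p^{k_i}\bar\A$ by purity, write $p^{k_i}\bar x_i = p^{k_i}\bar a_i$ with $\bar a_i \in \bar\A$ and put $\bar y_i = \bar x_i - \bar a_i$; these elements have order exactly $p^{k_i}$ and the same image as $\bar x_i$ in $\bar\G/\bar\A$, and one checks that $\sum_i \langle \bar y_i\rangle$ is an internal direct sum meeting $\bar\A$ trivially and satisfying $\bar\A + \sum_i \langle \bar y_i \rangle = \bar\G$. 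Thus $\bar\A$ is a direct summand of $\bar\G$, and pulling back along $q$ completes the proof.

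The one genuinely non-obvious point is the reduction in part~(1): Pr\"ufer's theorem cannot be applied to $\G$ directly, since $\G/\A$ need not be a direct sum of cyclic groups when $\G$ is unbounded, so one must first pass to $\G/p^n\G$ and then verify that purity survives the quotient and that the splitting lifts back. The lifting lemma in the bounded case, and all of part~(2), are routine.
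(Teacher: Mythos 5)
Your proof is correct, but note that the paper itself does not prove this theorem: both parts are quoted as classical facts (Kulikov, Baer) with a pointer to Fuchs \cite{F70}, so there is no in-paper argument to compare against. What you supply is essentially the standard textbook treatment. Part (2) via injectivity of divisible groups (Baer's criterion on the ideals $n\Z$, then a retraction $\pi\colon\A\to\D$ giving $\A=\D\oplus\ker\pi$) is complete. Part (1) is the usual proof of Kulikov's theorem: since $\A\cap p^n\G=p^n\A=\{0\}$, the quotient map $q\colon\G\to\G/p^n\G$ is injective on $\A$, purity passes to the quotient, and a complement upstairs pulls back along $q$ --- all three checks go through exactly as you indicate; then in the bounded case you prove, in effect, the key splitting lemma that a pure subgroup whose quotient is a direct sum of cyclic groups is a direct summand, via the adjustment $\bar y_i=\bar x_i-\bar a_i$ with $p^{k_i}\bar x_i=p^{k_i}\bar a_i$ supplied by purity. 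Two small remarks: invoking Pr\"ufer's theorem in the form of Theorem~\ref{thm:cls} restricts you to countable groups --- harmless in this paper, where every structure has universe $\omega$, but the classical Kulikov theorem has no countability hypothesis, and in the bounded case one would instead cite the Pr\"ufer--Baer theorem that every bounded abelian group, of any cardinality, is a direct sum of cyclic groups; and the internal directness of $\sum_i\langle\bar y_i\rangle$ is not actually needed, since $\bar\A\cap\sum_i\langle\bar y_i\rangle=\{0\}$ together with $\bar\A+\sum_i\langle\bar y_i\rangle=\bar\G$ already yields the splitting.
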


The \emph{Ulm subgroups} $G^{\alpha}$ are defined by $G^{\alpha} =
p^{\omega \alpha} G$. The $\alpha$th \emph{Ulm factor} $G_{\alpha}$
of $G$ is $G^{\alpha}/G^{\alpha+1}$, and the \emph{Ulm length} $\lambda(A)$
of $G$ is the least $\alpha$ such that $G^{\alpha} = G^{\alpha+1}$.

It follows from Theorem \ref{thm:cls} that each Ulm factor is a direct
sum of cyclic groups. Thus each Ulm factor $G_{\alpha}$ is a direct
sum of cyclic groups.  Now consider the sequence of \[P_\alpha(G) =
G_\alpha \cap \{x \in G : px = 0\}.\]  Let $u_\alpha(G) =
\dim_{\mathbb{Z}_p} P_{\alpha}(G) / P_{\alpha+1}(G)$.

\begin{theorem} [Ulm] Two Abelian $p$-groups $G$ and $H$ are isomorphic if and
only if they have the same Ulm sequence, that is, if and only if $\lambda(G)
= \lambda(H)$ and $u_{\alpha}(G) = u_\alpha(H)$ for all $\alpha$.
\end{theorem}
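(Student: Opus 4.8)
The ``only if'' direction is routine. Each subgroup $p^\alpha G$ is fully invariant, so any isomorphism $\varphi\colon G\to H$ satisfies $\varphi(p^\alpha G)=p^\alpha H$ for every ordinal $\alpha$; hence $\varphi$ carries the Ulm subgroup $G^\alpha=p^{\omega\alpha}G$ onto $H^\alpha$, induces isomorphisms $G_\alpha\cong H_\alpha$ of the Ulm factors, and restricts to $\mathbb{Z}_p$-vector space isomorphisms of the layers defining the $u_\alpha$'s, so $\lambda(G)=\lambda(H)$ and $u_\alpha(G)=u_\alpha(H)$ for all $\alpha$. To make the statement meaningful without a reducedness hypothesis I would first apply Baer's theorem (Theorem~\ref{thm:cls}(1)) to write $G=R_G\oplus D(G)$ and $H=R_H\oplus D(H)$ with $R_G,R_H$ reduced; a countable divisible $p$-group is a direct sum of copies of $\mathbb{Z}(p^\infty)$ and the number of copies is an isomorphism invariant, so the problem reduces to the case where $G$ and $H$ are reduced, which I assume from now on.

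For the ``if'' direction, call a homomorphism $\psi$ between subgroups of $G$ and $H$ \emph{height-preserving} if $ht(\psi(x))=ht(x)$ for all $x$ in its domain (heights taken in $G$ and $H$, respectively); such a $\psi$ is automatically injective. Since $G$ and $H$ are countable, fix enumerations of their elements and build, by a back-and-forth construction, an increasing chain $\psi_0\subseteq\psi_1\subseteq\cdots$ of height-preserving isomorphisms between finite subgroups, starting from $\psi_0\colon\{0\}\to\{0\}$ and arranging at stage $2n+1$ that the $n$th element of $G$ lies in the domain and at stage $2n+2$ that the $n$th element of $H$ lies in the range. Then $\psi=\bigcup_n\psi_n$ is a homomorphism (a union of homomorphisms along a chain of subgroups), is injective because height-preserving, and is surjective by the construction, hence is an isomorphism $G\cong H$. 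By symmetry everything reduces to one extension step: given a height-preserving isomorphism $\psi\colon F\to F'$ between finite subgroups and an element $g\in G$, extend $\psi$ to a height-preserving isomorphism on $F+\langle g\rangle$.

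To perform the extension step, let $n$ be least with $p^n g\in F$; if $n\ge 1$, then adjoining $p^{n-1}g$, then $p^{n-2}g,\dots$, and finally $g$ one element at a time reduces to the case $pg\in F$ and $g\notin F$. Replacing $g$ by $g+f$ for a suitable $f\in F$ (which alters neither $F+\langle g\rangle$ nor the problem) I would arrange that $ht(g)$ is maximal in the finite coset $g+F$; write $\alpha=ht(g)$. Since $g\in p^\alpha G$ we get $\psi(pg)\in p^{\alpha+1}H$, so there is $h_0\in p^\alpha H$ with $ph_0=\psi(pg)$; the work is to correct $h_0$ to an element $h$ of height exactly $\alpha$ which is moreover of maximal height in its coset modulo $F'$, and then to verify that $g\mapsto h$ extends $\psi$ to a height-preserving homomorphism on all of $F+\langle g\rangle$ --- well-definedness being exactly $ph=\psi(pg)$, and height-preservation following from the ultrametric law $ht(x+y)=\min\{ht(x),ht(y)\}$ when $ht(x)\ne ht(y)$ together with the maximality of the chosen heights. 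The correction of $h_0$ is where the hypothesis is used: if every available solution of $ph_0=\psi(pg)$ in $p^\alpha H$ has height strictly above $\alpha$, or fails to be proper modulo $F'$, one adjusts $h_0$ by an element of $H$ of order $p$ and height exactly $\alpha$ representing a suitable nonzero class among the Ulm invariants at level $\alpha$; that such a class exists is guaranteed by $u_\alpha(G)=u_\alpha(H)$ together with $\lambda(G)=\lambda(H)$, because the corresponding configuration on the $G$-side (a height-maximal $g$ of height $\alpha$ with $pg$ of strictly larger height, or $pg=0$) forces a matching nonzero contribution to $u_\alpha(G)$.

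I expect this extension step to be the main obstacle, specifically the verification that a correctly chosen, height-maximal $h$ with $ph=\psi(pg)$ actually produces a height-preserving map on all of $F+\langle g\rangle$ rather than merely sending $g$ to an element of the right height: this is the point where the combinatorics of heights, the ``properness'' (height-maximality in a coset) on both sides, and the single relevant Ulm invariant $u_\alpha$ must be matched simultaneously, and it is the reason Ulm's theorem needs countability. The back-and-forth skeleton, the reduction to the case $pg\in F$, and the divisible-part reduction are all routine by comparison.
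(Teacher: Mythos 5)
The paper itself gives no proof of this statement; it is quoted as Ulm's classical theorem, with the reader referred to Fuchs, so the only comparison available is with the standard argument, and your sketch is indeed the standard Kaplansky-style proof (back-and-forth through height-preserving isomorphisms of finite subgroups, reduction of the extension step to the case $pg\in F$ with $g$ proper in its coset). The skeleton is right, but there is a genuine gap exactly at the point you flag and then leave unresolved: the existence of the correcting element is \emph{not} ``guaranteed by $u_\alpha(G)=u_\alpha(H)$'' in the way you assert. In the hard case ($pg$ of height $>\alpha+1$, or $pg=0$) the element $z\in H$ you need must have order $p$, height exactly $\alpha$, and be \emph{proper with respect to $F'$}, i.e.\ its class in the $\alpha$-th layer must be independent of the classes already occupied by $F'$; if $u_\alpha(H)$ is finite, that layer could a priori be exhausted by $F'$, so a ``matching nonzero contribution to $u_\alpha(G)$'' proves nothing by itself. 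The standard repair is a counting lemma for \emph{relative} Ulm invariants: since $\psi\colon F\to F'$ is height-preserving and $F$ is finite, $F$ and $F'$ occupy subspaces of equal finite dimension in each layer, so equality of the absolute invariants gives equality of the invariants relative to $F$ and $F'$; the $G$-side configuration (take $x$ with $px=pg$, $ht(x)\ge\alpha+1$, so that $g-x$ has order $p$, height $\alpha$, and is proper for $F$) then transfers to the $H$-side. Without this lemma the induction does not close, and the same lemma is what makes the verification of height-preservation on all of $F+\langle g\rangle$ go through.

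A second, smaller gap: your reduction to the reduced case does not work in the ``if'' direction as you state it, because $\lambda$ and the $u_\alpha$ defined in the paper are blind to the divisible part; for instance $\mathbb{Z}(p^{\infty})$ and $\mathbb{Z}(p^{\infty})\oplus\mathbb{Z}(p^{\infty})$ have identical (trivial) Ulm data but are not isomorphic. The classical theorem is for countable \emph{reduced} $p$-groups, or else one must add the rank of the divisible part to the list of invariants; so after splitting off $D(G)$ and $D(H)$ by Baer's theorem you need that extra hypothesis --- it does not follow from the displayed invariants. This is really an imprecision in the statement as the paper records it, but your write-up should make the added assumption explicit rather than claim the reduction is automatic.
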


\begin{defn} \begin{enumerate}
\item $\oplus_{\alpha} \HH$ denotes the direct sum of $\alpha$ copies of
$\HH$ where $\alpha \leq \omega$. 

\item If $\A = \oplus_{i<\omega} Z(p^{n_i})$, then the \emph{character}
of $\A$ is 
\[
\chi(\A) = \{(n,k): card(\{i: n_i=n\}) \geq k\}.
\]
\item If $\G = \A \oplus \oplus_{\alpha} Z(p^{\infty})$ for some $\alpha
  \leq \omega$ and some $\A$ as above, then $\chi(\G) = \chi(\A)$. 
\item We say that $\G$ has {\rm bounded character} if for some finite $b$ and all
  $(n,k) \in \chi(\G)$, $n \leq b$, and is said to have
  \emph{unbounded character} otherwise.   
\end{enumerate}
\end{defn}

In the previous paper \cite{C-C-H-M}, we studied a similar notion for
equivalence structures, and constructed structures of various
characters.  We will show that for a general class of
such structures, a corresponding $p$-group \emph{with the same
character} may be constructed from a given equivalence structure.
Here are the basic definitions for computable equivalence structures.

An equivalence structure $\mathcal{A}=(A,E^{\mathcal{A}})$ consists of
a set $A$
with a binary relation $E^\mathcal{A}$ that is reflexive, symmetric, and transitive. An
equivalence structure $\mathcal{A}$ is \emph{computable} if $A$ is a
computable subset of $\omega $ and $E^\mathcal{A}$ is a computable relation. If $A$ is
an infinite set (which is usual), we may assume, without loss of generality,
that $A=\omega $. The $\mathcal{A}$-equivalence class of $a\in A$ is 
\begin{equation*}
\lbrack a]^{\mathcal{A}}=\{x\in A:xE^{\mathcal{A}}a\}\text{.}
\end{equation*}
We generally omit the superscript $^{\mathcal{A}}$ when it can be inferred
from the context.

\begin{defn}
\begin{enumerate}
\item[$($i$)$] Let $\mathcal{A}$ be an equivalence relation. The
\emph{character of} $\mathcal{A}$ is the set
\begin{equation*}
\chi (\mathcal{A})=\{\langle n,k\rangle :\ n,k>0\ \text{and}\ {\mathcal{A}}\ 
\text{has}\ \text{at least $k$ equivalence classes of size}\ n\}.
\end{equation*}

\item[$($ii$)$] We say that $\mathcal{A}$ has \emph{bounded} \emph{character}
if there is some finite $n$ such that all finite equivalence classes
of $\mathcal{A}$ have size at most $n$.
\end{enumerate}
\end{defn}

For both groups and equivalence relations, we may define the notion of
a \emph{character} as a subset $K$ of $(\omega - \{0\}) \times \omega$
such that for all $k$ and $n$, $(n,k+1) \in K \Implies (n,k) \in
K$. Let $o_{\G}(g)$ be the order of $g$ in $\G$. The $\G$ may be
omitted when it is clear. 

\begin{theorem} [Khisamiev \cite{Kh98}] 
Suppose that $\G$ is a computable Abelian $p$-group and
isomorphic to $\oplus_{\alpha} Z(p^{\infty}) \oplus \oplus_{i<\omega}
Z(p^{n_i})$. Then: \begin{enumerate}
\item $\{\la g,n \ra: o(g) = p^n\}$ is computable,
\item $\{\la g,n \ra: ht(g) \geq n\}$ is $\Sigma^0_1$, 
\item $D(\G)$ is a $\Pi^0_2$ set (recall that $D(\G)$ is the divisible
  part of $\G$), and
\item the character $\chi(\G)$ is a $\Sigma^0_2$ set.
\end{enumerate}
\end{theorem}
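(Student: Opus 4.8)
The plan is to verify each of the four items in turn, working directly from the definitions of order, height, the divisible part, and the character, and using the fact that $\G$ is computable (so that $+^{\G}$, $-^{\G}$, and scalar multiplication by integers are all computable functions on $G=\omega$).

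\medskip

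For item (1): given $\la g,n\ra$, we compute $p^{n}g$ and $p^{n-1}g$ (scalar multiplication is computable since it is iterated addition), and check whether $p^{n}g = 0$ and $p^{n-1}g \neq 0$. Since $G$ is a $p$-group every element has order a power of $p$, so this exactly decides whether $o(g) = p^{n}$. Hence the set is computable. Item (2) is essentially already recorded in the discussion preceding the theorem: $ht(g)\geq n$ means $g\in p^{n}G$, i.e.\ there exists $x\in G$ with $p^{n}x = g$. Since "$p^{n}x = g$" is a computable relation of $\la g,n,x\ra$, the set $\{\la g,n\ra : ht(g)\geq n\}$ is the projection of a computable set, hence $\Sigma^{0}_{1}$.

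\medskip

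For item (3): here I would use Theorem~\ref{thm:cls}(2) (Pr\"ufer) and the structural hypothesis. Since $\G \cong \oplus_{\alpha} Z(p^{\infty}) \oplus \oplus_{i<\omega} Z(p^{n_{i}})$, the reduced part is a direct sum of cyclic groups, so every nonzero element of the reduced part has finite height, while every nonzero element of $D(\G)$ has infinite height. Thus $g\in D(\G)$ iff $ht(g) = \infty$ iff $g\in p^{n}G$ for all $n$, i.e.\ $g\in D(\G) \Iff \forall n\, \exists x\, (p^{n}x = g)$. The matrix $p^{n}x = g$ is computable, so this is a $\Pi^{0}_{2}$ condition on $g$, and $D(\G)$ is $\Pi^{0}_{2}$.

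\medskip

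Item (4) is the main obstacle and will require real work. The character is $\chi(\G) = \{(n,k) : \text{there are at least } k \text{ independent elements of order exactly } p^{n} \text{ in the reduced part, in a suitable basis sense}\}$; more precisely $(n,k)\in\chi(\G)$ iff $\mathrm{card}\{i : n_{i} = n\}\geq k$. The subtlety is that this is an invariant of $\G$ up to isomorphism, not something one reads off a single list of generators, so one must express it via Ulm-type invariants. The natural route is to relate $\mathrm{card}\{i : n_{i}=n\}$ to the dimension $u_{n-1}$-style quantity computed from $P(G) = \{x : px = 0\}$ together with the height function: an element of order $p^{n}$ in a cyclic summand $Z(p^{n})$ contributes, after multiplication by $p^{n-1}$, a nonzero socle element of height exactly $n-1$. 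So $(n,k)\in\chi(\G)$ iff the $\mathbb{Z}_{p}$-dimension of $(p^{n-1}G \cap P(G))/(p^{n}G\cap P(G))$ is at least $k$. Now "$x\in P(G)$" is computable, "$x\in p^{n-1}G$" is $\Sigma^{0}_{1}$ by item (2), and the assertion that some $k$ elements are linearly independent over $\mathbb{Z}_{p}$ modulo $p^{n}G\cap P(G)$ is a $\Sigma^{0}_{1}$ statement about a $\Sigma^{0}_{1}$-definable set quotiented by a $\Sigma^{0}_{1}$-definable subgroup. Carefully counting the quantifiers: we existentially quantify over a $k$-tuple $g_{1},\dots,g_{k}$ and over witnesses that each lies in $p^{n-1}G$ (one existential block), and then assert that no nontrivial $\mathbb{Z}_{p}$-combination lies in $p^{n}G$, which is a $\Pi^{0}_{1}$ condition (finitely many combinations, each requiring "$\notin p^{n}G$", a $\Pi^{0}_{1}$ fact by complementing item (2)). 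The whole thing is $\exists(\dots)\,\Pi^{0}_{1}$, i.e.\ $\Sigma^{0}_{2}$. The care needed is to make sure the independence-modulo-subgroup condition is phrased so that it genuinely certifies a lower bound on the dimension of the relevant Ulm-type factor and agrees with $\mathrm{card}\{i : n_{i}=n\}$; this is where I would invoke the classical theory from Fuchs~\cite{F70} (and the Ulm-invariant discussion above) rather than argue from scratch. Once that correspondence is pinned down, $\chi(\G)$ is $\Sigma^{0}_{2}$.
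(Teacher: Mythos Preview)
Your arguments for items (1)--(3) are essentially identical to the paper's. For item (4) you take a genuinely different route. The paper witnesses $(n,k)\in\chi(\G)$ by $k$ independent elements $g_0,\dots,g_{k-1}$ of order $p^n$ and height $0$, and for the converse argues that such elements generate a bounded pure subgroup, hence a direct summand by Kulikov's theorem (Theorem~\ref{thm:ds}), yielding $k$ copies of $Z(p^n)$. You instead work in the socle: you witness $(n,k)$ by $k$ elements of $P(G)\cap p^{n-1}G$ that are $\mathbb{Z}_p$-independent modulo $p^nG$, and reduce correctness to the standard identification of $\mathrm{card}\{i:n_i=n\}$ with the Ulm-type invariant $\dim_{\mathbb{Z}_p}(p^{n-1}G\cap P(G))/(p^nG\cap P(G))$. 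Both routes produce an $\exists\,\Pi^0_1$ description and hence the $\Sigma^0_2$ bound. Your approach trades the appeal to Kulikov for the Ulm-invariant calculation already set up in the paper's preliminaries; a concrete advantage is that your independence-mod-$p^nG$ condition transparently matches the invariant, whereas the paper's route needs the assertion that height-$0$ independent elements of order $p^n$ generate a \emph{pure} subgroup --- a point the paper passes over briskly and which in fact requires more than just $ht(g_i)=0$ (e.g.\ in $Z(p^3)\oplus Z(p)$ with generators $a,b$, the element $pa+b$ has order $p^2$ and height $0$, yet $\langle pa+b\rangle$ is not pure). Your socle formulation sidesteps this subtlety entirely.
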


\begin{proof} (1) $o(g) = p^n \Iff p^n \cdot g = 0\ \&\ p^{n-1}\cdot g
  \neq 0$.

\medskip

(2) $ht(g) \geq p^n \Iff (\exists h) \left(p^n \cdot h = g\right)$.

\medskip

(3) Under the hypothesis, $ht(g) \geq \omega$ implies that $g \in
  D(\G)$, so that $g \in D(\G) \Iff (\forall n) \left(ht(g) \geq n\right)$. 

\medskip

(4) We have the following characterization of $\chi(\G)$ by Theorem
    \ref{thm:ds}: $\la n,k \ra \in \chi(\G)$ if and only if there
    exist $g_0,\dots,g_{k-1}$ such that for all $i<k$, $o(g) = p^n$
    and $ht(g) = 0$ and
\[
(*) (\forall c_0,c_1,\dots,c_{k-1} < p^n) [c_0 \cdot g_0 + \cdots +
    c_{k-1}\cdot g_{k-1} = 0 \Rightarrow (\forall i<k) (c_i=0)].
\]
That is, if $\la n,k \ra \in \chi(\G)$, then $\G$ has at least $k$
summands $\la g_1 \ra$, \dots, $\la g_{k-1} \ra$ isomorphic to $Z(p^n)$
and the sequence $g_1,\dots,g_{k-1}$ satisfies (*). On the other
hand, if $g_0,\dots,g_{k-1}$ satisfy (*), then they
generate a bounded, pure subgroup $\A$ of $\G$, which must be a summand
by Theorem \ref{thm:ds}. Hence $\G$ has at least $k$ summands of the
form $Z(p^n)$.
\end{proof}

Here is a connection between equivalence structures and Abelian $p$-groups.

\begin{theorem} \label{thm:etog} Let $p$ be a prime number and let $\A$ be a 
computable equivalence structure with character $K$ and with $\alpha$
infinite equivalence classes. We write $Inf(\A)$ to denote the set of
elements in $\A$ whose equivalence classes are infinite.  Then there
exists a computable $p$-group $\G$ isomorphic to \[\HH {\oplus}
\bigoplus\limits_{\alpha} Z(p^\infty),\] where $\HH$ is a direct sum of cyclic
$p$-groups with character $K$. Furthermore, if $Inf(\A)$ is
$\Sigma^0_1$, then $D(\G)$ is also $\Sigma^0_1$, and if $Inf(\A)$ is computable, then $D(\G)$ is also computable. \end{theorem}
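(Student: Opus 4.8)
## Proof Proposal

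The plan is to build the group $\G$ directly from the equivalence structure $\A$ by replacing each equivalence class with a cyclic (or quasicyclic) $p$-group of the corresponding ``size,'' and then to verify that the construction is computable and that the complexity of $D(\G)$ is controlled by the complexity of $Inf(\A)$. Concretely, for each equivalence class $C$ of $\A$ I would associate a summand: a class of finite size $n$ contributes a copy of $Z(p^n)$, and an infinite class contributes a copy of $Z(p^\infty)$. The resulting abstract group is $\HH \oplus \bigoplus_\alpha Z(p^\infty)$ where $\HH = \bigoplus_{C \text{ finite}} Z(p^{|C|})$, which by the definition of character has character exactly $K = \chi(\A)$, and which has exactly $\alpha$ quasicyclic summands. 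So the isomorphism-type requirement is immediate once the construction is carried out; the work is in making a \emph{computable} copy with the stated effectivity.

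The main technical step is the computable construction. I would fix an effective enumeration of $\A$ that reveals, stage by stage, which pairs lie in $E^\A$. At each stage I maintain a finite partial group on a finite subset of $\omega$, together with a tentative assignment of each enumerated element of $\A$ to a ``generator'' of its summand. The subtlety is that at a finite stage one cannot tell whether a given class of apparent size $m$ is actually finite (and will stay size $m$) or will later grow, possibly to infinity. The standard device — used in the equivalence-structure setting of \cite{C-C-H-M} and going back to the techniques behind Theorem~\ref{thm:cls} — is to build each summand as an increasing union of finite cyclic groups $Z(p) \leq Z(p^2) \leq \cdots$, committing to the direct-limit structure of $Z(p^\infty)$, but only ``activating'' the next level $Z(p^{k+1})$ for a given class when a new element of that class appears in $\A$. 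If the class turns out to have final size $n$, the summand stabilizes as $Z(p^n)$; if the class is infinite, the summand grows to $Z(p^\infty)$. One must check that the group operation so defined is computable: this is routine bookkeeping, since at any point only finitely many generators and finitely many levels are active, and membership and addition are decided by the finite partial data. This gives a computable $\G$ of the required isomorphism type.

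The final step is the complexity transfer for $D(\G)$. By construction, an element $g \in \G$ lies in $D(\G)$ precisely when $g$ lies in one of the $Z(p^\infty)$ summands, i.e.\ when the class of $\A$ from which $g$'s summand was built is infinite. More carefully, $g$ belongs to the divisible part iff every ``coordinate'' of $g$ (with respect to the summand decomposition we are building) lies in an infinite-class summand; since $g$ is a fixed element with finitely many nonzero coordinates, this reduces to a finite conjunction of statements of the form ``class $C_j$ is infinite,'' which is exactly ``the element witnessing coordinate $j$ lies in $Inf(\A)$.'' Hence there is a computable reduction from $D(\G)$ to (a finite-conjunction version of) $Inf(\A)$: if $Inf(\A)$ is $\Sigma^0_1$ then a finite conjunction of its instances is $\Sigma^0_1$, so $D(\G)$ is $\Sigma^0_1$; if $Inf(\A)$ is computable then $D(\G)$ is computable. (Note this is consistent with the general $\Pi^0_2$ bound from Khisamiev's theorem, obtained by taking $Inf(\A)$ to be merely $\Pi^0_2$ — ``the class of $a$ is infinite'' is $\Pi^0_2$ in general.)

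I expect the genuine obstacle to be bookkeeping the summand decomposition coherently: one needs the assignment of $\A$-elements to generators and levels to be done so that (a) the partial group at each stage is literally a subgroup of the intended limit group, (b) the operation extends computably, and (c) the ``$g \in D(\G)$ iff $g$'s coordinates come from infinite classes'' equivalence is provably correct for \emph{all} $g \in G$, not just the obvious generators. Part (c) requires knowing that the computable copy we built really does decompose as the direct sum we intended — which follows because we built it that way — but it should be stated carefully, perhaps by exhibiting the explicit projection maps onto the summands as computable functions. Once that structure is in hand, the complexity claims fall out immediately.
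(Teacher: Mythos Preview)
Your proposal is correct and follows essentially the same approach as the paper: build each summand as an increasing tower $Z(p)\subset Z(p^2)\subset\cdots$, extending the $i$th summand by one level each time a new element of the $i$th equivalence class appears, and then read off membership in $D(\G)$ as a finite conjunction of $Inf(\A)$-queries on the nonzero coordinates. The paper's write-up is slightly more explicit about the staging (choosing least representatives $b_i$, arranging $\G^s$ as an initial segment of $\omega$), but the mathematical content is the same.
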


\begin{proof} Let $\A$, $\alpha$ and $K$ be given as stated and let
  $\equiv$ denote $\equiv^{\mathcal A}$.  First define the computable set $B$
  of basic elements of $\G$ to consist of $\{a \in A: (\forall m<a)
  \neg (m \equiv a)\}$. Then enumerate $B$ in numerical order as $b_0,
  b_1, \dots$. The group $\G$ will be the direct sum of $\G_i$, where
  the groups $\G_i$ and $\G$ are constructed in stages $\G_i^s$.
  Initially, $\G_0^0 = \{0,1,\dots,p-1\}$ is a copy of $Z_p$ and in
  general, $\G_i^s$ is a copy of $Z(p^k)$, where $k = card(\{j<s: j
  \equiv b_i\})$.  Now $\G^s$ is isomorphic to the direct sum $\G_0^s
  \oplus \G_1^s \oplus \cdots \oplus G_s^s$.

At stage $s+1$, we initiate the component group $G_{s+1}^{s+1}$ and
 for each $i \leq s$, we check whether $s+1 \neq b_i$ and $s+1 \equiv
b_i$. If not, just let $\G_i^{s+1} = \G_i^s$. If so, then extend
$\G_i^s$ to a copy of $Z(p^{k+1})$, where $G_i^s$ is a copy of
$Z(p^k)$. That is, given that $G_i^s$ is a cyclic group of order $p^k$
with generator $a$, we put a new element $b$ into $G_i^{s+1}$  so that
$p \cdot b = a$, and also add elements to represent $i \cdot b + g$ for
$i = 1,2,\dots,p-1$ and $g \in G_i^s$. Then we also add elements to
 $\G$ to represent the new elements of $G_0^{s+1} \oplus \dots \oplus
 G_{s+1}^{s+1}$. This is done so that the elements of $\G^{s+1}$ are
 an initial segment of $\omega$. 

$\G$ will be a computable group, since for each $a \in \omega$, $a \in
 \G^a$ and for any two elements $a \leq b$, $a +^{\G} b$ is defined by
 stage $b$. 

It is clear that if $card([b_i])=n$ in $\A$, then for some $s$ and all
$t \geq s$, $\G_i^t$ is isomorphic to $Z(p^n)$, and if $card([b_i])=\omega$
in $\A$, then the inverse limit $\G_i$ of $\la G_i^s: s < \omega \ra$
  will be a copy of $Z(p^{\infty})$. Thus $\G$ has character $K$ and
  has $\alpha$ components of $Z(p^{\infty})$, as desired. 

For the furthermore clause, suppose that $Inf(\A)$ is a $\Sigma^0_1$
(respectively, computable) set. Then $\{i: b_i \in Inf(\A)\}$ is also
$\Sigma^0_1$ (respectively, computable). Now a sequence $\la c_0,\dots,c_{k-1}
\ra \in \G$ is in $D(\G)$ if and only if for all $i<k$, if $c_i \neq
0$, then $i \in Inf(\A)$.
\end{proof}

There are several corollaries to results from \cite{C-C-H-M}. 

\begin{cor} \label{cor1} For any $\Sigma _{2}^{0}$ character $K$, there is a computable
Abelian $p$-group $\G$ with character $K$ and with $D(\G)$ isomorphic
to $\oplus_{\omega} Z(p^{\infty})$. Furthermore, the domain of $D(\G)$ is a $\Sigma_{1}^{0}$ set.
\end{cor}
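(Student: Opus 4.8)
The plan is to derive Corollary~\ref{cor1} directly from Theorem~\ref{thm:etog} together with the construction of equivalence structures from the companion paper~\cite{C-C-H-M}. The key point is that Theorem~\ref{thm:etog} converts an equivalence structure into a $p$-group \emph{preserving the character}, and it upgrades the complexity of $Inf(\A)$ into the complexity of $D(\G)$. So it suffices to produce, for any given $\Sigma^0_2$ character $K$, a computable equivalence structure $\A$ with character $K$, with infinitely many (i.e.\ $\alpha = \omega$) infinite equivalence classes, and with $Inf(\A)$ a $\Sigma^0_1$ set.

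First I would invoke the relevant construction from \cite{C-C-H-M}: given a $\Sigma^0_2$ set $K$ (which is automatically a legitimate character once we arrange the monotonicity condition $(n,k+1)\in K \Rightarrow (n,k)\in K$, or simply assume $K$ already satisfies it), there is a computable equivalence structure $\A$ whose character is exactly $K$ and which additionally has infinitely many infinite classes arranged so that $Inf(\A)$ is c.e. The standard way to do this is to fix a $\Sigma^0_2$ approximation to $K$ and build the finite classes by a stagewise procedure that enlarges a class whenever a new element enters the relevant approximation, while simultaneously reserving a c.e.\ collection of designated "infinite" classes into which fresh elements are poured cofinally often; membership in such a class is then a c.e.\ event, giving $Inf(\A) \in \Sigma^0_1$. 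Then I would feed this $\A$ into Theorem~\ref{thm:etog}, obtaining a computable $p$-group $\G \cong \HH \oplus \bigoplus_\omega Z(p^\infty)$ with $\HH$ a direct sum of cyclic $p$-groups of character $K$, hence $\chi(\G) = K$; and since $Inf(\A)$ is $\Sigma^0_1$, the "furthermore" clause of Theorem~\ref{thm:etog} gives that $D(\G)$ is $\Sigma^0_1$. Finally, because $\G$ has exactly $\alpha = \omega$ quasicyclic summands, $D(\G) = p^{lh(G)}G \cong \bigoplus_\omega Z(p^\infty)$, which is precisely the claimed form of the divisible part, and the domain of $D(\G)$ is the $\Sigma^0_1$ set just described.

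The main obstacle I anticipate is the bookkeeping in the equivalence-structure construction: one must simultaneously (i) hit the character $K$ on the nose — not overshoot by accidentally creating an extra class of some size $n$ because a $\Sigma^0_2$ approximation retracts, which requires the usual technique of only committing a class to final size $n$ once the approximation to $(n,k)\in K$ has stabilized, and otherwise letting the class keep growing; and (ii) keep $Inf(\A)$ genuinely c.e., which forces the "infinite" classes to be declared in advance or via a c.e.\ list rather than being recognized in the limit. Reconciling these — a class that is still growing might be destined to be finite or infinite, and we need the infinite ones to be c.e.-recognizable — is the delicate part, but it is exactly the content of the construction already carried out in \cite{C-C-H-M}, so here we may cite it. Everything downstream (transferring character and divisibility complexity through the functor of Theorem~\ref{thm:etog}) is routine.
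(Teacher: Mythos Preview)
Your proposal is correct and follows essentially the same approach as the paper: cite the construction from \cite{C-C-H-M} (there stated as Lemma~2.3) to obtain a computable equivalence structure $\A$ with character $K$, infinitely many infinite classes, and $Inf(\A)\in\Sigma^0_1$, and then apply Theorem~\ref{thm:etog}. The paper's proof is a two-sentence citation; your additional discussion of the bookkeeping in the equivalence-structure construction is accurate but unnecessary here, since that work is entirely delegated to \cite{C-C-H-M}.
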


\begin{proof} By Lemma 2.3 of \cite{C-C-H-M}, there is a
  computable equivalence structure $\A$ with character $K$ such that
  $Inf(\A)$ is $\Sigma^0_1$.  The result now follows immediately from
  Theorem \ref{thm:etog}.
\end{proof}

\begin{cor} \label{cor2} For any $r\leq \omega $ and any bounded character $K$, 
there is a computable Abelian $p$-group $\G$ with character $K$ and with
$D(\G)$ isomorphic to $\oplus_r Z(p^{\infty})$. Furthermore, the
domain of
$D(\G)$ is a computable set.
\end{cor}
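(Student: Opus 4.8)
The plan is to mimic the construction in Theorem \ref{thm:etog}, but feed it an equivalence structure that is better-behaved: since $K$ is a bounded character, it should be possible to choose a \emph{computable} equivalence structure $\A$ realizing $K$ in which $Inf(\A)$ is computable, and moreover to control the number of infinite classes to be exactly $r$. First I would invoke the analogue of Lemma 2.3 of \cite{C-C-H-M} for bounded characters: because all finite classes have size bounded by some fixed $b$, one can lay out the finite classes directly (each class appears as a block of consecutive integers as dictated by $K$) and then append $r$ infinite classes, built so that an element lies in an infinite class iff its least representative is one of the finitely-or-$\omega$-many designated ``infinite'' basic elements; with the block structure explicit, membership in $Inf(\A)$ is decidable. (If $r=\omega$ one interleaves the infinite-class blocks with the finite ones in a computable pattern so that the whole structure is still computable and $Inf(\A)$ still computable.)

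Next I would apply Theorem \ref{thm:etog} to this $\A$, $r$, and $K$. That theorem hands us a computable $p$-group $\G \cong \HH \oplus \bigoplus_r Z(p^\infty)$ with $\HH$ a direct sum of cyclic $p$-groups of character $K$, hence $\chi(\G)=K$ and $D(\G) \cong \bigoplus_r Z(p^\infty)$. The ``furthermore'' clause of Theorem \ref{thm:etog} says precisely that if $Inf(\A)$ is computable then the domain of $D(\G)$ is computable, which is exactly the strengthening Corollary \ref{cor2} asks for over Corollary \ref{cor1}. So once the input equivalence structure is produced, the proof is a one-line citation of Theorem \ref{thm:etog}, exactly parallel to the proof of Corollary \ref{cor1}.

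The main obstacle is the first step: verifying that for a bounded character $K$ — and for every $r \le \omega$ simultaneously — there really is a computable equivalence structure with character $K$, with exactly $r$ infinite classes, and with $Inf(\A)$ computable. The existence with $Inf(\A)$ computable is where boundedness is essential (for unbounded $K$ one cannot in general do better than $\Sigma^0_1$, which is why Corollary \ref{cor1} only claims $\Sigma^0_1$); the point is that with a finite bound $b$ on class sizes, the pattern of finite classes prescribed by $K$ can be written down on an initial segment in a computable way, leaving the remaining integers free for the $r$ infinite classes. I would either cite the corresponding construction from \cite{C-C-H-M} for bounded characters if it is stated there, or sketch it directly as above. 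Everything after that — character preservation, the number of $Z(p^\infty)$ summands, and computability of $D(\G)$ — is delivered verbatim by Theorem \ref{thm:etog}.
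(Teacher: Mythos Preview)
Your proposal is correct and follows exactly the paper's approach: the paper's proof is the two-line argument you anticipate, citing Lemma~2.4 of \cite{C-C-H-M} (not an analogue of Lemma~2.3) for the existence of a computable equivalence structure with bounded character $K$, exactly $r$ infinite classes, and computable $Inf(\A)$, and then invoking Theorem~\ref{thm:etog}. Your sketch of why boundedness gives computable $Inf(\A)$ is a reasonable reconstruction of that lemma, but since the result is already stated in \cite{C-C-H-M} you can simply cite it.
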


\begin{proof} By Lemma 2.4 of \cite{C-C-H-M}, there exists a
  computable equivalence structure $\A$ with character $K$, with
  exactly $r$ equivalence classes, and with $Inf(\A)$ computable. The
  result now follows from Theorem \ref{thm:etog}.
\end{proof}

\bigskip

If the character is not bounded, then the notions of an $s$-function
and an $s_{1}$-function are important. These functions were introduced
by Khisamiev in \cite{khis92}. The $s$-functions are called
\emph{limitwise monotonic} in \cite{KNS97}.

\begin{defn}
Let $f:\omega ^{2}\rightarrow \omega $. The function $f$ is an $s$-\emph{%
function} if the following hold:

\begin{enumerate}
\item For every $i$ and $s$, $f(i,s)\leq f(i,s+1)$;

\item For every $i$, the limit $m_i=lim_s f(i,s)$ exists.

\medskip

We say that $f$ is an $s_{1}$-\emph{function} if, in addition:

\item For every $i$, $m_{i}<m_{i+1}$.
\end{enumerate}
\end{defn}

The following result about characters, $s$-functions and
$s_1$-functions is immediate from Lemma 2.6 of \cite{C-C-H-M}.

\begin{lemma}
\label{l3} Let $\G$ be a computable Abelian $p$-group with infinite character
and with $D(\G)$ isomorphic to $\oplus_r Z(p^{\infty})$ where $r$ is
finite. Then 

\begin{enumerate}
\item There exists a computable $s$-function $f$ with corresponding
limits $m_{i}=lim_{s}f(i,s)$ such that $\langle n,k\rangle \in
\chi(\G)$ if and only if  $card(\{i:n=m_{i}\})\geq k.$

\item If the character is unbounded, then there is a
  computable $s_{1}$-function $f$ such that $\la m_i,1\ra \in
  \chi(\G)$ for all $i$.
\end{enumerate}
\end{lemma}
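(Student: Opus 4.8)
The plan is to transfer both parts to the already-established statement for equivalence structures, Lemma 2.6 of \cite{C-C-H-M}. As noted in the excerpt, a character is literally the same kind of object---a downward-closed subset of $(\omega\setminus\{0\})\times\omega$---whether it arises from a $p$-group or from an equivalence structure, so it is enough to exhibit a computable equivalence structure $\mathcal{A}$ with $\chi(\mathcal{A})=\chi(\G)$ and with only finitely many infinite classes, and then feed $\mathcal{A}$ to Lemma 2.6 of \cite{C-C-H-M}. That lemma then returns a computable $s$-function whose limits $m_i$ satisfy $\langle n,k\rangle\in\chi(\mathcal{A})\Iff card(\{i:m_i=n\})\geq k$, and, when $\chi(\mathcal{A})$ is unbounded, a computable $s_1$-function with $\langle m_i,1\rangle\in\chi(\mathcal{A})$ for all $i$; since $\chi(\mathcal{A})=\chi(\G)$, these are exactly the asserted conclusions (1) and (2).

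First I would observe that $\chi(\G)$ is a $\Sigma^0_2$ set: this is part (4) of Khisamiev's theorem above, applicable because, for $\chi(\G)$ to be defined at all, $\G$ is (up to isomorphism) of the form $\bigoplus_i Z(p^{n_i})\oplus\bigoplus_r Z(p^\infty)$ with the given finite $r$. By hypothesis $\chi(\G)$ is also infinite, and in part (2) unbounded. Next I would realize this $\Sigma^0_2$ character by a computable equivalence structure: by Lemma 2.3 of \cite{C-C-H-M} (used already for Corollary \ref{cor1}), or an easy variant of it, there is a computable equivalence structure $\mathcal{A}$ with $\chi(\mathcal{A})=\chi(\G)$, and the construction can be arranged so that $\mathcal{A}$ has no infinite classes at all (or exactly $r$ of them, to match $\G$), since the prescribed finite classes already realize the character. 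Then I would apply Lemma 2.6 of \cite{C-C-H-M} and read off (1) and (2) as in the previous paragraph.

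The only point that is not purely formal is the coordination between ``finitely many infinite classes'' and the hypotheses of Lemma 2.6 of \cite{C-C-H-M}; equivalently, if one bypasses the detour through $\mathcal{A}$ and reads the $s$-function off $\G$ directly, the work is in proving that the natural approximation converges on every argument. This is where the hypothesis $D(\G)\cong\oplus_r Z(p^\infty)$ with $r$ \emph{finite} is used: the elements of infinite height in the socle $\{x\in G:px=0\}$ span only an $r$-dimensional $\mathbb{Z}_p$-subspace, so an approximation that runs over a maximal linearly independent family of socle elements---using part (1) of Khisamiev's theorem to compute orders and its part (2) to approximate heights, assigning to each chosen element a class (respectively, a slot) whose size is one more than the current height approximation---has at most $r$ divergent arguments, and these can be absorbed by the standard marker-shifting device used in the proof of Lemma 2.6 of \cite{C-C-H-M}. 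For part (2), passing from an $s$-function to an $s_1$-function is the same thinning-out step as in \cite{C-C-H-M}, and uses unboundedness of the character to keep the limits strictly increasing. Apart from this bookkeeping the deduction is immediate, as claimed in \cite{C-C-H-M}.
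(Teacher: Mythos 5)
Your main route has a genuine gap, and it is the crucial one. Lemma 2.3 of \cite{C-C-H-M} produces, for a given $\Sigma^0_2$ character $K$, a computable equivalence structure with character $K$ and \emph{infinitely many} infinite classes, and the infinite classes are not an optional feature of that construction: a computable equivalence structure with character $K$ and no (or finitely many) infinite classes exists precisely when $K$ is captured by an $s$-function (respectively, an $s_1$-function), and by the Khoussainov--Nies--Shore example \cite{KNS97} there are unbounded $\Sigma^0_2$ characters admitting no such function, hence not realizable with only finite classes. So your ``easy variant'' of Lemma 2.3 (``the construction can be arranged so that $\A$ has no infinite classes at all, since the prescribed finite classes already realize the character'') is false for general $\Sigma^0_2$ characters, and for $K=\chi(\G)$ its truth is exactly equivalent to the conclusion of Lemma \ref{l3}: the whole point of the lemma is that computability of $\G$ together with $D(\G)\cong\oplus_r Z(p^\infty)$, $r$ finite, forces $\chi(\G)$ to be of this special kind. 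The proposed reduction is therefore circular; note also that the paper's transfer result (Theorem \ref{thm:etog}) goes from equivalence structures to groups, not from groups to equivalence structures, so there is no ready-made bridge in that direction.

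Your closing paragraph does locate the real work---reading the $s$-function off $\G$ directly from order and height approximations, with the $r$-dimensional divisible part of the socle playing the role of the finitely many infinite classes---and this is in the spirit of what the paper intends when it calls the lemma immediate from Lemma 2.6 of \cite{C-C-H-M}; it is essentially Khisamiev's argument \cite{Kh98}. But as written it is only a gesture. A maximal independent family of elements of $G[p]=\{x\in G: p\cdot x=0\}$ need not have heights matching the summand sizes: in $Z(p)\oplus Z(p^2)$ with generators $a,b$, the family $\{a,\,a+p\cdot b\}$ consists of two elements of height $0$, although the character records one summand of each size. One must instead count via a family adapted to the height filtration, i.e.\ via the dimensions of $(p^{n-1}G\cap G[p])/(p^{n}G\cap G[p])$, and the adapted family is itself only a limit of stagewise choices; turning this into a \emph{total} limitwise monotonic function whose limit multiset equals the character exactly is the nontrivial step. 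Moreover the at most $r$ infinite-height directions cannot be computably recognized ($D(\G)$ is only $\Pi^0_2$), so ``absorbed by the standard marker-shifting device'' is precisely the claim that requires proof. Either carry out that construction in detail or cite Khisamiev's theorem on the existence of $s$- and $s_1$-functions for computable $p$-groups whose divisible part has finite rank; as it stands, the proposal does not close the argument.
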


\begin{cor} \label{cor3} Let $K$ be a $\Sigma _{2}^{0}$ character, and let $r$ be finite.
\begin{enumerate}
\item Let $f$ be a computable $s$-function with the corresponding
limits $m_{i}=lim_{s}f(i,s)$ such that \[\langle k,n\rangle \in K \iff
card(\{i:n=m_{i}\})\geq k.\] Then there is a computable Abelian
$p$-group $\G$ with $\chi(\G)=K$ and with $D(\G)$ isomorphic to
$\oplus_r Z(p^{\infty})$.

\item Let $f$ be a computable $s_{1}$-function with corresponding
limits $m_{i}=lim_{s}f(i,s)$ such that $\langle m_{i},1\rangle \in K$ for
all $i$. Then there is a computable Abelian $p$-group $\G$ with 
$\chi (\G)=K$ and $D(\G)$ isomorphic to
$\oplus_r Z(p^{\infty})$.
\end{enumerate}
\end{cor}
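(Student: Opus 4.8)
The plan is to reverse the construction in Theorem~\ref{thm:etog}, or rather to invoke the previous paper's machinery directly, since Corollary~\ref{cor3} is explicitly advertised as a consequence of Lemma~\ref{l3} together with prior work. For part (1), I would start from the given computable $s$-function $f$ with limits $m_i$ satisfying $\langle k,n\rangle \in K \iff card(\{i : n = m_i\}) \geq k$. The idea is to build a computable equivalence structure $\A$ whose basic elements $b_0, b_1, \dots$ have the property that $card([b_i])$ grows according to $f$: specifically, arrange that at stage $s$ the class of $b_i$ has size (roughly) $f(i,s)$, by enumerating new elements into $[b_i]$ whenever $f(i,s+1) > f(i,s)$. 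Since $f$ is a computable $s$-function, its limits $m_i$ exist, so each class $[b_i]$ stabilizes at size $m_i$, and the character of $\A$ is exactly $K$ by the defining equivalence. Moreover, every class is finite (each $m_i < \omega$), so $Inf(\A) = \emptyset$ is computable. Then Theorem~\ref{thm:etog} hands us a computable $p$-group $\G$ isomorphic to $\HH \oplus \bigoplus_0 Z(p^\infty) = \HH$ with $\HH$ a direct sum of cyclics of character $K$ — but this gives $r=0$, not arbitrary finite $r$.

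To get the divisible part $\oplus_r Z(p^\infty)$ for finite $r > 0$, I would simply take the $\G$ just constructed and form $\G' = \G \oplus \bigoplus_r Z(p^\infty)$, where $\bigoplus_r Z(p^\infty)$ has a standard computable presentation on a computable (indeed initial) segment of $\omega$; the direct sum of two computable groups on computable domains is again computable, and one checks $D(\G') = D(\G) \oplus \bigoplus_r Z(p^\infty) = \{0\} \oplus \bigoplus_r Z(p^\infty) \cong \bigoplus_r Z(p^\infty)$, while $\chi(\G') = \chi(\G) = K$ since adjoining quasicyclic summands does not change the character. Alternatively — and this is probably the intended route — one feeds into Theorem~\ref{thm:etog} an equivalence structure $\A$ with $r$ infinite classes in addition to the finite classes coded by $f$; then $Inf(\A)$ can be made computable (it is just the union of those $r$ classes, which we enumerate in a controlled way), and Theorem~\ref{thm:etog} directly produces $\G$ with $D(\G) \cong \bigoplus_r Z(p^\infty)$ and $D(\G)$ computable. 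Either way the verification is routine once $\A$ is in hand.

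For part (2), the argument is the same but simpler to state: given a computable $s_1$-function $f$ with $\langle m_i, 1\rangle \in K$ for all $i$, the limits $m_i$ are strictly increasing, so the equivalence structure $\A$ built from $f$ as above has, for each $i$, exactly one class of size $m_i$, and these sizes are all distinct. Hence $\chi(\A)$ consists exactly of the pairs $\langle m_i, 1\rangle$ (together with their downward closure in the second coordinate, which is trivial here), and by hypothesis this set is contained in $K$; to get $\chi(\A) = K$ one should note that an $s_1$-function character and an arbitrary $\Sigma^0_2$ character with the stated property must in fact coincide on the relevant pairs — here I would appeal to the precise statement of Lemma~2.6 of \cite{C-C-H-M}, which is what Lemma~\ref{l3}(2) summarizes, to ensure the match is exact rather than merely one-sided. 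Then append $r$ infinite classes as before and apply Theorem~\ref{thm:etog}.

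The main obstacle I anticipate is bookkeeping in the construction of $\A$ from $f$ so that (a) $\A$ is genuinely computable with domain an initial segment of $\omega$, (b) the sizes track $f(i,s)$ closely enough that the limits are exactly the $m_i$ with the right multiplicities, and (c) when $r$ infinite classes are added, $Inf(\A)$ remains computable — this last point requires that we commit, at the stage a basic element $b_i$ is designated "infinite," to always feeding it new elements, so membership in $Inf(\A)$ is decidable by looking at finitely much data. None of these is deep, but getting the $\Sigma^0_2$-character versus $s$-function-character bookkeeping to line up exactly (part (2) especially) is the step most likely to need care, and it is precisely there that citing Lemma~2.6 of \cite{C-C-H-M} via Lemma~\ref{l3} does the real work.
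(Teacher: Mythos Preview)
Your overall strategy --- build a suitable computable equivalence structure, then feed it through Theorem~\ref{thm:etog} --- is exactly the paper's approach. The paper simply cites Lemma~2.8 of \cite{C-C-H-M} (not 2.6) for the existence of the required equivalence structures, and your sketch for part~(1), together with the idea of adjoining $r$ computably controlled infinite classes, is an acceptable unpacking of that citation.

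Part~(2), however, has a genuine gap. The hypothesis only says $\langle m_i,1\rangle \in K$ for all $i$; it does \emph{not} say that $K$ equals $\{\langle m_i,1\rangle : i\in\omega\}$. The character $K$ may contain pairs $\langle n,k\rangle$ with $k>1$, or with $n$ not among the $m_i$ at all. So building $\A$ from $f$ alone, as you propose, yields $\chi(\A)=\{\langle m_i,1\rangle:i\}\subseteq K$, and in general this containment is strict. Your attempted repair via Lemma~\ref{l3}(2) (Lemma~2.6 of \cite{C-C-H-M}) is in the wrong direction: that lemma extracts an $s_1$-function from a given group; it says nothing about forcing an arbitrary $\Sigma^0_2$ character $K$ to coincide with the one generated by $f$.

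The correct construction (Lemma~2.8 of \cite{C-C-H-M}, the lemma the paper actually invokes) uses \emph{both} $K$ and $f$: one works with a $\Sigma^0_2$ approximation to $K$, creating classes that attempt to realize each pair $\langle n,k\rangle$ currently believed to be in $K$. When the approximation withdraws a pair, the affected class must be grown further; the role of the $s_1$-function is to guarantee that there is always some $m_j$ exceeding the class's current size with $\langle m_j,1\rangle$ genuinely in $K$, so the injured class can be parked at size $m_j$ without damaging the character. This is precisely why the $s_1$-function hypothesis is needed even though it does not determine $K$, and it is the missing idea in your treatment of part~(2).
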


\begin{proof}
These results follow from Theorem \ref{thm:etog} and from Lemma 2.8 of
\cite{C-C-H-M} where corresponding equivalence structures are
constructed.
\end{proof}

\section{Categoricity of Abelian $p$-Groups}

The computably categorical Abelian $p$-groups were characterized by
Goncharov \cite{G1} and Smith \cite{Smi81} as follows.

\begin{theorem} [Goncharov, Smith] \label{thm:smi} A computable Abelian $p$-group
$\G$ is computably categorical if and only if either
\begin{enumerate}
\item $\G \approx \oplus_{\alpha} Z(p^{\infty}) \oplus \mathcal{F}$,
  where $\alpha \leq \omega$, or
\item $\G \approx \oplus_r Z(p^{\infty}) \oplus \oplus_{\omega} Z(p^m)
  \oplus \mathcal{F}$, where $\mathcal{F}$ is a finite Abelian $p$-group and $r,m \in
  \omega$.  
\end{enumerate}
\end{theorem}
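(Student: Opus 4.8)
The plan is to prove the two directions separately: \emph{sufficiency} of (1) and (2) by exhibiting a formally c.e.\ Scott family of existential formulas (whence computable categoricity follows, a fortiori, from the equivalence recorded above between relative computable categoricity and the existence of such a family), and \emph{necessity} by contraposition, building for every $\G$ outside the list two computable copies admitting no computable isomorphism.

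For sufficiency the key structural fact is that in both forms $D(\G)$ is uniformly existentially definable. If $\F$ has period $p^{N_0}$ and we set $N=\max\{N_0,m\}$ in case (2) and $N=N_0$ in case (1), then $p^{N}\G=D(\G)$, so for every $g\in G$ we have $g\in D(\G)\Iff(\exists y)(p^{N}y=g)\Iff ht(g)=\infty$. Fix a finite tuple $\ovg$ of parameters generating a copy of $\F$ inside $\G$; by Baer's splitting theorem (Theorem \ref{thm:cls}) and Ulm's theorem this copy is unique up to automorphism of $\G$. For a finite tuple $\ova$ I would form the existential formula $\psi_{\ova}$ recording the orders $o(a_i)$, all $\Z$-linear relations and non-relations among the entries of $\ova$ and $\ovg$ (a quantifier-free condition, and only finitely much since the orders are fixed), and, for each $\Z$-combination $b$ of these entries, the largest $j\leq N$ with $(\exists y)(p^{j}y=b)$. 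The family $\{\psi_{\ova}\}$ is c.e.\ and every tuple satisfies its own formula, so the substance of the argument is clause (ii): two tuples realizing the same data are automorphic. I would check this by a back-and-forth, extending the well-defined isomorphism $a_i\mapsto a_i'$ of the generated subgroups first across the divisible hull of $\la\ova\ra$ inside $D(\G)$ --- using that $\text{Aut}(Z(p^{\infty}))$ acts transitively on the elements of each fixed order and that $\oplus_{\alpha}Z(p^{\infty})$ absorbs every finite divisible subgroup as a direct summand (Baer, Theorem \ref{thm:ds}) --- and then, in case (2), across the homogeneous part $\oplus_{\omega}Z(p^{m})$, where cyclic summands of order dividing $p^{m}$ with prescribed heights below $m$ are always available because infinitely many copies of $Z(p^{m})$ remain unused; it is precisely this homogeneity that makes (2) work and forbids two distinct exponents from recurring infinitely often. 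Once (ii) holds, the Ash--Knight--Manasse--Slaman/Chisholm theorem recorded above yields that $\G$ is relatively computably categorical, hence computably categorical. (Alternatively one can bypass the Scott-family machinery and run the back-and-forth directly between two computable copies $\G_1,\G_2$, matching basic generators of $\G_1$ to elements of $\G_2$ with the same existential data; the search is effective because $D(\G)$ is $\Sigma^0_1$.)

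For necessity, argue by contraposition. Given a computable $\G$, write $\G=\A\oplus D(\G)$ with $\A$ reduced (Baer, Theorem \ref{thm:cls}) and $D(\G)\approx\oplus_{\alpha}Z(p^{\infty})$. If $\A$ is not a direct sum of cyclic groups then, by Pr\"ufer's theorem (Theorem \ref{thm:cls}), $\A$ has a nonzero element of infinite height in $\A$, so $\G$ has length $>\omega$; if $\A$ is a direct sum of cyclic groups, say $\A\approx\oplus_{i}Z(p^{n_i})$ possibly with a finite summand absorbed, then the failure of both (1) and (2) forces one of: (i) $\alpha=\omega$ while $\A$ is infinite; (ii) $\A$ has unbounded character; (iii) $\A$ has bounded character but at least two distinct exponents occur infinitely often among the $n_i$. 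In each of these finitely many cases I would build two computable copies $\G_1,\G_2$ of $\G$ and meet, by a finite-injury priority construction, the requirements $R_e\colon\varphi_e$ is not an isomorphism of $\G_1$ onto $\G_2$. The strategy for a single $R_e$ exploits that one cannot decide in advance the eventual order of a ``pending'' cyclic summand, nor whether a pending tower $x_0,x_1,\ldots$ with $px_{i+1}=x_i$ will be completed to a copy of $Z(p^{\infty})$ or frozen at finite cyclic order: when $\varphi_e$ commits the image of such a summand, one delays committing its fate and then completes it so as to destroy either the homomorphism property or the surjectivity of $\varphi_e$ --- the same device by which $(\omega,<)$ and a computable tree of height $\omega$ fail to be computably categorical --- while keeping both atomic diagrams computable by only deferring, never revising, earlier commitments.

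The main obstacle is the necessity direction, and within it the two ``direct sum of cyclics'' cases: unbounded character, and $\alpha=\omega$ together with infinitely many finite cyclic summands. There the priority construction must juggle infinitely many potential summands whose orders are still undetermined, and the bookkeeping that guarantees the limit structures are genuinely isomorphic to $\G$ --- right character, right $\alpha$, right Ulm invariants --- while still defeating every $\varphi_e$ is the delicate part. On the sufficiency side the only nonroutine point is clause (ii) of the Scott family, which I expect to go through smoothly, since both the divisible hull in $D(\G)$ and the homogeneous part $\oplus_{\omega}Z(p^{m})$ absorb finite configurations freely.
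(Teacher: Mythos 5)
You should first be aware that the paper does not prove this statement: Theorem \ref{thm:smi} is imported as a known result of Goncharov \cite{G1} and Smith \cite{Smi81}, so there is no internal proof to compare against. What the paper does prove later (Theorem \ref{thm:p1}) is that the groups on this list are \emph{relatively} computably categorical, and your sufficiency half is essentially that argument. Even there, one repair is needed: the formulas you describe are not existential, since ``$j$ is the \emph{largest} exponent with $(\exists y)(p^{j}y=b)$'' contains a negative divisibility clause, which is $\Pi_1$, not $\Sigma_1$. The paper's device in case (2) is to make non-membership in $\D=D(\G)$ itself existential by counting: because $\D\cong\oplus_r Z(p^{\infty})$ with $r$ \emph{finite} has exactly $p^{kr}$ elements of order dividing $p^{k}$, the statement $g\notin\D$ is witnessed by exhibiting that many elements of $\D$ (each recognized by divisibility by $p^{m}$) of order at most $o(g)$, all distinct from $g$; in case (1) only orders and linear relations are needed once $\F$ is split off via Lemma \ref{lem:rcp}. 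Without this fix (or some substitute), your family does not consist of finitary existential formulas, so the appeal to the Ash--Knight--Manasse--Slaman/Chisholm theorem does not go through as written.

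The genuine gap is the necessity direction, which is the actual mathematical content of the Goncharov--Smith theorem. Your case analysis (unbounded character; two exponents occurring infinitely often; $\alpha=\omega$ with infinite reduced part; reduced part with an element of infinite height) is the right partition, but for the first three cases you only announce that a finite-injury construction ``would'' defeat every $\varphi_e$: no strategy is given for keeping the order (or height) of a designated pending summand uncommitted until $\varphi_e$ acts, for recovering after injury, and, crucially, for guaranteeing that the limit copies still realize the correct character, the correct number of Pr\"ufer summands, and hence the correct Ulm invariants while infinitely many requirements interfere. The length $>\omega$ case is not addressed at all beyond observing that it occurs, yet it too requires a diagonalization (one cannot simply cite Pr\"ufer's theorem and stop). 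As it stands, the necessity half is a plan rather than a proof, so the proposal does not establish the theorem; the honest summary is that your sufficiency sketch parallels the paper's Theorem \ref{thm:p1} argument (modulo the existential-definability repair above), while the hard direction remains entirely to be carried out.
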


\begin{lemma} \label{lem:rcp} 
\begin{enumerate}
\item If computable groups $\G$  and $\HH$ are relatively
  $\Delta^0_{\alpha}$ categorical, and $\G$ and $\HH$ are $\Sigma^0_1$
  definable in $\G \oplus \HH$, then $\G \oplus \HH$ is relatively
  $\Delta^0_{\alpha}$ categorical.
\item If computable groups $\G_1, \G_2, \dots, \G_k$ are
  relatively computably categorical and each $\G_i$ is $\Sigma^0_1$
  definable in $\G = \G_1 \oplus \dots \oplus \G_n$, then $\G$ is relatively
  $\Delta^0_2$ categorical.
\end{enumerate}
\end{lemma}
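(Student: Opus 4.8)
The plan is to use the syntactic characterization of relative $\Delta^0_\alpha$ categoricity recalled in Section~1 (due to Ash, Knight, Manasse, and Slaman, and independently to Chisholm): a computable structure is relatively $\Delta^0_\alpha$ categorical if and only if it has a c.e.\ Scott family consisting of computable $\Sigma_\alpha$ formulas. So for part~(1) I would fix a computable $\Sigma_1$ (i.e.\ existential) formula $\varphi_\G(x)$ defining $\G$ inside $\G\oplus\HH$ and likewise $\varphi_\HH(x)$ for $\HH$, together with formally $\Sigma^0_\alpha$ Scott families $\Phi_\G$, $\Phi_\HH$ for $\G$ and $\HH$; adjoining finitely many parameters to $\Phi_\G,\Phi_\HH$ as needed (which does not affect the existence of formally $\Sigma^0_\alpha$ Scott families), I may arrange that the automorphisms produced below fix every parameter occurring in $\varphi_\G,\varphi_\HH,\Phi_\G,\Phi_\HH$. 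The goal is to manufacture from these a c.e.\ Scott family of computable $\Sigma_\alpha$ formulas for $\G\oplus\HH$.

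For each pair $\psi_\G\in\Phi_\G$, $\psi_\HH\in\Phi_\HH$ of the same arity $n$, put into the new family the formula
\[
\Theta_{\psi_\G,\psi_\HH}(\ov{x})\ :=\ (\exists\ov{y})(\exists\ov{z})\,\Big[\,\bigwedge_{j<n}\big(x_j=y_j+z_j\ \wedge\ \varphi_\G(y_j)\ \wedge\ \varphi_\HH(z_j)\big)\ \wedge\ \psi_\G^{\varphi_\G}(\ov{y})\ \wedge\ \psi_\HH^{\varphi_\HH}(\ov{z})\,\Big],
\]
where $\psi^{\varphi}$ denotes the relativization of $\psi$ to $\varphi$ (replace each $\exists u\,\cdots$ by $\exists u(\varphi(u)\wedge\cdots)$ and each $\forall u\,\cdots$ by $\forall u(\varphi(u)\to\cdots)$). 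Two things then need checking. First, each $\Theta_{\psi_\G,\psi_\HH}$ is a computable $\Sigma_\alpha$ formula and the family is c.e.: this rests on the sublemma that relativizing a computable $\Sigma_\alpha$ (resp.\ $\Pi_\alpha$) formula to an existential formula and renormalizing again yields a computable $\Sigma_\alpha$ (resp.\ $\Pi_\alpha$) formula, proved by induction on $\alpha$; granting this, the bracketed part of $\Theta$ is a conjunction of computable $\Sigma_\alpha$ formulas, so $\Theta$ is computable $\Sigma_\alpha$, and since relativization acts effectively on indices the whole family is c.e. Second, the family is a Scott family for $\G\oplus\HH$: every element has a \emph{unique} decomposition $y+z$ with $y\in\G$, $z\in\HH$, so for a tuple $\ov{a}$ with components $\ov{g}$ (in $\G$) and $\ov{h}$ (in $\HH$), choosing $\psi_\G\in\Phi_\G$ satisfied by $\ov{g}$ in $\G$ and $\psi_\HH\in\Phi_\HH$ satisfied by $\ov{h}$ in $\HH$ makes $\ov{a}$ satisfy $\Theta_{\psi_\G,\psi_\HH}$ (using that $\G,\HH$ are substructures, so $\G\oplus\HH\models\psi_\G^{\varphi_\G}(\ov{g})$ iff $\G\models\psi_\G(\ov{g})$); conversely, if $\ov{a},\ov{a}'$ both satisfy $\Theta_{\psi_\G,\psi_\HH}$, then by uniqueness of decomposition their $\G$-components both satisfy $\psi_\G$ in $\G$ and their $\HH$-components both satisfy $\psi_\HH$ in $\HH$, so there are automorphisms $\tau_\G$ of $\G$ and $\tau_\HH$ of $\HH$ fixing all parameters and carrying one to the other, whence $\tau_\G\oplus\tau_\HH$ is an automorphism of $\G\oplus\HH$ carrying $\ov{a}$ to $\ov{a}'$. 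By the characterization theorem this proves~(1).

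For part~(2) I would run exactly the same construction with $k$ summands, using the formulas $\varphi_i$ defining $\G_i$ in $\G$ and the formally c.e.\ Scott families of the $\G_i$, with bracketed body $\bigwedge_j\big(x_j=\sum_{i}y_j^{(i)}\big)\wedge\bigwedge_{i,j}\varphi_i(y_j^{(i)})\wedge\bigwedge_i\psi_i^{\varphi_i}(\ov{y}^{(i)})$ under a block of existential quantifiers $(\exists\ov{y}^{(1)})\cdots(\exists\ov{y}^{(k)})$. Here $\alpha=1$, so relativizing existential formulas to existential formulas stays existential with no renormalization needed, and the resulting family is a c.e.\ Scott family of computable $\Sigma_1$ formulas; hence $\G$ is relatively computably categorical, and in particular relatively $\Delta^0_2$ categorical. (The statement of~(2) is given separately because $\G_i$ need not be $\Sigma^0_1$-definable inside the smaller summand $\G_{i+1}\oplus\cdots\oplus\G_k$, so~(2) does not follow by naively iterating~(1).)

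The main obstacle is the relativization bookkeeping behind part~(1): verifying that pushing an existential relativization through the computable infinitary quantifier hierarchy does not raise the level. The delicate point is the outermost universal block of a $\Pi_\alpha$ formula: relativizing $\forall\ov{u}\,\theta$ produces $\forall\ov{u}\big(\bigvee_l\lnot\varphi(u_l)\vee\theta^{\varphi}\big)$, where each disjunct $\lnot\varphi(u_l)$ is $\Pi_1$; one restores the correct form by pulling their universal quantifiers out to the front (using $\forall\ov{x}\,A\vee\forall\ov{y}\,B\equiv\forall\ov{x}\,\ov{y}\,(A\vee B)$ for disjoint variable tuples) and merging them with the outer block, after which the matrix drops back to the original level. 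One must also check that this renormalization, together with the enumeration of $\Phi_\G\times\Phi_\HH$ (resp.\ $\Phi_1\times\cdots\times\Phi_k$), is carried out uniformly so the constructed family is genuinely c.e.; the parameter hygiene noted in the first paragraph, ensuring the gluing automorphisms fix every parameter of the new family, is the remaining routine point.
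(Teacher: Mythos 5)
Your construction is essentially the paper's own proof: relativize the summands' Scott formulas to the $\Sigma_1$ definitions of $\G$ and $\HH$, bind them with the decomposition $x=y+z$ under an existential block, and glue automorphisms of the summands into an automorphism of $\G\oplus\HH$; your bookkeeping about relativization levels and parameters merely fills in the paper's ``it can be checked'' step. In part (2) you even obtain the slightly stronger conclusion that the family consists of computable $\Sigma_1$ formulas (so $\G$ is relatively computably categorical), which of course yields the stated relative $\Delta^0_2$ categoricity.
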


\begin{proof} (1) The Scott formulas for $\G$ and $\HH$ may be modified for
  $\G \oplus \HH$ to quantify only over $\G$ and $\HH$. Then for
 an  element $a = g + h \in \G \oplus \HH$, the Scott formula is
\[
(\exists y \in G)(\exists z \in H)[x = y+z\ \&\ \phi^G(y) \ \&\ \psi^H(z)],
\]
where $\phi^G$ is the Scott formula for $g$, relativized to $\G$, and
$\psi^H$ is the Scott formula for $h$, relativized to $\HH$. It can be
checked that these formulas will be $\Sigma^0_{\alpha}$. For tuples
$\la a_1,\dots,a_n\ra$, the method is the same. If $\la
a_1,\dots,a_n\ra$ and $\la a_1',\dots,a_n'\ra$ satisfy the same Scott
formula, then we have $a_i = g_i + h_i$ and $a'_i = g'_i + h'_i$ where
$g_i$ and $g_i'$ satisfy the same Scott formula in $\G$ so that there
is an automorphism $\Phi$ of $\G$ taking $g_i$ to $g_i'$ and similarly
there is automorphism $\Psi$ of $\HH$ taking $h_i$ to $h_i'$. Then the
mapping taking $x + y$ to $\Phi(x) + \Psi(y)$ will be an automorphism
of $\G \oplus \HH$ taking each $a_i$ to $a_i'$. Therefore, these
formulas make up a Scott family as desired, so that $\G \oplus \HH$ is
relatively $\Delta^0_{\alpha}$ categorical.

(2) The relativized Scott formulas will now be $\Sigma^0_2$ and the
    proof follows as in part (1).
\end{proof}

We can now investigate the relative computable categoricity of computable
Abelian $p$-groups. 

\begin{theorem}
\label{thm:p1} If $\G$ is a computably categorical Abelian $p$-group, then
$\G$ is relatively
computably categorical.
\end{theorem}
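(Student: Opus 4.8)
The plan is to use the Goncharov--Smith characterization (Theorem~\ref{thm:smi}) to reduce the problem to finitely many building blocks, prove relative computable categoricity for each block, and then glue the blocks together using Lemma~\ref{lem:rcp}(1). So suppose $\G$ is computably categorical. By Theorem~\ref{thm:smi}, $\G$ is isomorphic to one of two forms, and in either case $\G$ decomposes as a finite direct sum whose summands are each of the following kinds: (a) a single copy of $Z(p^\infty)$, (b) $\oplus_\omega Z(p^\infty)$, (c) a finite cyclic group $Z(p^k)$, (d) $\oplus_\omega Z(p^m)$ for a fixed $m$. (The finite group $\mathcal F$ is a finite direct sum of cyclic groups, so it splits into summands of type (c); the $\oplus_\alpha Z(p^\infty)$ part is a single summand of type (a) or type (b) depending on whether $\alpha$ is finite or $\omega$ — if $\alpha$ is finite it is a finite direct sum of type (a) summands.) Thus it suffices, by repeated application of Lemma~\ref{lem:rcp}(1), to show that each of the four block types is relatively computably categorical, and that each block is $\Sigma^0_1$-definable in the whole group $\G$.

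The first task is to exhibit a formally c.e.\ Scott family for each block type; by the remark preceding Theorem~2 in the excerpt, a formally c.e.\ Scott family gives relative computable categoricity. For $Z(p^k)$: a tuple is determined up to automorphism by which $\Z_p$-linear combinations of its entries vanish and by the heights of its entries, all of which is captured by a finitary existential formula asserting the orders of the coordinates, asserting divisibility statements $\exists h\,(p^j h = x_i)$ to pin down heights, and asserting the finitely many linear dependences; since the group is finite there are only finitely many tuples to handle, so existential formulas clearly suffice. For $\oplus_\omega Z(p^m)$ the same works: any tuple lies in a finite subgroup, and its automorphism type is determined by orders, heights, and linear dependences among coordinates, each expressible existentially, with the key point that two independent height-$0$ elements of the same order $p^j\le p^m$ can always be extended to a basis of a direct summand isomorphic to $\bigoplus Z(p^m)$ (this uses Kulikov's theorem, Theorem~\ref{thm:ds}(1), exactly as in the proof of Khisamiev's theorem above). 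For $Z(p^\infty)$ and $\oplus_\omega Z(p^\infty)$: in a divisible group every element has infinite height, and a tuple's automorphism type in $Z(p^\infty)$ is determined by the orders and the linear relations — "$x$ has order $p^n$" and "$p^j\mid x$" are existential, and in $\oplus_\omega Z(p^\infty)$ a tuple of elements spanning (mod the kernel of multiplication-by-$p^n$) a given $\Z_p$-configuration can be completed to a direct sum of quasicyclic summands by Baer's theorem (Theorem~\ref{thm:cls}(1) and Theorem~\ref{thm:ds}(2)). In each case the Scott formula is the single existential formula listing orders, the relevant divisibility facts, and the complete linear-dependence pattern of the tuple.

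The second task is $\Sigma^0_1$-definability of each block inside $\G$. Here I invoke Khisamiev's theorem (stated in the excerpt): for a group of the form under consideration, $\{(g,n): \mathrm{ht}(g)\ge n\}$ is $\Sigma^0_1$ and $\{(g,n): o(g)=p^n\}$ is computable, while $D(\G)$ is $\Pi^0_2$ but — crucially — in the computably categorical case $D(\G)=\oplus_\alpha Z(p^\infty)$ is in fact $\Sigma^0_1$ (indeed computable) by the explicit constructions in Corollaries~\ref{cor1} and~\ref{cor2}, or more directly because in these groups an element is divisible iff it has height $\ge\omega$, and for a group that is a direct sum of a divisible part and a bounded part, one can decide membership in $D(\G)$ by checking finitely much information; wait — I should instead argue that in the specific decompositions of Theorem~\ref{thm:smi} the divisible summand, the $\oplus_\omega Z(p^m)$ summand, and the finite summand can each be picked out: $D(\G)$ is $\Sigma^0_1$; the torsion-bounded complement $\A$ with $\G=\A\oplus D(\G)$ (Baer) can be taken with $\A$ a direct sum of cyclics, and within $\A$ the finitely many "large" cyclic summands $Z(p^k)$ of maximal orders together with the finite group $\mathcal F$ form a finite subgroup, hence $\Sigma^0_1$, while $\oplus_\omega Z(p^m)$ is its complement. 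Assembling: write $\G$ as the direct sum of these blocks, apply Lemma~\ref{lem:rcp}(1) repeatedly (valid since relative computable categoricity and $\Sigma^0_1$-definability are each preserved under the pairing), and conclude $\G$ is relatively computably categorical.

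The main obstacle I anticipate is the $\Sigma^0_1$-definability bookkeeping: Khisamiev's theorem hands us $D(\G)$ only as a $\Pi^0_2$ set in general, so I must genuinely use the restricted form given by Theorem~\ref{thm:smi} (height $\ge\omega$ $\iff$ divisible, because the reduced part is bounded) to get $D(\G)$ as $\Sigma^0_1$, and then I must verify that the Baer decomposition $\G=\A\oplus D(\G)$ can be chosen \emph{effectively} enough that $\A$ and its internal splitting into the finite part and the $\oplus_\omega Z(p^m)$ part are also $\Sigma^0_1$-definable in $\G$; the existential Scott-formula construction for the blocks is comparatively routine once Kulikov's and Baer's theorems are invoked to guarantee the needed direct-summand completions.
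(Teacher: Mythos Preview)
Your plan to reduce everything to Lemma~\ref{lem:rcp}(1) has a real gap in case~(2) of Theorem~\ref{thm:smi}, precisely at the ``main obstacle'' you flag. To apply Lemma~\ref{lem:rcp}(1) to the splitting $\G = \D \oplus \HH$ with $\D = \oplus_r Z(p^{\infty})$ and $\HH = \oplus_{\omega} Z(p^m)$, you need \emph{both} summands to be $\Sigma^0_1$ definable in $\G$ by a formula (with finitely many parameters), since the Scott family has to transfer to every isomorphic copy. You correctly observe that $\D$ is $\Sigma^0_1$ (``divisible by $p^m$''), but no choice of complement $\HH$ can be defined this way: $\HH$ is not canonical, and any set defined by a formula with parameters $\bar p$ must be invariant under automorphisms fixing $\bar p$, while the automorphisms of $\G$ fixing any finite $\bar p$ act transitively on the complements of $\D$ extending $\langle \bar p\rangle \cap \HH$. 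So the recursive application of Lemma~\ref{lem:rcp}(1) cannot go through at this step, and your appeal to Corollaries~\ref{cor1}--\ref{cor2} or to an ``effective Baer decomposition'' does not help: those produce a computable \emph{subset} in one fixed presentation, not a definable one.

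The paper sidesteps this entirely. It uses Lemma~\ref{lem:rcp} only to dispose of the finite $\F$, and then in case~(2) works with $\G = \D \oplus \HH$ as a whole rather than trying to split it. The key extra ingredient is that $\D$ is not merely $\Sigma^0_1$ but $\Delta^0_1$: since $r$ is finite, $\D$ has exactly $p^{kr}$ elements of order $\le p^k$, giving a $\Pi^0_1$ definition as well. Hence both ``$c\cdot x \in \D$'' and ``$c\cdot x \notin \D$'' are expressible by existential formulas, and the Scott formula for a tuple $\langle g_1,\dots,g_n\rangle$ records, for each $c_i < o(g_i)$, which linear combinations $\sum c_i g_i$ vanish and which lie in $\D$; this is a finite conjunction of existential conditions. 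The paper then verifies by induction on $n$ (with a division-by-$p$ trick when all coefficients in the true equations are divisible by $p$) that tuples satisfying the same such formula are automorphic. Your per-block Scott families for $\oplus_\alpha Z(p^\infty)$ and $\oplus_\omega Z(p^m)$ are essentially correct and match the paper's case~(1) and the easy ingredients of case~(2); what is missing is this $\Pi^0_1$ bound on $\D$ and the direct construction of Scott formulas for the mixed group.
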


\begin{proof} By Theorem \ref{thm:smi}, we have an expression for the
  form of $\G$.  Any finite structure is certainly relatively computably
  categorical, so we may ignore the $\F$ by Lemma \ref{lem:rcp}. 

(1) If all summands are of the form $Z(p^{\infty})$, then the Scott
  sentence for a tuple $\la g_1, \dots, g_n\ra$ simply tells the order
  $o_i$ of each $g_i$ and tells whether $c_1 \cdot g_1 + \cdots + c_n
  \cdot g_n = 0$ for all $c_1 < o_1, \dots, c_n < o_n$. Suppose that
  single elements $g$ and $g'$ have the same order $p^k$. Then there
  are divisible subgroups $\D$ and $\D'$ of $\G$ containing $g$ and
  $g'$ (respectively), each isomorphic to $Z(p^{\infty})$. Since $g$
  and $g'$ have the same order, there is an isomorphism taking $\D$ to
  $\D'$, which maps $g$ to $g'$ and, since $\D$ and $\D'$ are direct
  summands of $\G$ by Theorem \ref{thm:ds}, this can be extended to an
  isomorphism of $\G$ taking $g$ to $g'$.  This shows that groups of
  type (1) are relatively computably categorical.

(2) We may assume that $\F = 0$ and $\G = \D \oplus \HH$,
  where $\D = \oplus_r Z(p^{\infty})$ and $\HH = \oplus_{\omega}
  Z(p^m)$.  We claim that $\D$ is $\Delta^0_1$ definable, by the
  following. First, note that $g \in \D$ if and only if $g$ is
  divisible by $p^m$, so that $\D$ is $\Sigma^0_1$.  Second, note that
  there are exactly $p^{kr}$ elements in $\D$ of order $\leq p^k$.
  Now given $g \in G$, it follows that $g \in \D$ if and only if for any $p^{kr}$
  distinct elements of $\D$ with order at most $p^{kr}$, the element $g$ equals one of those elements. This
  gives a $\Pi^0_1$ formula for $\D$. The Scott formula for a single
  element $g \in \G$ says whether $c \cdot g \in \D$ for $c =
  1,p,\dots,p^n = o(g)$. Now suppose that $g_1$ and $g_2$ have the
  same Scott formula. If both are divisible, they are automorphic as
  in part (1). Now suppose that $g_1 \notin \D$ and $p^k
  \cdot g_1$ is not divisible for any $p^k < o(g_1)$. Then $g_i = d_i
  + h_i$ where $d_i \in \D$ and $h_i \in \HH$ with $o(h_i) = p^m$. By
  the previous argument, we may assume that $d_1 = d_2$. Now for each
  $i$, $h_i$ generates a pure subgroup $\HH_i$ of $\HH$ of order
  $p^m$, so by Theorem \ref{thm:ds}, we have $\HH = \HH_i \oplus \C_i$ for
  some (isomorphic) subgroups $C_i$ of $\HH$. There is certainly
  an isomorphism of $\HH_1$ onto $\HH_2$ taking $h_1$ to $h_2$ and
  this isomorphism may be extended to an automorphism of $\G$ taking $g_1$ to
  $g_2$. Now suppose that $p^k \cdot g_1$ is divisible for some $k$
  with $p^k < o(g_1)$. Then $g_1 = d_1 + h_1$, where $p^k \cdot h_1 =
  0$ and hence $h_1$ is divisible by $p^{m-k}$. Thus we can find
  $d_1'$ and $h_1'$ with $g_1' = d_1' + h_1'$ such that $g_1 = p^{m-k}(d_1' + h_1')$ and
  similarly for $g_2$ and $g_2'$. It follows from the previous
  argument that $g_1'$ and $g_2'$ are automorphic and the same
  automorphism takes $g_1$ to $g_2$. 

For a sequence $\la g_1,\dots,g_n\ra$ from $\G$, the Scott formula
includes the Scott formulas for each element and also says which
linear combinations $c_1 \cdot g_1 + \dots + c_n \cdot g_n = 0$ and
which are divisible, where each $c_i < o(g_i)$. We prove by induction
on $n$ that if $\la g_1, g_2, \dots, g_n \ra$ and $\la g_1', g_2',
\dots, g_n' \ra$ satisfy the same Scott formula, then they are
automorphic. The case $n=1$ is given above. For $n>1$, suppose that
$\la g_1, g_2, \dots , g_n \ra$ and $\la g_1', g_2', \dots , g_n' \ra$
satisfy the same Scott formula; it follows that $\la g_1, g_2, \dots,
g_{n-1} \ra$ and $\la g_1', g_2', \dots, g_{n-1}' \ra$ also satisfy the
same Scott formula, and are therefore automorphic by induction. There
are two cases. If some constant $c_i$ in a true equation is not
divisible by $p$, then, without loss of generality, we can solve the
equation for $g_n$ and use the observation above that $\la g_1, g_2,
\dots , g_{n-1} \ra$ and $\la g_1', g_2', \dots , g_{n-1}' \ra$ are
automorphic. If all constants of all true equations are divisible by
$p$, then we may find $a_i$ and $a_i'$ with $g_i = p a_i$ and $g_i' =
p a_i'$, and it suffices to show that $\la a_1,\dots,a_n\ra$ and
$\la a_1',\dots,a_n'\ra$ are automorphic. After some finite number of
divisions, we will eventually get coefficients not divisible by
$p$. \nopagebreak\end{proof}

\smallskip

Note that this argument depends on the fact that in $\oplus_{\infty}
Z(p^m)$ an element has order $\leq p^k$ if and only if it is divisible
by $p^{m-k}$. This is not true in the group $\oplus_{\infty}
Z(p^m) \oplus \oplus_{\infty} Z(p^n)$ where $m \neq n$. Of course any
group which is not computably categorical cannot be relatively
computably categorical, so Theorem \ref{thm:p1} characterizes the
relatively computably categorical Abelian $p$-groups. 

Next we consider $\Delta^0_2$ categoricity. The first case is when 
the reduced part of $\G$ has finite period.

\begin{theorem} \label{thm:finp}
\label{thm:p2} Suppose that $\G$ is isomorphic to $\oplus_{\alpha}
Z(p^{\infty}) \oplus \HH$, where $\HH$ has finite period and $\alpha
\leq \omega$. Then $\G$ is relatively $\Delta^0_2$ categorical.
\end{theorem}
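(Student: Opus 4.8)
The plan is to reduce to a direct sum of cyclic groups whose reduced part is handled by the machinery of $s$-functions and Scott families of existential formulas, then glue on the divisible part via Lemma~\ref{lem:rcp}. First I would invoke Baer's theorem (Theorem~\ref{thm:cls}(1)) together with the hypothesis to write $\G = \D \oplus \HH$ where $\D \approx \oplus_\alpha Z(p^\infty)$ is the divisible part and $\HH$ has finite period, and by Pr\"ufer's theorem (Theorem~\ref{thm:cls}(2)) $\HH$ is a direct sum of cyclic $p$-groups. Since $\HH$ has finite period, there is a finite bound $b$ with $\HH \approx \oplus_{m \le b} (\oplus_{\alpha_m} Z(p^m))$ for cardinals $\alpha_m \le \omega$. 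The key point is that $\D$ is $\Sigma^0_1$-definable in $\G$ (an element is in $\D$ iff it is divisible by $p^b$, using that $\HH$ has period dividing $p^b$), so by Lemma~\ref{lem:rcp}(1) it suffices to prove that each of $\D$ and $\HH$ is relatively $\Delta^0_2$ categorical; $\D$ is handled exactly as in case~(1) of Theorem~\ref{thm:p1} (in fact relatively computably categorical), so the real work is showing $\HH$ is relatively $\Delta^0_2$ categorical.

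For $\HH$, the approach is to exhibit a formally $\Sigma^0_2$ Scott family of computable $\Sigma_2$ formulas and apply the equivalence in the second displayed theorem (the Ash--Knight--Manasse--Slaman / Chisholm characterization). The natural invariant of a tuple $\la g_1,\dots,g_n\ra$ in a bounded direct sum of cyclics is: the order $p^{k_i}$ of each $g_i$; the height $ht(g_i)$ of each $g_i$ (a finite number, bounded by $b$); more generally, for every linear combination $c_1 g_1 + \cdots + c_n g_n$ with $c_j < p^{k_j}$, both whether it is $0$ and its height. These data are captured syntactically: ``$g$ has order $p^k$'' is existential (even computable $\Sigma_1$) by Khisamiev's theorem, and ``$ht(g) \ge \ell$'' is $\Sigma^0_1$ hence expressible by a computable $\Sigma_1$ formula $(\exists h)(p^\ell h = g)$, so ``$ht(g) = \ell$'' is a (boolean combination giving a) computable $\Sigma_2$ formula; conjoining finitely many such conditions over all coefficient vectors keeps us at the $\Sigma_2$ level, and the disjunction over the finitely many possible orders and heights is c.e. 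The Scott family is the collection of all these formulas as the tuple ranges over $\HH$; condition~$(i)$ of a Scott family is immediate, and the family is visibly a $\Sigma^0_2$ set of computable $\Sigma_2$ formulas.

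The main obstacle — and the heart of the argument — is verifying condition~$(ii)$: that two tuples in $\HH$ realizing the same such formula are automorphic. This is a finite-rank linear-algebra / basis-manipulation argument over the $p$-adic structure of $\oplus_{m\le b}\oplus_{\alpha_m} Z(p^m)$. I would argue by induction on $n$, as in the proof of Theorem~\ref{thm:p1}(2): given tuples $\ova = \la g_1,\dots,g_n\ra$ and $\ovb = \la g_1',\dots,g_n'\ra$ with the same invariant, pass to $\la g_1,\dots,g_{n-1}\ra$, obtain an automorphism by induction, and then adjust it to move $g_n$ to $g_n'$. If some true relation involves $g_n$ with a coefficient prime to $p$, solve for $g_n$ and reduce to the $(n-1)$-tuple; otherwise every $g_i$ appearing nontrivially is divisible by $p$ and one divides through and recurses on smaller order (the bounded period guarantees this terminates). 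The genuinely new wrinkle compared to case~(1) of Theorem~\ref{thm:p1} is bookkeeping the heights: since elements of different cyclic summands $Z(p^m)$, $Z(p^{m'})$ have different height-to-order ratios, the height data in the Scott formula is exactly what pins down, up to automorphism, how a given element distributes across the various $Z(p^m)$ blocks — one shows that an element's recorded order and height force it into a predictable ``normal form'' relative to a basis, and a tuple with matching normal forms can be carried to another by an automorphism built blockwise from Kulikov's theorem (Theorem~\ref{thm:ds}(1)), which lets pure bounded subgroups be split off as direct summands. Carrying this normal-form/automorphism-extension bookkeeping through the induction cleanly is where the care is required; everything else is routine.
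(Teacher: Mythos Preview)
Your invocation of Lemma~\ref{lem:rcp}(1) has a gap: that lemma requires \emph{both} summands to be $\Sigma^0_1$ definable in $\G$, and you only check this for $\D$. In the interesting case ($\alpha\ge 1$ and $\HH$ infinite) the complement $\HH$ is in fact not definable in $\G$ at all, even with finitely many parameters. Given any finite tuple $\ovc$, choose a cyclic direct summand $Z(p^m)$ of $\HH$ disjoint from the span of the $\HH$-components of $\ovc$ (possible since an infinite direct sum of finite cyclic groups is not finitely generated) and let $\psi:\HH\to\D$ be a nonzero homomorphism supported on that summand; then $d+h\mapsto d+\psi(h)+h$ is an automorphism of $\G$ fixing $\ovc$ but moving elements of $\HH$ out of $\HH$. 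So the reduction to separate Scott families for $\D$ and for $\HH$ via Lemma~\ref{lem:rcp} does not go through as stated.

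The remedy --- and this is exactly what the paper does --- is to skip the splitting lemma and build the $\Sigma_2$ Scott family directly for $\G$. For a tuple $\la g_1,\dots,g_n\ra$ the formula records, for each linear combination $\sum c_i g_i$ with $c_i<o(g_i)$: its order; whether it lies in $\D$ (the $\Sigma_1$ condition ``divisible by $p^b$''); and, if not, its exact height $<b$ (a conjunction of a $\Sigma_1$ and a $\Pi_1$ condition, hence $\Sigma_2$). This is precisely your height-and-order data for $\HH$, augmented by the single extra bit ``is the combination in $\D$?''. The automorphism verification then proceeds just as you sketch, following the pattern of Theorem~\ref{thm:p1}; the paper in fact says only that ``the argument is essentially the same'' as there. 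So your Scott-family analysis is the right idea and essentially matches the paper's --- it just has to be carried out in $\G$ directly rather than in $\HH$ after an illicit split.
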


\begin{proof} Since the period $p^r$ is finite, $\G$ is a direct sum of
  computably categorical groups of the form $\oplus_{\alpha}
  Z(p^{\infty})$ and $\oplus_{\omega} Z(p^m)$, together with some
  finite $F$. The Scott formulas are similar to those given above for
  the computably categorical groups, except that when $g$ is not
  divisible, we need to ask whether it is divisible by $p^k$ for each
  $k < r$, or is not divisible by $p^k$. The latter question is
  $\Pi^0_1$, so the Scott formulas are a conjunction of $\Sigma^0_1$
  and $\Pi^0_1$. Likewise for a sequence of elements, we need to ask
  whether each linear combination is divisible by $p^k$. After that,
  the argument is essentially the same as in Theorem \ref{thm:p1}. 
\end{proof}

There is a special case when $\G$ has no divisible part.

\begin{theorem} \label{thm:red} Suppose that $\G$ is a computable
  Abelian $p$-group with all elements of finite height. Then $\G$ is
  relatively $\Delta^0_2$ categorical.  [Note: These are exactly the
  reduced Abelian $p$-groups of length at most $\omega$.]
\end{theorem}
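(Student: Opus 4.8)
The plan is to produce a formally $\Sigma_2^0$ Scott family for $\G$ and then quote the theorem of Ash, Knight, Manasse, and Slaman (and, independently, Chisholm): a computable structure with a c.e.\ Scott family of computable $\Sigma_\alpha$ formulas is relatively $\Delta_\alpha^0$ categorical. Since every nonzero element of $\G$ has finite height, Pr\"ufer's theorem (Theorem~\ref{thm:cls}) gives $\G \cong \bigoplus_i Z(p^{n_i})$; equivalently, as the note in the statement records, these are exactly the reduced $p$-groups of length at most $\omega$, so $p^\omega G = 0$. The point of the reduction is that all relevant invariants are now finite: for an element $g$ of order $p^m$ the heights $ht(g), ht(pg), \dots, ht(p^{m-1}g)$ are natural numbers, and likewise the height in $\G$ of every element of the finite subgroup generated by a tuple.

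For a tuple $\bar g = (g_1,\dots,g_n)$ from $\G$ let $\psi_{\bar g}(\bar x)$ be the conjunction of: the quantifier-free statements giving $o(x_i)$ for each $i$; for every $\bar c$ with $c_i < o(g_i)$, the quantifier-free statement recording whether $c_1 x_1 + \dots + c_n x_n = 0$; and, for every such $\bar c$, the statement $ht(c_1 x_1 + \dots + c_n x_n) = ht(c_1 g_1 + \dots + c_n g_n)$. The only clauses needing quantifiers are of the last kind: $ht(h) \ge k$ is the finitary $\Sigma_1$ statement $(\exists y)(p^k y = h)$, so $ht(h) = k$ is a conjunction of a $\Sigma_1$ and a $\Pi_1$ formula, hence a computable $\Sigma_2$ formula; therefore $\psi_{\bar g}$ is a finite conjunction of computable $\Sigma_2$ formulas, hence computable $\Sigma_2$. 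After an obvious syntactic normalization, the family $\Phi = \{\psi_{\bar g}\}$ is a decidable (hence $\Sigma_2^0$) set of formulas with no parameters, and each tuple satisfies its own $\psi_{\bar g}$, so clause (i) of a Scott family holds trivially.

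The whole content is clause (ii): if $\bar g$ and $\bar g'$ satisfy the same $\psi$, then they are automorphic. I would prove this by induction on the length of the tuple, in the spirit of Theorems~\ref{thm:p1} and~\ref{thm:p2}. The recorded relation data gives an abstract isomorphism $\phi$ of $\langle\bar g\rangle$ onto $\langle\bar g'\rangle$ with $\phi(g_i)=g_i'$, and the recorded height data says $\phi$ preserves heights computed in $\G$. If some true relation has a coefficient coprime to $p$, one eliminates the corresponding generator and invokes the inductive hypothesis on the shorter tuple, exactly as in Theorem~\ref{thm:p1}. Otherwise one arranges a pure cyclic direct summand aligned with the tuple: replacing the problem by the one for a tuple of $p$th parts of the $g_i$ (remembering, via the height data, how many divisions were performed), one may assume some $h\in\langle\bar g\rangle$ has height $0$ with no gap in its height sequence, so $\langle h\rangle$ is pure of finite period, hence by Kulikov's theorem a direct summand $\G = \langle h\rangle\oplus\C$; likewise $\G=\langle h'\rangle\oplus\C'$; by Ulm's theorem $\C\cong\C'$; one checks that the $\C$- and $\C'$-components of the two tuples carry the same recorded data inside $\C\cong\C'$ (their heights are heights of elements of $\langle\bar g\rangle$, hence recorded), applies the inductive hypothesis in $\C$, and reassembles the automorphism.

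The main obstacle is making this last induction actually close: one must check that the ``divide down and extract a pure cyclic summand'' move can always be carried out from the recorded data, and that after peeling off such a summand the remaining tuple in $\C$ has exactly the data predicted for the corresponding tuple in $\C'$ --- in short, that the finitary height information on $\langle\bar g\rangle$ is a complete invariant for the $\mathrm{Aut}(\G)$-orbit of $\bar g$. This is a finite-subgroup instance of the classical theory of equivalence of subgroups in direct sums of cyclic $p$-groups (Kulikov, Ulm), and it is where the hypothesis ``length at most $\omega$'' --- via $p^\omega G = 0$, which forces every height sequence to be finite and every finite subgroup to lie inside a finite direct summand --- is essential.
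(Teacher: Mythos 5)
Your Scott family (orders, vanishing linear combinations, and exact heights of all linear combinations, each written as a computable $\Sigma_2$ formula) is a reasonable candidate, and clause (i) is indeed trivial. But the theorem lives entirely in clause (ii), and that is exactly where your argument does not close. The inductive step you sketch -- ``either eliminate a generator using a coefficient coprime to $p$, or divide the tuple down by $p$ and extract a pure cyclic summand $\langle h\rangle$ with $h\in\langle\bar g\rangle$ of height $0$ and gap-free height sequence'' -- fails already for a single element. In $\G = Z(p)\oplus Z(p^3)=\langle a\rangle\oplus\langle b\rangle$, take $g=a+pb$: its height sequence is $0,2$, so every generator of $\langle g\rangle$ has a gap, $\langle g\rangle$ is not pure, and $g$ is not divisible by $p$, so neither branch of your dichotomy applies (the ``divide down'' move from Theorem \ref{thm:p1} used that in $\oplus_\omega Z(p^m)$ an element of order $\le p^k$ is divisible by $p^{m-k}$, which is precisely what the remark after that theorem says fails in general). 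Moreover, even when a pure cyclic summand exists, your ``reassembly'' step needs the projections of $\bar g$ and $\bar g'$ onto the complements $\C$ and $\C'$ to satisfy the same formula under some isomorphism $\C\cong\C'$, and the projections depend on the choice of complement, so this is not forced by the recorded data; you acknowledge the issue but do not resolve it. (Your underlying claim -- that the height data on $\langle\bar g\rangle$ determines the orbit -- is in fact true, but it needs the extension theory for height-preserving isomorphisms between nice subgroups with matching relative Ulm invariants, a nontrivial classical result you neither cite nor prove.)

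The paper sidesteps all of this with a different Scott family: the formula for $\bar g$ asserts the existence of a \emph{finite pure subgroup} $\F$ containing $\bar g$ (not one generated by $\bar g$) together with the full atomic diagram of $\F$ with $\bar g$ marked; purity is a $\Pi_1$ condition, so the whole formula is computable $\Sigma_2$. Clause (i) follows from Pr\"ufer's theorem: $\G=\oplus_{n<\omega}Z(p^{i_n})$, and every tuple lies in a finite partial sum, which is pure. Clause (ii) follows with no induction on the tuple: two tuples satisfying the same formula lie in isomorphic finite pure subgroups $\F,\F'$ via a map matching the tuples, Kulikov's theorem (Theorem \ref{thm:ds}) makes $\F$ and $\F'$ direct summands, the complements have the same Ulm invariants and hence are isomorphic, and the finite isomorphism extends to an automorphism of $\G$. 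If you want to keep your height-based formulas, you must either import the extension theorem mentioned above or redo your induction so that it handles gap elements such as $a+pb$; as written, the proof has a genuine gap at its central step.
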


\begin{proof} For any finite subgroup $\F$ of $\G$ and any finite
  sequence $\ovg$ of elements of $\F$, the formula
  $\phi_{\ovg,\F}(\ovx)$ gives the atomic diagram of $\F[\ovg]$ and
  also states that $\F$ is a pure subgroup. The latter question is a
  $\Pi_1$ condition, 
\[
(\forall g\in F)(\forall n)(\forall x)[p^n\cdot x = g \Implies
  (\exists y \in F) p^n \cdot y = g].
\]
The Scott formula for $\ovg$ states that there exists a finite set $F
= \{a_1,\dots,a_t\}$ so that $\phi_{\ovg,\F}(\ovg)$ and furthermore,
no subgroup of $\F$ is pure. If $\ovg$ and $\ovg'$ satisfy the same
Scott formula, then there are isomorphic pure subgroups $\F$
containing $\ovg$ and $\F'$ containing $\ovg'$. Since $\F$ and $\F'$
are pure, there exist isomorphic summands $\HH$ and $\HH'$ so that $\G
= \F \oplus \HH = \F' \oplus \HH'$ so that the isomorphism between
$\F$ and $\F'$ may be extended to an automorphism of $\G$.

Now $\G = \oplus_{n<\omega} Z(p^{i_n})$ where each $i_n$ is finite, so that for
each $k$, $\oplus_{n<k} Z(p^{i_n})$ is a pure subgroup of $\G$ and any
finite sequence $\ovg$ will be included in one of these pure
subgroups. Thus every $\ovg$ satisfies some Scott formula. 
\end{proof}
 
We claim that no other Abelian $p$-groups are relatively $\Delta^0_2$
categorical. We first show this for groups which are products of
cyclic and quasicyclic groups. It turns out that even for a group $\G$
of infinite period with only finitely many $Z(p^{\infty})$ components,
$\G$ is not relatively $\Delta^0_2$ categorical. This differs from
equivalence structures, where any structure with only a finite number
of infinite equivalence classes is relatively $\Delta^0_2$
categorical. For equivalence structures, each class is necessarily
computable but $D(\G)$ need not be computable even when there is just
one copy of $Z(p^{\infty})$.

\begin{theorem} \label{thm:nrd} Suppose that a computable group $\G$ is isomorphic 
to $\oplus_{\alpha} Z(p^{\infty}) \oplus \HH$ for some group $\HH$
 with infinite period and all elements of finite height, where $\alpha
 \neq 0$. Then $\G$ is not relatively $\Delta^0_2$ categorical.
\end{theorem}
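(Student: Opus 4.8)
The plan is to show that $\G$ is not even $\Delta^0_2$ categorical by building a computable copy $\B$ of $\G$ onto which no $\Delta^0_2$ isomorphism exists, using a $\emptyset'$-oracle construction to diagonalize against all potential $\Delta^0_2$ isomorphisms $\Psi_e$ (indexed by their Turing functionals with oracle $\emptyset'$). The leverage is exactly the point emphasized in the remark preceding the statement: since $\HH$ has infinite period, its character is unbounded, so by Lemma \ref{l3} there is a computable $s_1$-function $f$ whose limits $m_i$ are unbounded and realized as orders of cyclic summands. The key obstacle a $\Delta^0_2$ map faces is that, to send an element of $\G$ into a correct cyclic summand of $\B$, it must determine the \emph{eventual} order of that summand --- and since $\B$ is built so that a component currently looking like $Z(p^k)$ may still grow, this ``eventual order'' information is genuinely $\Sigma^0_2$, one quantifier beyond what a $\Delta^0_2$ map can settle on a finite tuple.

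First I would fix a standard computable copy $\G$ realized via an $s_1$-function as in Corollary \ref{cor3}, with the $\alpha$ many $Z(p^\infty)$ summands having $\Sigma^0_1$ (indeed computable, if $\alpha$ is finite) divisible part. Then I would construct $\B\cong\G$ by a stagewise amalgamation much like the one in the proof of Theorem \ref{thm:etog}: maintain component groups $\B_j^s$, each currently a copy of some $Z(p^{k})$ or an approximation to $Z(p^\infty)$, and grow them. For each requirement $R_e$ (``$\Psi_e^{\emptyset'}$ is not an isomorphism $\G\to\B$''), pick a fresh generator $a$ of a large cyclic summand of $\G$ of eventual order $p^{m_i}$ (with $m_i$ chosen bigger than anything used so far), wait for a $\emptyset'$-approximation stage at which $\Psi_e$ appears to map $a$ to some $b\in\B_j^s$ with a definite ``order'' committed; then, since we control $\B$, keep growing the component $\B_j$ past order $p^{m_i}$ (or, dually, freeze it below $p^{m_i}$), so that $\Psi_e(a)$ ends up in a summand of the wrong order, contradicting that $\Psi_e$ is an isomorphism. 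Because there are infinitely many summands of unboundedly large order available in $\G$ and we may always introduce new large components in $\B$, the requirements do not conflict: each $R_e$ acts on its own private pair of summands.

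The main technical step is the bookkeeping that makes $\B$ genuinely isomorphic to $\G$ despite all this meddling: we must ensure the character of $\B$ equals $\chi(\G)$ exactly and that $\B$ has precisely $\alpha$ quasicyclic summands, while still reserving enough ``room'' in the component orders to defeat each $\Psi_e$. The standard device is to use the $s_1$-function to drive the \emph{target} orders and to let $R_e$ only choose \emph{which} computable component realizes a given target order $p^{m_i}$ --- so the multiset of eventual orders in $\B$ is forced to match that of $\G$ regardless of the diagonalization. Finite injury suffices: once $R_e$ has successfully committed a component's growth (an event detectable with the $\emptyset'$ oracle), it is never injured, and lower-priority requirements simply avoid that component.

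The hard part, and the place to be careful, is the interaction between the $\emptyset'$-approximation to $\Psi_e$ and the requirement that $\B$ be \emph{computable}: the construction of $\B$ must be effective, while the diagonalization strategy is only $\emptyset'$-effective. The resolution is that $\B$'s construction uses only the $s_1$-function (computable) plus a fixed in-advance \emph{schedule} for which component each $R_e$ owns; the $\emptyset'$ information is used only to decide, at a given stage, whether $R_e$'s ``opponent'' has yet committed, and hence whether to keep growing that component or to stop --- but \emph{both} outcomes are consistent with a computable $\B$ of the right isomorphism type, since a component of $Z(p^{m_i})$ and one whose growth continues to $Z(p^{m_{i'}})$ for a larger target are both legitimate summands appearing in $\G$. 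Thus $\B$ is computable, $\B\cong\G$, and no $\Delta^0_2$ function is an isomorphism $\G\to\B$; a fortiori $\G$ is not relatively $\Delta^0_2$ categorical.
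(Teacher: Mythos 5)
Your plan aims at the stronger, non-relativized statement: you build a computable copy $\B$ and diagonalize against all $\Delta^0_2$ maps. If that worked it would of course imply the theorem, but the obstruction you lean on --- that the eventual order of a growing cyclic component is only $\Sigma^0_2$ --- makes no essential use of the quasicyclic summands, and it proves too much: run the very same construction with $\alpha=0$ and it would show that a reduced group $\oplus_{i}Z(p^{m_i})$ of unbounded character is not $\Delta^0_2$ categorical, contradicting Theorem \ref{thm:red}, which says such groups are even relatively $\Delta^0_2$ categorical. The reason the strategy cannot be carried out is that a $\Delta^0_2$ isomorphism never has to identify summands or their eventual orders; it only preserves the group structure, the order of an element is computable, and the exact finite height of an element of a computable group is decidable from $\emptyset'$. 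So ``growing $\B_j$ past $p^{m_i}$'' falsifies nothing: an element of order $p^{m_i}$ sits perfectly well inside a larger cyclic summand (merely acquiring positive height, which the $\emptyset'$-oracle opponent can see). Dually, the pivotal step ``wait for a stage at which $\Psi_e$ has a definite order committed'' is not available against a $\Delta^0_2$ opponent, whose approximation never commits irrevocably; the requirement has no well-defined moment to act, and the claimed conflict-free finite-injury arrangement never gets started. The only invariant here that genuinely lies beyond $\Delta^0_2$ is membership in the divisible part, a $\Pi^0_2$ property, and your construction never engages it. In fact, whether groups as in the theorem with finitely many quasicyclic summands can be (plain) $\Delta^0_2$ categorical is precisely one of the open problems in Section 5; the paper obtains the non-relative failure only when $\alpha=\omega$ and $\HH$ has a computable copy, and then by an index-set ($\Pi^0_4$-completeness) argument (Theorems \ref{thm:ge} and \ref{thm:ndg}), not by a diagonalization.

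The hypothesis of \emph{relative} $\Delta^0_2$ categoricity is what the paper's proof actually uses, through the syntactic characterization: it would give a formally $\Sigma^0_2$ Scott family. One then takes a divisible element $a$ (of order $p$ when $\alpha=\omega$; a suitable element above the top parameter when $\alpha$ is finite), instantiates the existential witnesses of its Scott formula $\Psi(x,\ovd)$ to get a $\Pi^0_1$ formula $\theta(x,\ovd,\ovc)$ true of $a$, and uses the fact that a $\Pi_1$ formula holds in $\G$ iff it holds in every finite subgroup containing the displayed elements, together with the infinite period of $\HH$, to reproduce the relevant finite configuration inside $\HH$ and conclude that some non-divisible $a'$ satisfies the same Scott formula. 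That contradicts the defining property of a Scott family, since no automorphism sends a divisible element to a non-divisible one. So the repair is not better bookkeeping in a finite-injury construction; you need to switch to the Scott-family argument (or else produce a genuinely new idea for the non-relative question, which the paper leaves open).
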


\begin{proof} Let $\G = \D \oplus \HH$, where $\D$ is divisible and
$\HH$ is a product of cyclic groups of unbounded order. Suppose $\G$
  had a $\Sigma^0_2$ family of Scott sentences. We will show that
  there is an element of the divisible part $\D$ whose Scott formula
  is satisfied by some element of $\HH$. But there can be no
  automorphism of $G$ mapping a divisible element to a non-divisible
  element. This contradiction will show that there is no such Scott
  sentence. 

We first assume that $\alpha = \omega$. 

Let $a$ be an element of $\D$ of order $p$ which satisfies a
$\Sigma^0_2$ Scott formula $\Psi(x,\ovd)$. We observe first that 
we may assume that the parameters $\ovd$ are independent, and in fact
belong to different components in the product decomposition of
$\G$. For the finite summands, we can assume the parameter is a generator, and
for the quasicyclic summands, we can take the parameter to have
maximal order (and therefore generate any other possible parameters). 

Of course any other divisible element of order $p$ must satisfy the
same formula, so we may assume that $a$ belongs to a subgroup $\A$
isomorphic to $Z(p^{\infty})$ which does not contain any of the
parameters. Then, by choosing witnesses $\ovc$ to instantiate the
existentially quantified variables in $\Psi$, we have a computable
$\Pi^0_1$ formula $\theta(x,\ovd,\ovc)$ satisfied by $a$.

We can now use the fact that this $\Pi^0_1$ sentence is true in $\G$
if and only if it is true in all finite subgroups of $\G$ containing
$a,\ov{c},\ovd$. 

Let $a,\ovc,\ovd$ generate a finite subgroup $\F_1$ of $\G$ and let
  $\mathcal{A}_1 = \A \cap \F_1$ be a finite subgroup of $\A$ of order $p^m$, and
  $\F_1 = \A_1 \oplus \mathcal{B}_1$ for some group $\B_1$.  Now find a factor
  group $\HH_1 \subset \HH$ of $\G$ of order type $\geq p^m$ and
  independent of $\F_1$; this exists since $\HH$ has infinite
  period. We may assume without loss of generality that $|\HH_1| =
  p^m$ and that each of $\ovd$ is in $\HH_1$. Let $\phi$ be an
  isomorphism from $\A_1 \oplus \F_1$ to $\HH_1 \oplus \F_1$ which is
  the identity on $\HH_1$, and $a' = \phi(a)$ and let $\ov{b}$ be
  the image of $\ov{c}$ under this mapping.

We claim that $\theta(a',\ovd,\ovb)$ holds. Now let $\HH'$ be any finite
  subgroup of $\G$ containing $a',\ovd,\ovb$; we may assume that $\HH'
  = \HH_1 \oplus \F_2$ where $\F_1 \subseteq \F_2$. Furthermore, we
  may assume (by taking an automorphism of $\G$ if necessary) that
  $\F_2 \cap \A_1 = \emptyset$. Then $\phi^{-1}$ may be extended to an
  isomorphism from $\HH'$ to a finite subgroup $\A_1 \oplus
  \F_2$. Since $\theta$ is $\Pi^0_1$, $\A_1 \oplus \F_2 \models
  \theta(a,\ovc,\ovd)$. Thus by the isomorphism, $\HH' \models
  \theta(a',\ovb,\ovd)$. Since this is true for any finite subgroup of
  $\G$, it follows that $\G \models \theta(a',\ovb,\ovd)$. Therefore
  $\Psi(a',\ovd)$ holds for the Scott formula $\Psi$. 

But $a'$ is not divisible, so it is not automorphic with $a$. This
contradiction proves the theorem in the first case. 

Suppose now that $\alpha$ is finite; 
we will assume for simplicity that $\alpha = 1$. 
Let $d_i$ be the parameter of greatest order in any quasicyclic
summand, and let $a$ be an element of that summand with $p \cdot a =
d$. Let $p^m$ be the order of $a$ and let $\F_1$ be the cyclic
subgroup generated by $a$; note that any other parameter in $\F_1$ is
a multiple of $d_i$. Now choose an element $g$ of order $p^m$,
generating a subgroup $\F_2$ so that any element from $\ovd$ in $\F_1
\oplus \F_2$ is already in $\F_1$. This can be done since $\HH$ has
infinite period. Now let $a' = a + p^{m-1} \cdot g$, so that $p\cdot a'
= d_i$. Then there is an automorphism of $\F_1 \oplus \F_2$ taking $a$
to $a'$ and preserving the parameters, defined by $\psi(j \cdot a + k
\cdot g) = j \cdot a' + k \cdot g$. This automorphism may be extended
to an automorphism of the finite subgroup $\HH_1$ generated by $a, \ovd,
\ovb$. Let $\ovc$ be the image of $\ovb$ under this extended automorphism.

We claim that $\theta(a', \ovd, \ovc)$ holds. Let $\HH'$ be any finite
subgroup of $\G$ containing $a', \ovd, \ovc$; we may assume that $\HH'
= \HH_1 \oplus \F$ for some $\F$, so that there is an automorphism of
$\HH'$ taking $a, \ovd, \ovb$ to $a', \ovd, \ovc$. Since $\HH'$ is a
finite subgroup of $\G$, we have $\HH' \vDash \theta(a, \ovd, \ovb)$
and hence, by the automorphism, $\HH' \vDash \theta(a', \ovd,
\ovc)$. Since this holds for any finite subgroup $\HH'$, it follows
that $\theta(a', \ovd, \ovc)$ and hence $\Psi(a', \ovd)$. But $a'$ is
not divisible, so cannot be automorphic with $a$. 
\end{proof}

In the paper \cite{C-C-H-M}, we defined a uniformly $\Sigma^0_2$
enumeration $K_e$ of the $\Sigma^0_2$ characters and an enumeration
$\C_e$ of the computable equivalence structures. 
For a total computable function $\phi_e: \omega \times \omega \to
\omega$, let $\G_e$ be the structure with universe $\omega$ and with
group operation $\phi_e$.

\begin{lemma}[\cite{C-C-H-M}]\label{lem:ke} For any fixed infinite
  $\Sigma^0_2$ character $K$, $\{e: K_e = K\}$ is $\Pi^0_3$
  complete.
\end{lemma}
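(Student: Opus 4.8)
The plan is to prove $\{e : K_e = K\}$ is $\Pi^0_3$ by analyzing the quantifier complexity of the defining condition, and then to prove $\Pi^0_3$-hardness by a reduction from a known $\Pi^0_3$-complete set, using the freedom we have in building computable equivalence structures (or, equivalently by Theorem~\ref{thm:etog}, computable Abelian $p$-groups) whose characters realize prescribed $\Sigma^0_2$ sets.

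First I would establish membership in $\Pi^0_3$. Since $K$ is a fixed infinite $\Sigma^0_2$ character, we have $K_e = K$ iff $K_e \subseteq K$ and $K \subseteq K_e$. Each $K_e$ is uniformly $\Sigma^0_2$, so ``$\langle n,k\rangle \in K_e$'' is a $\Sigma^0_2$ predicate of $e,n,k$ and ``$\langle n,k\rangle \notin K_e$'' is $\Pi^0_2$; similarly membership in the fixed set $K$ is $\Sigma^0_2$ (a constant complexity, not depending on $e$). Then $K_e \subseteq K$ unwinds to $(\forall n,k)[\langle n,k\rangle \in K_e \to \langle n,k\rangle \in K]$, which is $\forall(\Pi^0_2 \vee \Sigma^0_2) = \Pi^0_3$, and $K \subseteq K_e$ unwinds to $(\forall n,k)[\langle n,k\rangle \in K \to \langle n,k\rangle \in K_e]$, which is $\forall(\Pi^0_2 \vee \Sigma^0_2) = \Pi^0_3$ as well. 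The conjunction of two $\Pi^0_3$ conditions is $\Pi^0_3$, so $\{e : K_e = K\} \in \Pi^0_3$.

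For hardness, the plan is to exhibit a computable function $e = g(x)$ with $x \in P \iff K_{g(x)} = K$ for some fixed $\Pi^0_3$-complete $P$. A natural choice is $P = \{x : W_x^{(2)} = \omega\}$ (cofinality-style statement relative to $\emptyset''$), or more directly a set of indices for total $\Sigma^0_2$-approximations that converge to a fixed infinite set; the classical $\Pi^0_3$-complete set $\mathrm{Cof}$ or $\{x : \lim_s \lim_t f_x(x,s,t) \text{ is the characteristic function of } K\}$ works. Given such an $x$, I would use the enumeration machinery from \cite{C-C-H-M} --- specifically the constructions behind Corollary~\ref{cor1} and Lemma~2.3 of \cite{C-C-H-M}, which build, from any $\Sigma^0_2$ character, a computable equivalence structure realizing it --- to produce uniformly in $x$ a computable equivalence structure $\C_{h(x)}$ whose character equals $K$ exactly when the approximation coded by $x$ stabilizes to $K$, and otherwise differs from $K$ (e.g., by omitting or adding a pair $\langle n,k\rangle$). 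Composing with the $\Sigma^0_2$-indexing function for characters gives the desired $g$. The key technical point is arranging, uniformly and computably in $x$, that a ``failure'' of the $\Pi^0_3$ condition at $x$ is reflected by a detectable defect in the character $K_{g(x)}$, i.e., that the reduction is honest in both directions.

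The main obstacle I expect is the hardness direction, and within it the precise bookkeeping that guarantees $K_{g(x)} = K$ exactly on $P$: one must ensure that when the $\Pi^0_3$ statement defining $P$ fails, the constructed structure's character genuinely deviates from $K$, which requires controlling both the ``at least $k$ classes of size $n$'' lower bounds (a $\Sigma^0_2$ phenomenon, easy to inject) and the corresponding upper bounds (ensuring \emph{not} too many classes of a given size, which is the $\Pi^0_2$-flavored part and the delicate one). Leveraging the fact that $K$ is infinite --- so there is ``room'' to code a witness for the outermost universal quantifier of the $\Pi^0_3$ predicate into infinitely many coordinates of the character --- is what makes the reduction go through; this is exactly the role played by the infinitude hypothesis in the statement, and I would follow the analogous argument for equivalence structures in \cite{C-C-H-M}, transporting it to $p$-groups via Theorem~\ref{thm:etog} and the uniformity of all the constructions involved.
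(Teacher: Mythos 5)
Your $\Pi^0_3$ membership computation is fine. But note first that this paper does not actually prove Lemma \ref{lem:ke}: it is quoted from \cite{C-C-H-M}, so the proposal has to stand on its own, and on its own the hardness half has a genuine gap. The sets you offer to reduce from are not $\Pi^0_3$-complete as stated: $\mathrm{Cof}$ is $\Sigma^0_3$-complete, and ``$W_x^{(2)}=\omega$'' (universality relative to $\emptyset''$) is $\Pi^0_4$; what you want is something like $\{x: \Phi_x^{\emptyset'} \mbox{ is total}\}$ or $\{x: \mbox{every column of } W_x \mbox{ is finite}\}$. That is fixable, but it signals that the reduction was never actually set up, and indeed the entire content of the lemma is the construction you defer: one must produce, uniformly in $x$, an index $g(x)$ in the fixed $\Sigma^0_2$ enumeration of characters such that $K_{g(x)}=K$ exactly when the $\Pi^0_3$ condition holds. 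Since $K$ is fixed but only $\Sigma^0_2$ (not computable), the obvious moves fail: e.g., deleting a designated pair of $K$ when the $i$-th $\Sigma^0_2$ conjunct fails yields a set defined by a $\Sigma^0_2 \wedge \Pi^0_2$ condition, which is not obviously (uniformly) $\Sigma^0_2$, and adding a ``defect'' pair requires knowing a pair outside $K$. Keeping the coded set a character, keeping it uniformly $\Sigma^0_2$, and controlling both inclusions simultaneously is precisely where the infinitude of $K$ has to be used concretely (for finite $K$ the index set is only a Boolean combination of $\Sigma^0_2$ conditions, so no such coding is possible); invoking ``room to code'' does not supply this construction.

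Two further points. Appealing to ``the analogous argument for equivalence structures in \cite{C-C-H-M}'' is circular here, because Lemma \ref{lem:ke} is itself the statement being imported from \cite{C-C-H-M}; citing that argument is citing the proof you were asked to give. And the detour through computable equivalence structures or $p$-groups (Theorem \ref{thm:etog}) is beside the point for this lemma, which concerns only the enumeration $K_e$ of $\Sigma^0_2$ characters and mentions no structures at all; realizing characters by structures is what is needed later, for the index-set result about $\{e: \mathcal{C}_e \simeq \mathcal{G}\}$ (Theorem \ref{thm:ge}), not here. So: membership is correct, but the hardness direction is a plan with its key step missing rather than a proof.
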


\begin{theorem}[\cite{C-C-H-M}] \label{thm:ce} Let $\A$ be a
  computable equivalence structure with unbounded character $K$ and
  with infinitely many infinite equivalence classes. Suppose also that
  there exists a structure $\B$ with character $K$ and with no
  infinite equivalence classes. Then $\{e: C_e \simeq \A\}$ is
  $\Pi^0_4$ complete.
\end{theorem}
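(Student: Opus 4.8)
For the upper bound I would first record the isomorphism criterion for countable equivalence structures: $\C_e\simeq\A$ if and only if $\C_e$ has the same number of classes of each size as $\A$. Since $\A$ has character $K$ and infinitely many infinite classes, this splits into two conditions on $e$: (a) $\chi(\C_e)=K$, and (b) $\C_e$ has infinitely many infinite equivalence classes. For (b), ``the class of $a$ is infinite'' is $\Pi^0_2$ (for every $n$ there are $n$ distinct elements equivalent to $a$), ``there are at least $k$ infinite classes'' is $\Sigma^0_3$, and ``infinitely many infinite classes'' $=$ ``for all $k$ there are at least $k$'' is $\Pi^0_4$. For (a), $\chi(\C_e)$ is uniformly $\Sigma^0_2$ and $K$ is a fixed $\Sigma^0_2$ set, so each inclusion $\chi(\C_e)\subseteq K$ and $K\subseteq\chi(\C_e)$ is $\Pi^0_3$, whence (a) is $\Pi^0_3$. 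The conjunction is therefore $\Pi^0_4$, giving the upper bound.

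For $\Pi^0_4$-hardness I would reduce from $B=\{e: W_e^{[m]}\text{ is cofinite for every }m\}$, which is $\Pi^0_4$-complete. Given $e$ I construct, uniformly and computably, an equivalence structure $\C_{f(e)}$ whose character is exactly $K$ regardless of $e$ and which has infinitely many infinite classes iff $e\in B$; then $\C_{f(e)}\simeq\A$ iff $e\in B$. The device for converting the outer ``for every $m$'' into a statement about the number of infinite classes is a monotone prefix conjunction: to each $m$ I attach a requirement $\mathcal{R}_m$ charged with producing at least one, but only finitely many, infinite classes exactly when $D_m:=\bigwedge_{i\le m}(W_e^{[i]}\text{ cofinite})$ holds. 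Since $D_m$ is monotone in $m$, the set of $m$ with $D_m$ true is an initial segment, so the total number of infinite classes produced is $\omega$ precisely when every $D_m$ holds, i.e. precisely when $e\in B$.

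Each $D_m$ is $\Sigma^0_3$, which I write as $D_m\equiv\exists t\,E_m(t)$ with $E_m(t)$ the $\Pi^0_2$ statement ``for all $x>t$ and all $i\le m$, $x\in W_e^{[i]}$.'' Requirement $\mathcal{R}_m$ runs, for each candidate threshold $t$, a growing class $C_{m,t}$ that adds a fresh element each time it confirms the next value of $x$; thus $C_{m,t}$ becomes infinite iff $E_m(t)$ holds, and otherwise stalls at a finite size on its own. When $D_m$ fails, some column $W_e^{[i]}$ with $i\le m$ is not cofinite, so every $E_m(t)$ fails and $\mathcal{R}_m$ contributes no infinite class automatically. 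For the character I interleave the requirements with a computable realization of $K$ by finite classes, made available by the hypothesis (the structure $\B$ together with the $s$-function apparatus of \cite{C-C-H-M}). Each candidate class is grown along a demanded finite slot of $K$ — of which there are infinitely many, of unbounded size, since $K$ is unbounded — so that a candidate that never becomes infinite realizes that slot exactly and perturbs nothing, while a candidate promoted to an infinite class has its vacated slot refilled by a fresh finite class of the same size. In this way $\chi(\C_{f(e)})=K$ holds at every stage and in the limit, independently of $e$.

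The principal difficulty is twofold, and the two parts interact. First, when $D_m$ holds the thresholds satisfying $E_m(t)$ form an infinite upward-closed set, so without control $\mathcal{R}_m$ would create infinitely many infinite classes; then a single true prefix with $e\notin B$ would already force infinitely many infinite classes and the reduction would collapse. I must therefore throttle each block by a priority discipline that permits only finitely many of its candidates — essentially the least successful threshold — to become infinite, keeping all higher-indexed candidates frozen against it. Second, this throttling must run simultaneously with the slot-bookkeeping that pins $\chi$ to $K$, since freezing or promoting a candidate changes which classes are finite and hence which demanded slots must be refilled. Getting these two mechanisms to coexist, and verifying that the limiting class sizes yield character exactly $K$ while the number of infinite classes is $\omega$ iff $e\in B$, is the heart of the argument; the surrounding estimates are routine.
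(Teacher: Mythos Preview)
The paper does not prove this theorem; it is quoted from \cite{C-C-H-M} and stated without argument, serving only as input to Theorem~\ref{thm:ge}. There is therefore no proof in this paper against which to compare your attempt.

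On its own merits: your upper-bound computation is correct. For hardness, the architecture is right --- reduce from a $\Pi^0_4$-complete set, keep the character pinned to $K$ throughout, and arrange infinitely many infinite classes exactly on the positive side --- and the monotone-prefix device $D_m=\bigwedge_{i\le m}(W_e^{[i]}\text{ cofinite})$ correctly converts the outer universal quantifier into a count of infinite classes. The throttling you describe (at each stage let only the least live threshold in block $m$ grow) is the standard move and does limit each true block to a single infinite class.

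The one place your sketch is genuinely incomplete is the character maintenance. Failed candidates $C_{m,t}$ (those with $E_m(t)$ false) and frozen candidates (those with $t$ above the eventual winner) leave behind finite classes whose sizes are dictated by the timing of the construction, not chosen freely; you assert these ``realize a demanded slot exactly'' but do not explain the mechanism. The hypothesis that a computable $\B$ with character $K$ and no infinite classes exists is exactly what makes this repairable --- it supplies an $s$-function along which candidates can be grown so that any stall lands on a size already demanded by $K$, and so that a promoted candidate vacates a slot that can be refilled --- but the interleaving of candidate growth with the $s$-function schedule, and the verification that the multiset of limiting finite sizes is precisely that of $\B$, need to be written out. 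As it stands this is a gap in exposition rather than in the underlying idea.
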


We can apply this analysis to $p$-groups for a similar result, using
Theorem \ref{thm:etog}.

\begin{theorem} \label{thm:ge} Let $\G$ be isomorphic to
  $\oplus_{\omega} Z(p^{\infty}) \oplus \HH$, with $\HH$ having
 infinite period and all elements of finite height.  Suppose also that
 there is a computable copy of $\HH$. Then $\{e: \mathcal{C}_e \simeq \G\}$ is
 $\Pi^0_4$ complete.\end{theorem}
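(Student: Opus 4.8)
The plan is to transfer the $\Pi^0_4$-completeness result for equivalence structures (Theorem \ref{thm:ce}) to $p$-groups by way of the construction in Theorem \ref{thm:etog}. First I would observe that $\G$ has the form $\HH' \oplus \bigoplus_\omega Z(p^\infty)$ where $\HH'$ is a direct sum of cyclic $p$-groups with some unbounded character $K$, and that the hypothesis ``there is a computable copy of $\HH$'' translates (via the correspondence between characters and cyclic-group direct sums, cf.\ Lemma \ref{l3} and Corollary \ref{cor2}) into the statement that there is a computable equivalence structure $\B$ with character $K$ and with no infinite equivalence classes. Thus the equivalence-structure hypotheses of Theorem \ref{thm:ce} are satisfied by the corresponding $\A$: take $\A$ to be any computable equivalence structure with character $K$ and infinitely many infinite classes (such an $\A$ exists by the previous paper's constructions, e.g.\ Corollary \ref{cor1}).

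The upper bound ($\Pi^0_4$) is the easy direction: $\C_e \simeq \G$ iff $\G_e$ is isomorphic to $\bigoplus_\omega Z(p^\infty) \oplus \HH$, and by Ulm's theorem this is expressible by saying $\G_e$ is an abelian $p$-group, has Ulm length $1$ (all elements of $\G$ of finite height generate a reduced part, and the divisible part is $\bigoplus_\omega Z(p^\infty)$), that the divisible part is isomorphic to $\bigoplus_\omega Z(p^\infty)$, and that the character of $\G_e$ equals $K$. Using Khisamiev's theorem (the complexity bounds: order is computable, height is $\Sigma^0_1$, the divisible part is $\Pi^0_2$, the character is $\Sigma^0_2$), each conjunct is at worst $\Pi^0_4$; in particular ``$\chi(\G_e) = K$'' is $\Pi^0_3$ (matching Lemma \ref{lem:ke}) when $\G_e$ is already known to be of the right shape, and ``$D(\G_e) \cong \bigoplus_\omega Z(p^\infty)$'' is $\Pi^0_3$, and ``all elements outside $D(\G_e)$ have finite height with the right reduced part'' fits within $\Pi^0_4$. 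A careful bookkeeping of these quantifier counts gives the $\Pi^0_4$ upper bound.

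For the lower bound I would reduce from $\{e : \C_e \simeq \A\}$, which is $\Pi^0_4$ complete by Theorem \ref{thm:ce}. Given an index $e$ for a computable equivalence structure $\C_e$, I apply (a uniform, effective version of) the construction in the proof of Theorem \ref{thm:etog} to produce, uniformly in $e$, an index $f(e)$ for a computable $p$-group $\G_{f(e)}$ that is isomorphic to $\HH_e \oplus \bigoplus_{\alpha_e} Z(p^\infty)$, where $\HH_e$ is a direct sum of cyclic $p$-groups with the same character as $\C_e$ and $\alpha_e$ is the number of infinite equivalence classes of $\C_e$. The key point is that this construction is uniform and that it is a \emph{faithful} translation of the invariants: $\C_e \simeq \C_{e'}$ (as equivalence structures, which by Ulm-type classification for equivalence structures means same character and same number of infinite classes) if and only if $\G_{f(e)} \simeq \G_{f(e')}$. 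In particular $\C_e \simeq \A$ iff $\G_{f(e)} \simeq \G$, so $f$ is a many-one reduction from $\{e : \C_e \simeq \A\}$ to $\{e : \C_e \simeq \G\}$ (after composing $f$ with the obvious index translation into the $\C_e$ enumeration of all computable structures), establishing $\Pi^0_4$-hardness.

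The main obstacle will be the uniformity and faithfulness of the translation: I must check that the construction of Theorem \ref{thm:etog} can be carried out uniformly in an index for the equivalence structure (it clearly can, since every step is a straightforward effective procedure on the atomic diagram), and, more delicately, that no \emph{extra} isomorphisms are introduced — i.e.\ that two equivalence structures giving isomorphic $p$-groups must already have had the same character and the same number of infinite classes. This follows because the character of the constructed group equals the character of the equivalence structure by Theorem \ref{thm:etog}, and the number of $Z(p^\infty)$ summands (hence the rank of the divisible part) equals the number of infinite equivalence classes; combined with Ulm's theorem for $p$-groups of Ulm length $1$, these two invariants determine $\G_{f(e)}$ up to isomorphism, and they are exactly the invariants that determine $\C_e$ up to isomorphism. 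One must also handle the minor technical point of relating the two different index enumerations ($\C_e$ of computable structures vs.\ $\G_e$ of structures with universe $\omega$ and a given computable binary operation), which is routine.
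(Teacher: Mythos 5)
Your proposal is correct and follows essentially the same route as the paper: the $\Pi^0_4$ upper bound is a direct complexity check (via Khisamiev's bounds and Ulm invariants), and the completeness comes from the uniformity of the construction in Theorem \ref{thm:etog}, which yields a computable $f$ with $\mathcal{C}_e \simeq \A$ iff $\G_{f(e)} \simeq \G$, reducing the $\Pi^0_4$-complete set of Theorem \ref{thm:ce}. Your explicit attention to faithfulness of the translation (same character and same number of infinite classes, then Ulm's theorem) and to the hypothesis transfer from ``computable copy of $\HH$'' to the required equivalence structure $\B$ just spells out what the paper leaves implicit.
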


\begin{proof} Fix such a group $\G$ with character $K$, 
and let $\C$ be an equivalence structure with character $K$.  It can be checked that $\{e: \G_e
 \simeq \G\}$ is a $\Pi^0_4$ set. For the completeness, we observe
 that the uniformity of the proof of Theorem \ref{thm:etog} provides a
 computable function $f$ such that $\mathcal{C}_a$ is isomorphic to $\mathcal{C}_b$ if and
 only if $\G_{f(a)}$ is isomorphic to $\G_{f(b)}$. Then $\mathcal{C}_e \simeq \mathcal{C}$
 if and only if $\G_{f(e)} \simeq \G$ and the completeness follows from
 Theorem \ref{thm:ce}.
\end{proof}

\smallskip

This gives the following result for categoricity.
 
\begin{theorem} \label{thm:ndg} Suppose that a computable group $\G$ is isomorphic 
to \[\oplus_{\omega}~Z(p^{\infty})~\oplus~\HH\] for some group $\HH$ with infinite period and all elements of
 finite height, and suppose, in addition, that there is a computable
 group isomorphic to $\HH$. Then $\G$ is not $\Delta^0_2$ categorical.
\end{theorem}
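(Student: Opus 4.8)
The plan is to derive Theorem \ref{thm:ndg} from the $\Pi^0_4$-completeness established in Theorem \ref{thm:ge} together with a general principle: a computable structure whose index set (relative to a fixed enumeration of computable structures in the same language) is $\Pi^0_4$-complete cannot be $\Delta^0_2$ categorical. The reason is that $\Delta^0_2$ categoricity of $\G$ would give an upper bound on the complexity of $\{e : \G_e \simeq \G\}$ that is incompatible with $\Pi^0_4$-completeness. Concretely, if $\G$ were $\Delta^0_2$ categorical, then for every computable copy $\G_e$ of $\G$ there is a $\Delta^0_2(\emptyset') $-computable, i.e. $\Delta^0_2$, isomorphism; but the relevant point is that the existence of any isomorphism between two computable copies can then be detected at a lower level of the arithmetical hierarchy than $\Pi^0_4$. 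I would make this precise as follows.

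First I would recall that for computable $p$-groups of the form in the hypothesis, $\G_e \simeq \G$ is expressed by Ulm's theorem as a statement about agreement of Ulm invariants and Ulm length; for length $\le \omega+1$ (one copy of $\bigoplus_\omega Z(p^\infty)$ over a reduced part of length $\le \omega$) this is naturally $\Pi^0_4$, matching Theorem \ref{thm:ge}. Next, I would argue that $\Delta^0_2$ categoricity forces this index set to be $\Sigma^0_3$ (or at worst $\Pi^0_3$): if every computable copy of $\G$ admits a $\Delta^0_2$ isomorphism onto $\G$, then $\G_e \simeq \G$ iff there exists a $\Delta^0_2$ function (coded by some $\Delta^0_2$ index, i.e. a limit of computable functions) which is an isomorphism $\G \to \G_e$; checking that a given $\Delta^0_2$ function is an isomorphism onto a computable structure is $\Pi^0_2$ relative to $\emptyset'$, hence $\Pi^0_3$, so the whole statement is $\Sigma^0_3$. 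This contradicts $\Pi^0_4$-completeness (which in particular means the index set is not $\Sigma^0_3$, since $\Pi^0_4$-complete sets are properly $\Pi^0_4$). Therefore $\G$ is not $\Delta^0_2$ categorical.

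The main obstacle will be the careful bookkeeping in the middle step — verifying that $\Delta^0_2$ categoricity really does pull the index set down to $\Sigma^0_3$. One must be cautious: the "natural" formulation "$\exists$ $\Delta^0_2$ isomorphism" quantifies over $\Delta^0_2$ objects, and one needs the Shoenfield-style observation that a $\Delta^0_2$ function is given by a total computable function of two arguments whose limit exists, so the existential quantifier over such codes is arithmetical ($\Sigma^0_3$ overall, since asserting the limit exists and is an isomorphism is $\Pi^0_2$ in $\emptyset'$). I would also need the structures $\G_e$ to range over a fixed uniformly computable enumeration, which we have via the $\G_e$ notation introduced just before Lemma \ref{lem:ke}, and I would need the cone of computable copies of $\G$ to be nonempty, which is exactly the hypothesis that there is a computable group isomorphic to $\HH$ (hence to $\G$, adjoining a standard computable copy of $\bigoplus_\omega Z(p^\infty)$).

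Finally I would assemble the pieces: assume toward a contradiction that $\G$ is $\Delta^0_2$ categorical; by the preceding paragraph $\{e : \G_e \simeq \G\}$ is $\Sigma^0_3$; but by Theorem \ref{thm:ge} it is $\Pi^0_4$-complete, hence not $\Sigma^0_3$ — contradiction. One should double-check that Theorem \ref{thm:ge}'s hypotheses are exactly those of Theorem \ref{thm:ndg}; they are, since both require $\bigoplus_\omega Z(p^\infty) \oplus \HH$ with $\HH$ of infinite period, all elements of finite height, and a computable copy of $\HH$. This completes the proof sketch.
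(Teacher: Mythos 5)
Your proposal takes essentially the same route as the paper: assume $\G$ is $\Delta^0_2$ categorical, use indices for $\Delta^0_2$ (limitwise computable) isomorphisms to give an arithmetical definition of $\{e:\G_e\simeq\G\}$, and contradict the $\Pi^0_4$-completeness supplied by Theorem \ref{thm:ge}. One correction: an existential number quantifier over a $\Delta^0_2$-index whose matrix (totality/limit existence plus being an isomorphism onto $\G_e$) is $\Pi^0_3$ gives a $\Sigma^0_4$ bound, not $\Sigma^0_3$ as you claim --- but this is exactly the bound the paper uses, and a $\Pi^0_4$-complete set cannot be $\Sigma^0_4$ either, so the argument goes through unchanged.
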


\begin{proof} If $\G$ were $\Delta^0_2$ categorical, then $\{e: \G_e \simeq
 \G\}$ has a $\Sigma^0_4$ definition. That is, let $M$ be a complete
 c.e. set, let $+$ be $+^{\G}$ and let $+_e$ be $+^{\G_e}$. Then $\G_e
 \simeq \G$ if and only if
\[
(\exists a)[a\in Tot^M \ \wedge\ (\forall m)(\forall n)(
\phi_a^{M}(m+n) = \phi^m_a(m) +_e \phi^M_a(n))].
\]
But this contradicts the $\Pi^0_4$ completeness from Theorem
\ref{thm:ge}. 
\end{proof}

\smallskip

Finally, all of the groups discussed above are certainly relatively
$\Delta^0_3$ categorical.

\begin{theorem} Let $\G$ be a computable group isomorphic to
  $\oplus_{\alpha} Z(p^{\infty}) \oplus \HH$, where $\HH$ has all
  elements of finite height. Then $\G$ is relatively $\Delta^0_3$
  categorical. 
\end{theorem}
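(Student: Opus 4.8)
The plan is to exhibit for $\G$ a computably enumerable Scott family consisting of computable $\Sigma_3$ formulas; by the theorem of Ash, Knight, Manasse, and Slaman \cite{A-K-M-S} (equivalently, Chisholm \cite{C}) this yields relative $\Delta^0_3$ categoricity. We may assume $\HH$ is infinite, since otherwise $\G$ has the form $(1)$ of Theorem \ref{thm:smi} and is relatively computably categorical by Theorem \ref{thm:p1}. Write $\G = \D \oplus \HH$ with $\D = D(\G) = \bigoplus_\alpha Z(p^\infty)$ the divisible part; by Pr\"ufer's theorem $\HH$ is a direct sum of cyclic $p$-groups, $\D \cap \HH = \{0\}$, and since every element of $\HH$ has finite height we have $p^\omega\G = \D$, so the computable $\Pi_2$ formula $\delta(x) := (\forall n)(\exists h)(p^n \cdot h = x)$ defines $\D$ in $\G$. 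It is precisely this $\Pi_2$ --- rather than $\Sigma_1$ --- definability of $\D$ (in the light of Theorem \ref{thm:nrd}) that pushes the bound up to $\Delta^0_3$.

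For a tuple $\ovg = (g_1,\dots,g_n)$, first decompose $g_i = d_i + h_i$ with $d_i \in \D$ and $h_i \in \HH$, and fix a finite pure subgroup $\F \leq \HH$ containing $h_1,\dots,h_n$ (take a finite sub-sum of the cyclic summands of $\HH$; being a direct summand of $\HH$, hence of $\G$, it is pure in $\G$). Choose a basis $z_1,\dots,z_t$ of $\F$ as a direct sum of cyclic groups and let $w_i$ be the word expressing $h_i$ in it. The Scott formula $\psi_{\ovg}(\ovx)$ I would use is
\[
(\exists y_1\cdots y_n)(\exists z_1\cdots z_t)\Big[\, \theta_\F(\vec z)\ \wedge\ \bigwedge\nolimits_i (y_i = w_i(\vec z))\ \wedge\ \bigwedge\nolimits_i \delta(x_i - y_i)\ \wedge\ \mathrm{Pure}(\vec z)\ \wedge\ \rho(\ovx,\vec y) \,\Big],
\]
where $\theta_\F$ is the quantifier-free formula recording the orders of the $z_k$ and that $\la\vec z\ra$ is their internal direct sum; $\mathrm{Pure}(\vec z)$ is the statement ``$\la\vec z\ra$ is pure in $\G$'', namely $(\forall a\in\la\vec z\ra)(\forall m)[(\exists x)(p^m x = a) \Implies (\exists c\in\la\vec z\ra)(p^m c = a)]$, which, $\la\vec z\ra$ being finite, is a computable $\Pi_1$ formula (and it forces $\la\vec z\ra\cap\D = \{0\}$, since a nonzero divisible element would have infinite height in the finite group $\la\vec z\ra$); and $\rho(\ovx,\vec y)$ is the quantifier-free formula recording the orders of the $x_i - y_i$ and precisely which $\Z$-linear combinations $\sum_i c_i(x_i - y_i)$, with each $c_i$ below the corresponding order, vanish. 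By the analysis of tuples from $\bigoplus_\alpha Z(p^\infty)$ in the proof of Theorem \ref{thm:p1}$(1)$, $\rho$ determines the $\mathrm{Aut}(\D)$-orbit of $(d_1,\dots,d_n)$. Since $\delta$ is computable $\Pi_2$ while the other conjuncts are $\Pi_1$ or quantifier-free, the matrix is computable $\Pi_2$, so $\psi_{\ovg}$ is computable $\Sigma_3$; and the family of all $\psi_{\ovg}$, indexed by the finite data (isomorphism type of $\F$ with the marked words $w_i$, and the orders/relations in $\rho$), is c.e.

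It remains to verify the two Scott-family clauses. That $\ovg$ satisfies $\psi_{\ovg}$ is immediate, taking $y_i := h_i$ and $\vec z$ the chosen basis of $\F$. The substantive step --- which I expect to be the main obstacle --- is: if tuples $\ovg$ and $\ovg'$ satisfy the same formula $\psi$, then they are automorphic. Let $\vec z,\vec y$ witness $\ovg\models\psi$ and $\vec z',\vec y'$ witness $\ovg'\models\psi$; put $\F=\la\vec z\ra$, $\F'=\la\vec z'\ra$, $d_i=g_i-y_i$, $d_i'=g_i'-y_i'$ (these lie in $\D$ by the conjuncts $\delta(x_i-y_i)$). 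The recorded isomorphism type $\theta_\F$ gives an isomorphism $\iota\colon\F\to\F'$ with $\iota(y_i)=y_i'$. Each of $\F,\F'$ is finite and pure, hence a direct summand by Kulikov's theorem (Theorem \ref{thm:ds}): $\G=\F\oplus\C=\F'\oplus\C'$. The obstacle is that $\F,\F'$ need \emph{not} lie inside $\HH$ --- only $\F\cap\D=\{0\}$ is forced --- so the complements cannot be matched by inspection. Instead I would argue via invariants: $p^\omega\C=p^\omega\G=\D=p^\omega\C'$, and this is already divisible, so $\D=D(\C)=D(\C')$; writing $\C=\D\oplus\C^{\mathrm{red}}$ we get $p^\omega\C^{\mathrm{red}}=0$, so $\C^{\mathrm{red}}$, and likewise the reduced part of $\C'$, is a direct sum of cyclic groups (Pr\"ufer) whose character is read off from $u_m(\C)=u_m(\G)-u_m(\F)=u_m(\G)-u_m(\F')=u_m(\C')$; hence $\C\cong\C'$, and moreover (since $\D$ is a direct summand of each by Baer's theorem, Theorem \ref{thm:ds}, with isomorphic complements) any automorphism $\sigma_\D$ of $\D$ extends to an isomorphism $\sigma\colon\C\to\C'$ restricting to $\sigma_\D$ on $\D$. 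Finally, $\rho$ recorded the common orders-and-relations of the divisible tuples $(d_i)$ and $(d_i')$, so Theorem \ref{thm:p1}$(1)$ supplies such a $\sigma_\D$ with $\sigma_\D(d_i)=d_i'$; with $\Phi:=\iota\oplus\sigma$ we obtain an automorphism of $\G=\F\oplus\C=\F'\oplus\C'$ satisfying $\Phi(g_i)=\iota(y_i)+\sigma_\D(d_i)=y_i'+d_i'=g_i'$. Hence $\{\psi_{\ovg}\}$ is a c.e.\ Scott family of computable $\Sigma_3$ formulas, and $\G$ is relatively $\Delta^0_3$ categorical.
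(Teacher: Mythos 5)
Your proposal is correct and takes essentially the same route as the paper: the paper's proof likewise uses the $\Pi_2$ definability of $D(\G)$ together with the finite-pure-subgroup formulas of Theorem \ref{thm:red}, forming computable $\Sigma_3$ Scott formulas that record the atomic diagram of a finite divisible piece plus a finite pure subgroup around the tuple. Your Kulikov-splitting and complement-matching verification simply spells out details the paper leaves implicit.
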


\begin{proof} The divisible part $D(\G)$ can be defined by a $\Pi^0_2$
  sentence.  The definition of the Scott sentences builds on that of
  Theorem \ref{thm:red}. Given a finite pure subgroup $\F$ and a
  finite subgroup $\D$ of $D(\G)$, we define the formula
  $\phi_{\ovg,\F,\D}$ as before to give the atomic diagram of $\D
  \oplus \F [\ovg]$. Any such formula satisfied by $\ovg$ will be a
  Scott formula. 
\end{proof}

\smallskip

There is a stronger result for groups $\G$ with $D(\G)$ computable. 

\begin{theorem} \label{nt1} For any two isomorphic computable
Abelian $p$-groups $\G_1$ and $\G_2$ of length $\leq \omega$ such that
$D(\G_1)$ and $D(\G_2)$ are both computable, $\G_1$ and $\G_2$ are $\Delta
_{2}^{0}$ isomorphic.
\end{theorem}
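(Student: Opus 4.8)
The plan is to build a $\Delta^0_2$ isomorphism $\G_1 \to \G_2$ by a back-and-forth construction in which the divisible parts are peeled off first and the reduced parts are matched using the classification by Ulm invariants. Since $D(\G_1)$ and $D(\G_2)$ are computable by hypothesis, and since both groups have length $\leq\omega$, Theorem~\ref{thm:cls} (Baer) gives decompositions $\G_j = \A_j \oplus D(\G_j)$ where $\A_j$ is reduced with all elements of finite height, hence a direct sum of cyclic $p$-groups by Theorem~\ref{thm:cls} (Pr\"ufer). Because $D(\G_j)$ is a computable subgroup, the quotient $\G_j/D(\G_j) \cong \A_j$ is a \emph{computable} reduced Abelian $p$-group, and the natural projection is computable; moreover $D(\G_j) \cong \oplus_{\beta_j} Z(p^\infty)$ where $\beta_j \leq \omega$ is determined by how many elements of each order $D(\G_j)$ has, a computable count. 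Since $\G_1 \cong \G_2$, Ulm's theorem forces $\beta_1 = \beta_2$ and forces the reduced parts $\A_1$ and $\A_2$ to have the same character (equivalently the same $u_n$-invariants for $n<\omega$).

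First I would handle the divisible parts: fix computable copies $D_j$ of $D(\G_j)$ inside $\G_j$ and observe that with $\emptyset'$ one can, for each $k$, enumerate the finitely many elements of order $\leq p^k$ in each $D_j$ and build, level by level, a $\Delta^0_2$ isomorphism $D(\G_1) \to D(\G_2)$ (this is just the relative computable categoricity of $\oplus_\beta Z(p^\infty)$ from Theorem~\ref{thm:p1}, applied with the oracle needed to locate the computable copies). Next I would handle the reduced parts. Here the key point is that each $\A_j$, being a computable direct sum of finite cyclic $p$-groups, has a $\Delta^0_2$ ``basis enumeration'': using $\emptyset'$ one can detect whether a finite pure subgroup admits no proper pure subgroup (the $\Pi_1$ condition from Theorem~\ref{thm:red}), and thereby produce in the limit an increasing sequence of finite pure summands $\F^1_0 \subseteq \F^1_1 \subseteq \cdots$ exhausting $\A_1$, together with their characters. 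Do the same for $\A_2$. Because $\A_1$ and $\A_2$ have equal character, one can match these finite pure summands off against each other $\Delta^0_2$-uniformly: at stage $s$, having an isomorphism of $\F^1_{i}$ onto a pure summand of $\A_2$, extend it to cover the next generator of $\A_1$ (forward step) and the next generator of $\A_2$ (backward step), each time using purity and Theorem~\ref{thm:ds} (Kulikov) to know the partial isomorphism between pure finite summands extends to the ambient groups.

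Finally I would splice the two pieces: an isomorphism $\G_1 = D(\G_1) \oplus \A_1 \to D(\G_2) \oplus \A_2 = \G_2$ is obtained by taking the direct sum of the divisible-part isomorphism and the reduced-part isomorphism, translating back through the (computable) projections $\G_j \to \G_j/D(\G_j)$ and the fixed splittings; the whole construction runs with a $\emptyset'$ oracle, so the resulting map is $\Delta^0_2$. I expect the main obstacle to be the reduced-part matching: one must verify that the ``find the next indecomposable pure summand'' search really is a $\emptyset'$-computable procedure producing a nested sequence that exhausts $\A_j$ and whose characters stabilize correctly, and that the back-and-forth bookkeeping keeps the partial isomorphisms coherent in the limit even though individual approximations to ``$\F$ is a pure summand'' may change finitely often. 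Ensuring that the forward and backward steps can always be carried out — i.e. that equal characters of $\A_1,\A_2$ genuinely let every finite pure summand on one side be embedded as a pure summand on the other — is where Theorem~\ref{thm:ds} and a careful induction on the number of generators do the real work.
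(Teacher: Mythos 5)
There is a genuine gap, and it sits exactly at your final ``splice'' step. You build a $\Delta^0_2$ isomorphism of the divisible parts and a $\Delta^0_2$ isomorphism of the reduced parts, where your reduced part is realized as the quotient $\G_j/D(\G_j)$ (which is indeed a computable group, since $D(\G_j)$ is computable). But to turn these two maps into a map $\G_1\to\G_2$ you must, for each $g\in\G_1$, produce the decomposition $g=a+d$ with $d\in D(\G_1)$ and $a$ in some fixed complement, and likewise reassemble on the $\G_2$ side. Baer's theorem gives only the abstract existence of a complement $\A_j$ with $\G_j=\A_j\oplus D(\G_j)$; the computable projection $\G_j\to\G_j/D(\G_j)$ has no distinguished section, and nothing in your argument produces a $\Delta^0_2$ splitting. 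Phrases like ``the fixed splittings'' assume precisely what has to be proved: without an effectively presented complement sitting inside $\G_j$, the direct sum of your two partial isomorphisms is not defined on $\G_j$ at all, and lifting the quotient isomorphism coherently with the divisible-part isomorphism is a nontrivial extension problem.

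This is the point the paper's proof is really about. It constructs a \emph{computable} complement $\HH_i\subseteq\G_i$ directly: $\HH_i$ is the union of finite pure subgroups $\A_s$, where $\A_{s+1}=\la\A_s\cup\{g\}\ra$ for the least $g$ with $\la\A_s\cup\{g\}\ra\cap D(\G_i)=\{0\}$ (this greedy search is where the computability of $D(\G_i)$ is used), and then one verifies by induction on the order of an element, using purity of $\HH_i$ and divisibility of $D(\G_i)$, that $\G_i=D(\G_i)\oplus\HH_i$. Once both summands are computable subgroups, the coordinate decomposition $g\mapsto(d,h)$ is computable by search, $D(\G_1)$ and $D(\G_2)$ are computably isomorphic (computable copies of $\oplus_\beta Z(p^\infty)$ are computably categorical, so no $\emptyset'$ is needed there), $\HH_1$ and $\HH_2$ are $\Delta^0_2$ isomorphic by Theorem \ref{thm:red}, and the splice is immediate. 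Incidentally, your back-and-forth on characters for the reduced parts re-proves Theorem \ref{thm:red} rather than citing it; that part is fine but unnecessary. The missing ingredient you need to add is the effective construction of a complement of the divisible part --- with that, your outline collapses to the paper's argument.
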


\begin{proof} We will construct computable subgroups $\HH_1$ and $\HH_2$
  such that $\G_i = D(\G_i) \oplus \HH_i$. The subgroup $\HH_i$ will be defined as
  the union of a computable sequence $\A_s$ of pure finite
  subgroups. $\A_0 = \{0\}$. Given $\A_s$, find the least element $g
  \notin \A_s$ such that $\la \A_s \cup \{g\} \ra \cap D(\G_i) = \{0\}$
  and let $\A_{s+1} = \la \A_s \cup \{g\}\ra$. Then for each $s$,
  $D(\G_i) \cap \A_s = \{0\}$ and therefore $D(\G) \cap \HH_i =
  \{0\}$. The factor group $\HH_i$ is computable since $s \in \HH_i$ if and
  only if $s \in \A_{s+1}$. We argue by induction on the order of $g$
  that any element $g \in \G$ belongs to $D(\G) \oplus \HH_i$. For the
  initial case, suppose that $p \cdot g = 0$. Then either $g \in
  \A_{g+1}$ or else $a + g = d \neq 0$ for some $d \in D(\G)$. But in
  the latter case, $g = a - d \in D(\G) \oplus \HH_i$ as desired. Now
  suppose all elements of order $< p^m$ belong to $D(\G) \oplus \HH_i$
  and let $p^m g = 0$. Then $h = p \cdot g = a + d$ for some $a \in
  \HH_i$ and $d \in D(\G)$. Since $d$ is divisible, we can choose $d'$ so $d = pd'$. Then we have $a = p\cdot (g - d')$ so that, since $\HH_i$ is
  pure, $a = p \cdot a'$ for some $a' \in \HH_i$. Now $p \cdot (g - d' -
  a') = 0$, so that $g - d' - a' \in D(\G) \oplus \HH_i$ by the initial
  case.  Therefore, $g \in D(\G) \oplus \HH_i$ as desired.

It follows from Proposition \ref{thm:smi} that $D(\G_1)$ and $D(\G_2)$ are
computably isomorphic, and it follows from Theorem \ref{thm:red} that
$\HH_1$ and $\HH_2$ are $\Delta^0_2$ isomorphic. Now, the two
corresponding isomorphisms may be combined into a $\Delta_2^0$
isomorphism between $\G_1$ and $\G_2$.
\end{proof}

\section{Groups of length $> \omega$}

Index set results can tell us something about many of the groups of
greater length.  By using the calculations in Theorems 5.6, 5.15,
and 5.16 of \cite{idxsets}, we can prove the following.

\begin{theorem} Let $\G$ be an Abelian $p$-group.
\begin{enumerate}
\item If $\lambda(\G) = \omega \cdot n$ and $m \leq 2n-1$, then $\G$
  is not $\Delta^0_m$-categorical.
\item If $\lambda(\G) > \omega \cdot n$ and $m \leq 2n-2$, then $\G$
  is not $\Delta^0_m$-categorical.
\item If $\lambda(\G) = \omega \cdot n + k$ where $k \in \omega$, and
  $\HH$ is the reduced part of $\G$, the following hold:
\begin{enumerate}
\item If $\HH_{\omega n}$ is isomorphic to
  $\mathbb{Z}_{p^k}$ and $m \leq 2n-1$, then $\G$ is not
  $\Delta^0_m$-categorical.
\item If $\HH_{\omega n}$ is finite but not
  isomorphic to $\mathbb{Z}_{p^k}$ and $m \leq 2n-1$, then $\G$ is not
  $\Delta^0_m$-categorical.
\item If there is a unique $j < k$ such that $u_{\omega n + j}(\HH) =
  \infty$, and $m \leq 2n$, then $\G$ is not
  $\Delta^0_m$-categorical.
\item If there are distinct $i,j < k$ such that $u_{\omega n + i}(\HH)
  = u_{\omega n + j}(\HH) = \infty$ and $m \leq 2n+1$, then $\G$ is not
  $\Delta^0_m$-categorical.
\end{enumerate}
\end{enumerate}
\end{theorem}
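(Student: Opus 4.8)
The plan is to derive every part from the index-set calculations of \cite{idxsets}, in the manner of the proof of Theorem \ref{thm:ndg}. The only general tool required is the observation, already implicit there, that \emph{if a computable structure $\G$ is $\Delta^0_m$-categorical, then its isomorphism index set $I(\G)=\{e:\G_e\simeq\G\}$ is $\Sigma^0_{m+2}$}. To see this, set $M=\emptyset^{(m-1)}$, so that every $\Delta^0_m$ function is $M$-computable; by $\Delta^0_m$-categoricity, $\G_e\simeq\G$ iff there is an index $a$ with $\phi_a^M$ total and a bijective homomorphism from $\G$ onto $\G_e$. Totality and surjectivity of $\phi_a^M$ are $\Pi^0_2$ relative to $M$, hence $\Pi^0_{m+1}$; being a homomorphism and being injective are $\Pi^0_1$ relative to $M$, hence also $\Pi^0_{m+1}$; prefixing the quantifier $(\exists a)$ gives a $\Sigma^0_{m+2}$ definition (for $m=2$ this is exactly the $\Sigma^0_4$ bound used in Theorem \ref{thm:ndg}). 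Since a $\Delta^0_m$ function is also $\Delta^0_{m'}$ whenever $m\le m'$, non-$\Delta^0_{m'}$-categoricity implies non-$\Delta^0_m$-categoricity for every $m\le m'$, so in each part it is enough to rule out the largest admissible $m$.

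Taking contrapositives, ``$\G$ is not $\Delta^0_m$-categorical'' follows once $I(\G)$ fails to be $\Sigma^0_{m+2}$, and for that it suffices that $I(\G)$ be $\Pi^0_j$-hard for some $j\ge m+2$: were $I(\G)$ in $\Sigma^0_{m+2}$, then, as $\Sigma^0_{m+2}$ is closed under many-one reductions, we would get $\Pi^0_j\subseteq\Sigma^0_{m+2}$, hence $\Pi^0_{m+2}\subseteq\Sigma^0_{m+2}$, contradicting the hierarchy theorem. So the whole proof reduces to recording, for each group $\G$ in the statement, the level $j$ at which Theorems 5.6, 5.15, and 5.16 of \cite{idxsets} show $I(\G)$ to be $\Pi^0_j$-hard, and checking that $j=m+2$ for the extreme admissible $m$. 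Concretely, the cited calculations give $\Pi^0_{2n+1}$-hardness when $\lambda(\G)=\omega\cdot n$ (part (1)); $\Pi^0_{2n}$-hardness when $\lambda(\G)>\omega\cdot n$, essentially from the first $n$ Ulm layers of $\G$ (part (2)); and, when $\lambda(\G)=\omega\cdot n+k$, $\Pi^0_{2n+1}$-hardness if $\HH_{\omega n}\simeq\mathbb{Z}_{p^k}$ or $\HH_{\omega n}$ is finite but not $\simeq\mathbb{Z}_{p^k}$ (parts (3a), (3b)), $\Pi^0_{2n+2}$-hardness if exactly one $u_{\omega n+j}(\HH)$ with $j<k$ is infinite (part (3c)), and $\Pi^0_{2n+3}$-hardness if two are (part (3d)); the $2n$ reflects the two arithmetical quantifiers it costs to pass each limit level of the Ulm hierarchy, and the extra $0$, $1$, $2$ or $3$ levels come from matching the top Ulm factor $\HH_{\omega n}$ and the infinite Ulm invariants above it. In each case $j-2$ equals the bound on $m$ asserted, so the theorem follows.

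The substantive content lies in \cite{idxsets}: the work is the family of uniform reductions that code a $\Pi^0_j$-complete predicate into a computable sequence of $p$-groups of the prescribed Ulm type, keeping the Ulm-invariant data carrying the predicate at precisely the intended arithmetical level — this is where the ``two quantifiers per limit level'' bookkeeping must be exact, and it is the one delicate point. Granting those theorems, the remaining argument is purely the bookkeeping of the previous paragraph: align the hardness level $j$ of each case against the $\Sigma^0_{m+2}$ upper bound of paragraph one and invoke $\Pi^0_{m+2}\not\subseteq\Sigma^0_{m+2}$. I expect no further group theory beyond what is already used to set up the index sets.
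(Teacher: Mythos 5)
Your proposal takes essentially the same route as the paper: the paper's entire proof of this theorem is the appeal to Theorems 5.6, 5.15, and 5.16 of \cite{idxsets}, combined implicitly with the index-set upper bound that the paper spells out only for $m=2$ in Theorem \ref{thm:ndg}, and your $\Sigma^0_{m+2}$ bound is the correct generalization of that argument. The hardness levels you attribute to \cite{idxsets} are exactly those the theorem's numerology requires, so your write-up is correct at the same level of detail as the paper's, i.e., modulo the cited index-set calculations.
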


\begin{cor} Let $\G$ be a computable Abelian $p$-group whose
  reduced part has a computable copy, and suppose that $\HH$ is the
  reduced part of $\G$.  Then if $\HH$ has infinitely
  many elements of height at least $\omega$, then $\G$ is not
  $\Delta^0_2$-categorical.\end{cor}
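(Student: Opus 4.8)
The plan is to derive the corollary from the preceding theorem by locating the right case in its case analysis. Suppose $\G$ is a computable Abelian $p$-group whose reduced part $\HH$ has a computable copy and contains infinitely many elements of height at least $\omega$. The first step is to observe what "infinitely many elements of height $\geq \omega$" tells us about the Ulm invariants: it means that $p^{\omega}\HH = \HH^1$ is infinite, so in particular $\HH^1 \neq \HH^0$, hence $\lambda(\HH) \geq 1$; and since $\HH^1$ is infinite, the Ulm factor $\HH_0 = \HH^0/\HH^1$ together with the higher structure forces $\lambda(\HH) = \omega\cdot n + k$ with $n \geq 1$. Concretely I would just split into the cases of the theorem with $n = 1$: in every subcase of part (3) with $n=1$, and in part (1) with $\lambda(\G) = \omega$, the conclusion includes "$\G$ is not $\Delta^0_m$-categorical" for $m \leq 2n-1 = 1$ at minimum, and we want $m = 2$.

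So the real content is: the hypothesis that $\HH^1$ is \emph{infinite} should push us into a subcase where the bound is $m \leq 2n$ or better with $n \geq 1$, i.e.\ into subcase (3)(c) or (3)(d) (or the length $> \omega\cdot n$ clause of part (2) with $n=1$, giving $m \leq 2n-2 = 0$, which is not enough, so that branch must be excluded or handled differently). The key step is to show that "$\HH$ has infinitely many elements of height $\geq \omega$" implies either $u_{\omega\cdot 1 + j}(\HH) = \infty$ for some $j$ — landing us in (3)(c) with $n=1$, giving non-$\Delta^0_2$-categoricity — or that $\lambda(\HH) > \omega$ with the relevant Ulm factor infinite. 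I would argue: if $\HH^{\omega}$ is infinite and $\HH$ is reduced of some length $\mu$, then by Ulm's theorem $\HH^{\omega}$ is itself a nontrivial reduced $p$-group, and being infinite it must have $u_{\omega + j}(\HH) = \dim P_{\omega+j}(\HH)/P_{\omega+j+1}(\HH)$ equal to $\infty$ for some $j$, \emph{or} $\HH^{\omega}$ itself has an element of height $\geq \omega$ relative to $\HH$, i.e.\ $\HH^{2\omega} \neq 0$, i.e.\ $\lambda(\HH) > \omega$ — and then induction/iteration reduces to the previous situation one Ulm level up while never decreasing the relevant bound below $2$. The cleanest packaging is: some $u_{\omega + j}(\HH) = \infty$ (infinitely many height-$(\omega+j)$ elements at that exact level), which is exactly the hypothesis of subcase (3)(c) or (3)(d) with $n \geq 1$, and in both of those the theorem gives non-$\Delta^0_m$-categoricity for $m \leq 2n \geq 2$.

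The main obstacle I anticipate is the bookkeeping needed to guarantee we land strictly inside a subcase with bound $\geq 2$ rather than in part (2) with $n=1$ (bound $m \leq 0$, useless) — in other words, ruling out the possibility that the infiniteness of $\HH^{\omega}$ is "spread thin" across infinitely many Ulm levels above $\omega$ in a way that makes each individual $u_{\omega+j}(\HH)$ finite while still forcing $\lambda(\HH)$ large. If that genuinely can happen, one must instead invoke the large-length clauses: if $\lambda(\HH) = \omega\cdot n$ with $n \geq 2$ then part (1) gives $m \leq 2n-1 \geq 3$, and if $\lambda(\HH) > \omega\cdot n$ with $n \geq 1$ then part (2) gives $m \leq 2n - 2$, which is $\geq 2$ once $n \geq 2$. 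So the delicate point is precisely the boundary $n=1$: I must show that if $\lambda(\HH)$ is exactly $\omega\cdot 1 + k$ (or just $> \omega$ but "small"), then the only way to have infinitely many elements of height $\geq \omega$ is to have some $u_{\omega+j}(\HH) = \infty$, invoking Ulm's theorem and the finiteness of the relevant Ulm factors in the remaining cases. Once that dichotomy is established, the corollary follows by quoting the appropriate line of the theorem with $m=2$.
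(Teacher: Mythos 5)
Your proposal is correct and follows essentially the same route as the paper, which states this corollary without a separate proof as a direct case analysis of the preceding theorem: if $p^{\omega}\HH$ is bounded, its infiniteness forces some $u_{\omega+j}(\HH)=\infty$, landing in case (3)(c) or (3)(d) with $n=1$ (bounds $m\leq 2n=2$ or $2n+1=3$), while if it is unbounded then $\lambda(\G)\geq \omega\cdot 2$ and cases (1) or (2) with $n\geq 2$ give bounds of at least $2$. Your one slip, the provisional dichotomy ``some $u_{\omega+j}(\HH)=\infty$ or $\HH$ has elements of height at least $\omega\cdot 2$'' (false when $p^{\omega}\HH$ is an unbounded direct sum of cyclics with finite Ulm invariants), is exactly the obstacle you flag and is superseded by the bounded/unbounded resolution you give at the end, so the derivation is complete.
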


Results of Barker \cite{barker} give the following additional
information.

\begin{theorem}[Barker] Let $\G$ be a countable reduced Abelian $p$-group
  with computable Ulm invariants such that $\lambda(\G) = \omega \alpha
  + \omega + n$ and $\G_{\omega \alpha + \omega}$ is finite.  Then
  $\G$ is relatively $\Delta^0_{2 \alpha + 2}$-categorical but not $\Delta^0_{2
  \alpha + 1}$-categorical.\end{theorem}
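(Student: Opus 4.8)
The plan is to recover Barker's argument by combining the structure theory of reduced Abelian $p$-groups (Ulm's theorem and the summand results of Theorem~\ref{thm:ds}) with the Ash--Knight theory of back-and-forth relations and the syntactic characterization of relative $\Delta^0_\beta$ categoricity (existence of a formally $\Sigma^0_\beta$ Scott family) recalled in Section~1. The arithmetical bookkeeping is governed by one principle: each limit stage in the Ulm filtration costs two quantifier alternations, since $p^{\omega}\G=\bigcap_n p^n\G$ is $\Pi^0_2$ and, inductively and uniformly, $p^{\omega\gamma}\G$ is $\Pi^0_{2\gamma}$, so the Ulm factor $\G_\gamma$ and its $p$-socle are visible near level $2\gamma$ and the local Ulm invariants one level higher. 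Because $\lambda(\G)=\omega\alpha+\omega+n$ with $\G_{\omega\alpha+\omega}$ finite and only finitely many further Ulm factors, every genuinely infinitary datum of $\G$ (its infinite Ulm invariants, all occurring strictly below the block at $\omega\alpha+\omega$) already appears below level $2\alpha+2$; that is where the two bounds will meet.

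For the upper bound I would exhibit a formally $\Sigma^0_{2\alpha+2}$ Scott family. For a tuple $\ovg$, the Scott formula asserts the existence of a finite subgroup $\F\ni\ovg$ that is \emph{isotype} in $\G$ up to the relevant level — an iterated purity condition generalizing through the Ulm hierarchy the $\Pi_1$ purity formula displayed in the proof of Theorem~\ref{thm:red}, and so a computable $\Pi_{2\alpha+1}$ condition — and that records the correct heights of $\ovg$ and of its $\Z_p$-linear combinations together with the correct local Ulm data up through height $\omega\alpha+\omega+n$, including a full description of the finite top factor $\G_{\omega\alpha+\omega}$ and the finite tail. By the complexity principle above these are computable $\Sigma_{2\alpha+2}$ formulas; that tuples satisfying the same such formula are automorphic follows from Ulm's theorem together with Theorem~\ref{thm:ds} (a finite isotype subgroup realizing the correct height pattern is a direct summand, and any two complements are isomorphic), extending a matching of the finite ``seen'' pieces to an automorphism of $\G$ by a back-and-forth between the two complementary reduced parts. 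The characterization from Section~1 then yields relative $\Delta^0_{2\alpha+2}$ categoricity. (Equivalently, one may verify that the Ash--Knight back-and-forth relations for reduced $p$-groups, as computed by Barker, are given at level $2\alpha+2$ by computable $\Pi_{2\alpha+2}$ formulas, and invoke the metatheorem.)

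For the lower bound I would construct two computable copies $\G_1,\G_2\cong\G$ admitting no $\Delta^0_{2\alpha+1}$ isomorphism, by an Ash--Knight ``pair of structures'' priority construction relative to $\emptyset^{(2\alpha)}$. Concretely, one fixes an Ulm-theoretic datum of $\G$ whose natural definition sits at the $(2\alpha+2)$-nd level — for instance a bit of an infinite Ulm invariant $u_{\omega\alpha+j}(\G)$, or the placement of the finite top factor among two designated ``$2\alpha$-deep'' slots — and arranges the two copies so that a $\Sigma^0_{2\alpha+1}$-complete set $S$ is coded into which copy realizes that datum ``early'' versus ``late''. Any isomorphism $\G_1\to\G_2$ must then decide $S$, hence compute a set not below $\emptyset^{(2\alpha)}$, and so cannot be $\Delta^0_{2\alpha+1}$. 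The finiteness hypotheses on $\G_{\omega\alpha+\omega}$ and on the tail are exactly what keep the construction at $2\alpha+2$ and no higher: relaxing them (an infinite top Ulm factor, or two infinite invariants above level $\omega\alpha$) frees up the additional level(s) recorded in parts~(3)(c)--(d) of the preceding theorem.

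The main obstacle is the exact alignment at the fencepost $2\alpha+2$ versus $2\alpha+1$: one must simultaneously certify that the back-and-forth data at level $2\alpha+2$ is definable at that level (so the Scott family is genuinely $\Sigma^0_{2\alpha+2}$ and not one level worse) and that the priority construction genuinely requires all $2\alpha$ jumps (so $\Delta^0_{2\alpha+1}$ isomorphisms are truly impossible). Showing that the finite top factor $\G_{\omega\alpha+\omega}$ together with the finite tail of length $n$ contributes precisely ``$+2$'' — i.e., that the ``one limit $=$ two jumps'' accounting survives unchanged through the top finite layer — is where essentially all of the work sits; the group-theoretic ingredients (Ulm's theorem, isotype-implies-summand via Theorem~\ref{thm:ds}, Theorem~\ref{thm:cls}) and the general computability-theoretic ingredients (Ash's theorem on computable infinitary formulas, the Scott-family characterization) are entirely off the shelf.
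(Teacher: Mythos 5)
This statement is imported from Barker's paper; the present paper gives no proof of it, so your outline has to stand on its own, and as it stands it has a genuine gap on both halves. For the upper bound, the central device --- a Scott formula asserting that $\ovg$ lies in a finite subgroup $\F$ that is isotype in $\G$ ``up to the relevant level,'' which is then split off as a direct summand via Theorem \ref{thm:ds} --- cannot work once $\lambda(\G)>\omega$. Since $\lambda(\G)=\omega\alpha+\omega+n$, we have $p^{\omega}\G\neq 0$, so there are tuples containing elements of height $\geq\omega$; but for a finite $\F$ one has $p^{\omega}\F=0$, so any $\F$ satisfying $\F\cap p^{\sigma}\G=p^{\sigma}\F$ through the limit levels meets $p^{\omega}\G$ trivially and cannot contain such a tuple. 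Your Scott family is therefore empty exactly on the tuples that make the theorem nontrivial. Kulikov's theorem only splits off \emph{bounded pure} subgroups, and purity controls only finite heights --- the part of the structure that does not force the level $2\alpha+2$. Barker's actual upper bound records, for every element of the finite subgroup generated by $\ovg$, its (truncated) height with respect to the filtration $p^{\omega\gamma+k}\G$, and obtains the automorphism from the classical extension theorem for height-preserving isomorphisms between finite subgroups of countable reduced $p$-groups (an Ulm--Zippin back-and-forth), not from splitting off a finite summand; your quantifier bookkeeping ($p^{\omega\gamma}\G$ being $\Pi_{2\gamma}$, hence height data at level $2\alpha+1$ and Scott formulas at $2\alpha+2$) is fine, but the automorphism-extension mechanism you cite is the wrong one.

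For the lower bound you state the intended machinery (an Ash--Knight pair-of-structures priority construction over $\emptyset^{(2\alpha)}$) but do not produce the witnesses it needs: a pair of non-automorphic configurations that are back-and-forth equivalent at level $2\alpha+1$, a verification that the hypothesis of computable Ulm invariants suffices to build the computable copies and run the construction, and the check that the finite factor $\G_{\omega\alpha+\omega}$ plus the finite tail keeps everything at exactly $2\alpha+2$. You flag this ``fencepost alignment'' as the main obstacle, but that alignment --- the computation of the standard back-and-forth relations for reduced $p$-groups at levels $2\alpha+1$ and $2\alpha+2$ --- is the entire content of Barker's theorem rather than a detail to be deferred. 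So the proposal is a reasonable map of where a proof would live, but neither bound is established, and the one concrete structural idea it commits to (finite isotype summands) fails precisely above $p^{\omega}\G$.
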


Finally, we can prove the following result on relative categoricity:

\begin{theorem} Let $\G$ be a computable Abelian $p$-group with
  $\lambda(G) > \omega$ whose reduced part has no computable copy.
  Then $\G$ is not relatively $\Delta^0_2$-categorical. \end{theorem}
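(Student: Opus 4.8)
The plan is to prove the contrapositive structure: if $\G$ is relatively $\Delta^0_2$-categorical, then the reduced part of $\G$ has a computable copy. Suppose $\G = \A \oplus D(\G)$ (by Baer, Theorem~\ref{thm:cls}) with $\A$ the reduced part, and suppose $\lambda(\G) > \omega$, so $\A$ has an element of height $\geq \omega$, i.e. $p^\omega \A \neq \{0\}$. Because $\G$ is relatively $\Delta^0_2$-categorical, by Theorem~\ref{thm:ds}(a)$\Leftrightarrow$(b) it has a formally $\Sigma^0_2$ Scott family. The first step is to show that possession of such a Scott family, together with a computable presentation of $\G$, suffices to produce a $\Delta^0_2$ (in fact, I expect merely $0'$-computable) presentation of the reduced part $\A$; then I would argue separately that the Scott family can be exploited to \emph{lower} this to a genuinely computable presentation, deriving a contradiction with the hypothesis that $\A$ has no computable copy.

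First I would observe that in any computable presentation of $\G$, the set $\{g : ht(g) \geq \omega\}$ — equivalently $p^\omega G$ — is $\Pi^0_2$ (it is $\bigcap_n p^n G$, and each $p^n G$ is $\Sigma^0_1$). More is true: $D(\G) \subseteq p^\omega G$, but the two can differ. The key technical point is that relative $\Delta^0_2$-categoricity forces uniformity that pins down the divisible part more sharply. Concretely, I would run the following argument: take a second computable copy $\widehat\G$ of $\G$ built by hand so that $D(\widehat\G)$ is as simple as possible — for instance, arrange $\widehat\G = \widehat D \oplus \widehat\A$ where $\widehat D$ is placed on a computable set of coordinates; this is always achievable for the divisible part alone since $\oplus_\alpha Z(p^\infty)$ has a standard computable presentation with computable domain. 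By relative $\Delta^0_2$-categoricity there is a $\Delta^0_2$ isomorphism $\Phi : \widehat\G \to \G$. The image $\Phi(\widehat D)$ is then a $\Delta^0_2$ copy of $D(\G)$ sitting inside $\G$; but a $\Delta^0_2$ divisible subgroup that is a direct summand, together with the $\Pi^0_2$ set $p^\omega G$, lets one pick out a $\Delta^0_2$ complement, and I would then run the subgroup-construction of Theorem~\ref{nt1} relative to $0'$ to extract a $\Delta^0_2$ (hence $0'$-computable) presentation of $\A$. This already shows $\A$ has a $0'$-computable copy.

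The hard part — and the main obstacle — is upgrading "$0'$-computable copy of $\A$" to "computable copy of $\A$." Here I would use that the Scott family is not just $\Delta^0_2$ but consists of \emph{computable $\Sigma_2$ formulas}, so that for tuples in $\A$ there is a $\Sigma^0_2$ labelling by Scott formulas that, crucially, \emph{distinguishes divisible from non-divisible behaviour on the nose}. The strategy would be to mimic the argument of Theorem~\ref{thm:nrd}: show that if $\A$ had no computable copy but $\G$ had a formally $\Sigma^0_2$ Scott family, one could find a finite-subgroup/back-and-forth argument over $\G$ that effectively reconstructs $\A$. Specifically, since every finite tuple from $\A$ lies in a finite pure subgroup and purity is $\Pi^0_1$ in $\G$, the $\Sigma^0_2$ Scott formulas for tuples in $\A$ can be used to enumerate, at level $0'$, the isomorphism type of larger and larger initial pure subgroups of $\A$; but a $0'$-enumeration of a coherent increasing chain of finite groups whose union is $\A$, together with the extra rigidity supplied by the Scott family being \emph{computable} $\Sigma_2$ (which bounds the search), collapses to a computable enumeration of that chain, yielding a computable copy of $\A$. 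This contradicts the hypothesis. I expect the delicate point to be verifying that the Scott-formula labelling genuinely reduces the $0'$ in the chain-construction to $\emptyset$ — this is exactly where $\lambda(\G) > \omega$ is needed (it guarantees the reduced part is genuinely infinite in a way that makes the chain nontrivial) and where one must check that purity, order, and height data for finite tuples are all captured by finitary existential plus $\Pi^0_1$ pieces of the Scott formulas, so that the only genuine $0'$-oracle use is decidable from the Scott family itself.
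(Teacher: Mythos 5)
Your route (prove the contrapositive by extracting a computable copy of the reduced part from relative $\Delta^0_2$-categoricity) is not the paper's, and it contains two genuine gaps. The first is circular: you propose to build a computable copy of $\G$ in which the divisible part occupies a computable set of coordinates, asserting that this is ``always achievable for the divisible part alone.'' Under the theorem's hypotheses it is not achievable at all: if some computable copy of $\G$ had a computable divisible part, then the quotient by that subgroup would have computable equality and would be a computable presentation of the reduced part $\A$ --- exactly what is assumed not to exist. That $\oplus_{\alpha} Z(p^{\infty})$ has a nice computable presentation in isolation is beside the point; the difficulty is presenting all of $\G$ while keeping the divisible coordinates computable, so this step presupposes the very conclusion you are trying to reach. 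The second gap is the claimed collapse from $0'$ to computable: ``a $0'$-enumeration of a coherent increasing chain of finite groups whose union is $\A$ \dots collapses to a computable enumeration.'' No mechanism is offered, and as a general principle this is false; it is precisely the phenomenon isolated by Khisamiev's analysis, where a reduced $p$-group of length $\omega$ can have a $\Sigma^0_2$ character and $0'$-approximations by finite pure subgroups and still have no computable copy, the missing ingredient being an $s_1$-function (cf.\ Lemma \ref{l3} and Corollary \ref{cor3}); nothing in a formally $\Sigma^0_2$ Scott family for $\G$ manufactures one.

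The paper's proof is entirely different and does not construct any copy of $\A$: it simply reruns the Scott-family refutation of Theorem \ref{thm:nrd}. Assuming a formally $\Sigma^0_2$ Scott family, one takes an element $a$ of the divisible part (nontrivial here, since otherwise the reduced part would be $\G$ itself and hence computable), instantiates the existential witnesses of its computable $\Sigma_2$ Scott formula to obtain a $\Pi^0_1$ condition, and uses the fact that $\Pi^0_1$ formulas are decided by the finite subgroups of $\G$ to transfer satisfaction to a non-divisible element; the hypothesis $\lambda(\G)>\omega$ supplies the infinite period in the reduced part that this transfer needs, and the resulting pair of non-automorphic elements satisfying the same Scott formula is the contradiction. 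The hypothesis that the reduced part has no computable copy serves mainly to mark out the case not already handled by the corollary on groups whose reduced part does have a computable copy. If you wish to salvage your approach, you would have to genuinely construct a computable copy of $\A$ from the Scott family, a substantially harder task than the direct refutation, and one your outline does not supply.
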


\begin{proof} The proof is the same as that of Theorem
  \ref{thm:nrd}. \end{proof}

\section{Open Problems}

The present paper does not completely characterize the relatively
$\Delta^0_2$-categorical or $\Delta^0_2$-categorical Abelian
$p$-groups.  We give below an exhaustive list of the open cases.

\begin{problem} Let $\G$ be a computable Abelian $p$-group isomorphic to
  $\mathcal{D} \oplus \mathcal{H}$, where $\mathcal{D}$ is a direct
  sum of finitely many copies of the Pr\"ufer group, and $\mathcal{H}$
  is reduced, with infinite period but all elements of finite height.
  Can $\G$ be $\Delta^0_2$-categorical? \end{problem}

\begin{problem} Let $\G$ be a computable Abelian $p$-group whose reduced
  part has no computable copy.  We have shown that $\G$ cannot be
  relatively $\Delta^0_2$-categorical.  Can it be
  $\Delta^0_2$-categorical? \end{problem}


\begin{thebibliography}{99}
\bibitem{A} C.J. Ash, Categoricity in hyperarithmetical degrees,\ \emph{%
Annals of Pure and Applied Logic} 34 (1987), pp.\ 1--14.

\bibitem{A-K} C.J. Ash and J.F. Knight, \emph{Computable Structures and the
Hyperarithmetical Hierarchy} (Elsevier, Amsterdam, 2000).

\bibitem{A-K-M-S} C.J. Ash, J. Knight, M. Manasse, and T. Slaman, Generic
copies of countable structures,\ \emph{Annals of Pure and Applied Logic} 42
(1989), pp.\ 195--205.

\bibitem{A-N} C.J. Ash and A. Nerode, Intrinsically recursive relations,\ in 
\emph{Aspects of Effective Algebra}, ed.\ by J.N. Crossley, Upside Down A
Book Co., Steel's Creek, Australia, 1981, pp.\ 26--41.

\bibitem{barker} E.J. Barker, Back and forth relations for reduced
  {A}belian $p$-groups,\ \emph{Annals of Pure and Applied Logic} 75
  (1995), pp.\ 223--249.

\bibitem{C-C-H-M} W. Calvert, D. Cenzer, V.S. Harizanov and A. Morozov,
Effective categoricity of equivalence structures, \emph{Annals of Pure
  and Applied Logic} 141 (2006), pp.\ 61--78. 

\bibitem{idxsets} W. Calvert, V.S. Harizanov, J.F. Knight, and
  S. Miller, Index sets of computable structures, \emph{Algebra and
  Logic} 45 (2006), pp.\ 306--325.

\bibitem{C} J. Chisholm, Effective model theory vs.\ recursive model
theory,\ \emph{Journal Symbolic Logic} 55 (1990), pp.\ 1168--1191.

\bibitem{cfghkm} J. Chisholm, E. Fokina, S.S. Goncharov,
  V.S. Harizanov, J.F. Knight, and S. Quinn, Intrinsic bounds on
  complexity at limit levels, submitted.

\bibitem{D} R.G. Downey, Computability theory and linear orderings,\ in:
Yu.L. Ershov, S.S. Goncharov, A. Nerode, and J.B. Remmel, editors, \emph{%
Handbook of Recursive Mathematics}, vol. 2 (North-Holland, Amsterdam, 1998),
pp. 823--976.

\bibitem{F70} L. Fuchs, \emph{Infinite Abelian Groups} (Academic
  Press, New York, 1970).
 
\bibitem{G1} S.S. Goncharov, Autostability of models and Abelian groups,\ 
\emph{Algebra and Logic} 19 (1980), pp.\ 23--44 (Russian), pp. 13--27
(English translation).

\bibitem{G2} S.S. Goncharov, The quantity of nonautoequivalent
constructivizations,\ \emph{Algebra and Logic} 16 (1977), pp.\ 257--282
(Russian), pp. 169--185 (English translation).

\bibitem{G} S.S. Goncharov, Autostability and computable families of
constructivizations,\ \emph{Algebra and Logic} 14 (1975), pp.\ 647--680
(Russian), pp. 392--409 (English translation).

\bibitem{GD} S.S. Goncharov and V.D. Dzgoev, Autostability of models,\ \emph{%
Algebra and Logic} 19 (1980), pp.\ 45--58 (Russian), pp. 28--37 (English
translation).

\bibitem{G-H-K-M-M-S} S.S. Goncharov, V.S. Harizanov, J.F. Knight, C.F.D.
McCoy, R.G. Miller, and R. Solomon, Enumerations in computable structure
theory,\ \emph{Annals of Pure and Applied Logic} 136 (2005), pp. 219--246.

\bibitem{G-H-K-S} S.S. Goncharov, V.S. Harizanov, J.F. Knight, and R.A.
Shore, $\Pi _{1}^{1}$ relations and paths through $\mathcal{O}$,\ \emph{%
Journal of Symbolic Logic} 69 (2004), pp. 585--611.

\bibitem{G-L-S} S. Goncharov, S. Lempp, and R. Solomon, The computable
dimension of ordered Abelian groups,\ \emph{Advances in Mathematics} 175
(2003), pp. 102--143.

\bibitem{khis92} N.G. Khisamiev, Constructive Abelian $p$-groups, \emph{%
Siberian Advances in Mathematics} 2 (1992), pp. 68--113 (English
translation).

\bibitem{Kh98} N.G. Khisamiev, Constructive Abelian groups,\ in: Yu.L. Ershov,
S.S. Goncharov, A. Nerode, and J.B. Remmel, editors, \emph{Handbook of
Recursive Mathematics}, vol. 2 (North-Holland, Amsterdam, 1998), pp.
1177--1231.

\bibitem{KNS97} B. Khoussainov, A. Nies, and R.A. Shore Computable models of
theories with few models,\ \emph{Notre Dame Journal of Formal Logic} 38
(1997), pp.\ 165--178.

\bibitem{Ku} O.V. Kudinov, An autostable $1$-decidable model without a
computable Scott family of $\exists $-formulas,\ \emph{Algebra and Logic} 35
(1996), pp.\ 458--467 (Russian), pp. 255--260 (English translation).

\bibitem{K1} O.V. Kudinov, A description of autostable models,\ \emph{%
Algebra and Logic} 36 (1997), pp.\ 26--36 (Russian), pp. 16--22 (English
translation).

\bibitem{L} P. LaRoche, Recursively presented Boolean algebras,\ \emph{%
Notices AMS} 24 (1977), A552--A553.

\bibitem{L-M-M-S} S. Lempp, C. McCoy, R. Miller, and R. Solomon, Computable
categoricity of trees of finite height,\ \emph{Journal of Symbolic Logic} 70
(2005), pp. 151--215.

\bibitem{McCoy} C.F.D. McCoy, $\Delta _{2}^{0}$-categoricity in Boolean
algebras and linear orderings,\ \emph{Annals of Pure and Applied Logic} 119
(2003), pp.\ 85--120.

\bibitem{MN} G. Metakides and A. Nerode, Effective content of field theory,\ 
\emph{Annals of Mathematical Logic} 17 (1979), pp.\ 289--320.

\bibitem{Miller} R. Miller, The computable dimension of trees of infinite
height,\ \emph{Journal of Symbolic Logic} 70 (2005), pp.\ 111--141.

\bibitem{N} A.T. Nurtazin, Strong and weak constructivizations and
computable families,\ \emph{Algebra and Logic} 13 (1974), pp.\ 311--323
(Russian), pp. 177--184 (English translation).

\bibitem{R} J.B. Remmel, Recursively categorical linear orderings,\ \emph{%
Proceedings of the American Mathematical Society}$\mathbb{\ }$83 (1981), pp.
387--391.

\bibitem{RBA} J.B. Remmel, Recursive isomorphism types of recursive Boolean
algebras,\ \emph{Journal of Symbolic Logic}$\mathbb{\ }$46 (1981), pp.
572--594.

\bibitem{S} V.L. Selivanov, Enumerations of families of general recursive
functions,\ \emph{Algebra and Logic} 15 (1976), pp.\ 205--226 (Russian),
pp.\ 128--141 (English translation).

\bibitem{Smi81} R.L. Smith, Two theorems on autostability in $p$-groups,\ in 
\emph{Logic Year }1979--80, University of Connecticut, Storrs, Lecture Notes
in Mathematics 859, Springer, Berlin (1981), pp. 302--311.

\bibitem{Soa87} R.I. Soare, \emph{Recursively Enumerable Sets and Degrees. A
Study of Computable Functions and Computably Generated Sets}
(Springer-Verlag, Berlin, 1987).
\end{thebibliography}
\end{document}